\newtheorem{rmk}{Remark}[section]
\newtheorem{thm}{Theorem}[section]
\newtheorem{lem}[thm]{Lemma}
\newtheorem{cor}[thm]{Corollary}
\newtheorem{prop}[thm]{Proposition}
\newtheorem{Question}[thm]{Question}
\newtheorem{defn}{Definition}[section]
\numberwithin{equation}{section}
\def\Pb{\ifmmode{\Bbb P}\else{$\Bbb P$}\fi}
\def\Z{\ifmmode{\Bbb Z}\else{$\Bbb Z$}\fi}
\def\C{\ifmmode{\Bbb C}\else{$\Bbb C$}\fi}
\def\R{\ifmmode{\Bbb R}\else{$\Bbb R$}\fi}
\def\S{\ifmmode{S^2}\else{$S^2$}\fi}
\providecommand{\customgenericname}{}
\newcommand{\newcustomtheorem}[2]{%
  \newenvironment{#1}[1]
  {%
   \renewcommand\customgenericname{#2}%
   \renewcommand\theinnercustomgeneric{##1}%
   \innercustomgeneric
  }
  {\endinnercustomgeneric}
}
\def\diam{\operatorname{diam}}
\def\Area{\operatorname{Area}}
\def\S{\cal S}
\begin{document}

\title[Nonconvex Surfaces]{Nonconvex Surfaces which Flow to Round Points}
\author{Alexander Mramor and Alec Payne}
\address{Department of Mathematics, University of California Irvine, Irvine, CA 92617}
\address{Courant Institute, New York University, New York City, NY 10012}
\email{mramora@uci.edu, ajp697@nyu.edu}
%\date{}
%\keywords{}
\begin{abstract} In this article, we extend Huisken's theorem that convex surfaces flow to round points by mean curvature flow. We construct certain classes of mean convex and non-mean convex hypersurfaces that shrink to round points and use these constructions to create pathological examples of flows. We find a sequence of flows that exist on a uniform time interval, have uniformly bounded diameter, and shrink to round points, yet the sequence of initial surfaces has no subsequence converging in the Gromov-Hausdorff sense. Moreover, we find a sequence of flows which all shrink to round points, yet the initial surfaces converge to a space-filling surface. Also constructed are surfaces of arbitrarily large area which are close in Hausdorff distance to the round sphere yet shrink to round points.
\end{abstract}
\maketitle
\section{Introduction}
In his foundational paper~\cite{Huisken84}, Huisken showed that the mean curvature flow of a convex surface shrinks smoothly to a point and approaches a round sphere after rescaling. In other words, it flows to a round point. On the other hand, Angenent \cite{Angenent92} and Topping~\cite{Topping98} independently showed that neckpinch singularities occur quite generally, meaning that a singularity occurs before the flow contracts to a point. In fact, Angenent's self-shrinking torus constructed in that same paper shows that hypersurfaces that flow to points need not become round at the singular time. Despite the possibility of neckpinch singularities and the possibility of a flow disappearing in a non-round point, it remains a natural question to ask what classes of nonconvex surfaces shrink to round points.
$\medskip$

Progress has been made towards extending Huisken's theorem in terms of curvature pinching conditions, including in higher codimension and in non-Euclidean target spaces---see the works of Andrews-Baker~\cite{AndrewsBaker10}, Liu-Xu-Ye-Zhao~\cite{LiuXuYeZhao18}, and Lei-Xu~\cite{LeiXu, LeiXu}. A result of Lin \cite{LinStarShaped20} gives that surfaces which have very small $L^2$-norm of tracefree second fundamental form will shrink to round points under the flow. In this paper, we construct classes of surfaces that have very large tracefree second fundamental form, including in the $L^2$ sense. From another perspective, Colding-Minicozzi showed that a surface which shrinks to a compact point does so generically to a round point, in the sense of their pieceise mean curvature flow~\cite{ColdingMinicozziGeneric}. Later, Bernstein and L. Wang showed that any surface in $\mathbb{R}^3$ with entropy less than that of the cylinder flows to a round point (see \cite[Corollary 1.2]{BernsteinWang17}). Note that the papers of Lin-Sesum and Colding-Minicozzi make no assumptions on the pointwise geometry of the surfaces involved.
\medskip
\vspace{-.15in}

Our first theorem, Theorem \ref{first theorem}, is an extension of Huisken's theorem to certain nonconvex tubular neighborhoods of curve segments. This is a precursor to the method used to show the next result. We will do this by constructing appropriate inner and outer barriers which, while they will not be mean curvature flows outright, will be subsolutions and supersolutions to the flow. In this theorem and the following, $n \geq 2$ and $A$ denotes the second fundamental form. Also, all hypersurfaces will be smooth unless mentioned otherwise.

\begin{thm}\label{first theorem} Let $\Sigma = \Sigma(n, L)$ denote the space of embedded intervals in $\mathbb{R}^{n+1}$ with length bounded by $L$. Let $\gamma\in \Sigma(n,L)$ be a curve, and let $T_r(\gamma)$ be the topological boundary of the solid tubular neighborhood of $\gamma$, using the Euclidean distance in $\mathbb{R}^{n+1}$. Then for every $L > 0$, there exists $C>0$ with the following significance: For each $\gamma \in \Sigma(n,L)$ with $|A|, |\nabla A| \leq C$, there exists $0<r<L/10$ such that for all $\epsilon \ll 1$, there exists a smooth embedded hypersurface which is $\epsilon$-close in $C^1$ to $T_r(\gamma)$ and which shrinks by mean curvature flow to a round point in finite time. Moreover, there is a lower bound $\overline{C} < C$ which depends only on $n$, $L$, and $\epsilon_0$, where $\epsilon_0$ is the constant from the Brakke regularity theorem  (see Section 2, Theorem \ref{Brakke}). 
\end{thm} 

In Theorem \ref{first theorem}, $\epsilon$ and $r$ are chosen small enough depending on $\gamma$. Also, note that the curvature bound $C$ is chosen small depending on $L$, meaning that the tubular neighborhood of each admissible $\gamma$ is embedded. Also, if one could concretely estimate the constant $\epsilon_0$, then one could in theory explicitly compute $\overline{C}$ in this theorem.  Building off of this, we will show how one can add ``spikes'' to the examples given by Theorem \ref{first theorem} to give new examples of non-mean convex, arbitrarily high curvature hypersurfaces which shrink to round points. By ``spikes,'' we mean a perturbation of a thin tubular neighborhood of a curve segment with one endpoint attached orthogonally to a surface. 
$\medskip$

Before we state the next theorem, we must first explain some notation. Let $\overline{\Sigma}$ be the set of smooth hypersurfaces which shrink to round points, endowed with the $C^2$ topology. The set $\overline{\Sigma}$ contains $C^2$ small perturbations of strictly convex surfaces, as well as $C^2$ small perturbations of the surfaces in Theorem \ref{first theorem}. Indeed, $\overline{\Sigma}$ is open in the $C^2$ topology. That is, for a smooth surface $M$ which flows to a round point, there exists $\delta>0$ such that for all perturbations $M^*$ of $M$ that are $\delta$-close in $C^2$ to $M$, $M^*$ flows to a round pount. This follows from continuous dependence of the flow under smooth perturbations, combined with derivative estimates for the flow. See the appendix, Theorem \ref{appendix theorem 1}. Also, in the following theorem, we use a slightly different definition of $T_r(\gamma)$ than in Theorem \ref{first theorem}, in order to have more concision with the notation. We will make our notation clear throughout our arguments.

\begin{thm}\label{second theorem} Let $M$ be a hypersurface  in the set $\overline{\Sigma}$ defined above. Fix $N \gg 1$. Then, for any $p \in M$ and any $L>0$, there exists $0<r\ll \epsilon \ll L$ such that for any straight line segment $\gamma$ orthogonal to $T_p M$ with an endpoint at $p$, there exists a closed (possibly immersed) hypersurface $\widetilde{M}$ with the following properties:
\begin{enumerate}
    \item The flow $\widetilde{M}_t$ shrinks to a round point, i.e.\ $\widetilde{M} \in \overline{\Sigma}$,
    \item $\widetilde{M} \cap T_{r}(\gamma)$ is given by a graph over $T_p M \cap B(p,\epsilon)$ and the mean curvature of $\widetilde{M}\cap T_{\epsilon}(\gamma)$ has a sign\footnote{By $\widetilde{M} \cap T_{\epsilon}(\gamma)$, we mean the connected component (of the preimage of the natural immersion defining $\widetilde{M}$) of $\widetilde{M} \cap T_{\epsilon}(\gamma)$ containing the glued-in $\partial T_r(\gamma)$. This accounts for the possibility that there are other parts of $\widetilde{M}$, as subsets of Euclidean space, which intersect $T_{\epsilon}(\gamma)$ and are not part of the ``spike'' we construct.}
    \item $\widetilde{M} = M\cup \partial T_r(\gamma)$ outside the ball $B(p,\epsilon)$,\footnote{See the above footnote. By this, we mean that $\widetilde{M}$ will coincide with $M \cup \partial T_r(\gamma)$ as subsets.}
\end{enumerate}

where $T_{\epsilon}(\gamma)$ is the solid tubular neighborhood of radius $\epsilon$ around $\gamma$ and $\partial T_r(\gamma)$ denotes a smooth surface that is $\frac{r}{N}$-close in $C^1$ to the boundary of the tubular $r$-neighborhood of $\gamma$.

Furthermore, we may iterate this construction by starting with $\widetilde{M} \in \overline{\Sigma}$, as opposed to $M$, and applying the above procedure to some other choice of $p' \in \widetilde{M}$ and $L'>0$.  
\end{thm}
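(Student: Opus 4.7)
The plan is to construct $\widetilde{M}$ by an explicit smooth surgery that attaches a thin sausage-like spike to $M$ at $p$, and then to verify $\widetilde{M}\in\overline{\Sigma}$ by running mean curvature flow for a short time $t_0\sim r^2$ and invoking the openness of $\overline{\Sigma}$ in the $C^2$ topology already noted in the introduction.

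\medskip

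For the construction, start with the boundary $\partial T_r(\gamma)$; since $\gamma$ is a straight segment, this is a convex spherocylinder. Modify this closed surface inside $B(p,\epsilon)$ so that its cap near $p$ is opened up and attached smoothly to $M$, using a cutoff across the annulus $B(p,\epsilon)\setminus B(p,\epsilon/2)$. Because $\gamma$ is orthogonal to $T_pM$ and $r\ll\epsilon$, one can arrange the interpolation so that $\widetilde{M}$ is smooth and agrees with $M\cup\partial T^\epsilon_r(\gamma)$ outside $B(p,\epsilon)$, so that $\widetilde{M}\cap T_\epsilon(\gamma)$ is graphical over $T_pM\cap B(p,\epsilon)$ with monotone height function, and so that the mean curvature has a definite sign in the interpolation region. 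The perturbation of $\partial T_r(\gamma)$ produced by this modification is supported in $B(p,\epsilon)$, and is $r/N$-close to $\partial T_r(\gamma)$ in $C^2$ provided $\epsilon/r$ is chosen sufficiently large relative to $N$.

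\medskip

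For the dynamics I would use a two-sided barrier comparison in the spirit of Theorem~\ref{first theorem}. The inner barrier is a slightly thinner spherocylinder strictly inside the spike; by Huisken's theorem \cite{H1} this convex barrier shrinks to a round point in time $O(r^2)$, and by avoidance it forces the spike portion of $\widetilde{M}_t$ to contract. An outer confinement is obtained from a slight outward normal push of $M$ (still in $\overline{\Sigma}$ by openness) together with a slightly fattened spherocylinder attached by the same recipe. Pseudolocality of mean curvature flow outside $B(p,2\epsilon)$, where $\widetilde{M}_0$ and $M_0$ coincide, keeps $\widetilde{M}_t$ close to $M_t$ there, while Brakke--White regularity (Theorem~\ref{Brakke}) applied on the spike, where the initial data is $r/N$-close in $C^2$ to the round tube, gives smoothness and curvature bounds on the spike region. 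Together these show that the flow of $\widetilde{M}$ is smooth on $[0,t_0]$ with $t_0=cr^2$, and that the spike has contracted smoothly into the bulk of $M$ by time $t_0$.

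\medskip

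The main obstacle, and crux of the argument, is then to upgrade these estimates to $C^2$-closeness of $\widetilde{M}_{t_0}$ to $M_{t_0}$. Away from $B(p,2\epsilon)$ this follows from pseudolocality together with Ecker--Huisken-type interior curvature estimates, since the initial data agrees in that region. Inside $B(p,2\epsilon)$, the two-sided barrier confinement combined with local parabolic regularity traps $\widetilde{M}_{t_0}$ in an arbitrarily small $C^2$-neighborhood of $M_{t_0}$, provided $r$ is chosen sufficiently small compared to $\epsilon$. Once this closeness is in hand the proof concludes: since $M\in\overline{\Sigma}$ gives $M_{t_0}\in\overline{\Sigma}$, the openness of $\overline{\Sigma}$ in $C^2$ yields $\widetilde{M}_{t_0}\in\overline{\Sigma}$, and therefore $\widetilde{M}\in\overline{\Sigma}$. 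The iteration claim is then immediate, as the same construction applies with $\widetilde{M}$ in place of $M$ and new choices $p', L'$, taking the new scale parameters $r',\epsilon'$ small enough to avoid interference with previously attached spikes.
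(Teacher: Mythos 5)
Your high-level outline (explicit surgery near $p$, short-time flow, then openness of $\overline{\Sigma}$ in $C^2$) matches the paper's skeleton, and the endgame via pseudolocality away from $B(p,2\epssilon)$ plus openness is the right move. But the heart of the matter --- \emph{why} the spike collapses and why this gives $C^2$ closeness --- is exactly where your argument fails.

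Your inner barrier is a thin convex spherocylinder inside the spike, which shrinks to a round point by Huisken. That gives no useful information: avoidance with an \emph{interior} object only prevents $\widetilde{M}_t$ from penetrating the spherocylinder, and once the spherocylinder vanishes there is no constraint at all. Nothing in this forces the spike tip to descend, and indeed the obvious danger --- a neckpinch where the spike separates before collapsing --- is precisely what must be ruled out. Your proposed outer barrier ("fattened spherocylinder attached by the same recipe") is again a spiked surface and therefore does not sweep through the spike region in time $O(r^2)$ either; it has the same unknown dynamics as $\widetilde{M}$ itself. In other words, you have not exhibited any mechanism by which the spike contracts toward $M$. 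The paper's Lemma~\ref{nearly graphical lemma} supplies exactly this missing mechanism: place a translating bowl soliton over the tip, use rotational symmetry and the Sturmian theory of Angenent--Altschuler--Giga to control the intersection number of profile curves, and conclude that the tip comes down at least at the soliton speed (which is $\sim 1/r$, giving time $O(rL)$). This is not an avoidance argument from inside; it is a genuine outer barrier with the correct translating geometry, and it is the essential new ingredient compared to Theorem~\ref{first theorem}.

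Your use of Brakke--White regularity directly ``on the spike, where the initial data is $r/N$-close in $C^2$ to the round tube'' is also incorrect: the Gaussian density ratio of a thin tubular neighborhood of a segment, at scales comparable to or larger than $r$, is close to the cylinder density $\sqrt{2\pi/e}$, not to $1$, so the small-density hypothesis of Theorem~\ref{Brakke} fails there. The paper only invokes Brakke regularity (through Proposition~\ref{smooth} and one-sided minimization, applied to the shifted and flared barriers $I_t$, $O_t$) \emph{after} the soliton argument has shown that the surface is $C^0$-close to a plane near $p$; only then are the density ratios controllable. Finally, the $C^0$-to-$C^2$ upgrade you call ``local parabolic regularity'' is not automatic either: the paper gets it from the sign of the mean curvature on the tip (so $H$ controls $|A|^2$ via two-convexity, Proposition 2.7 of \cite{HS2}) together with the fundamental theorem of calculus bound on the tip speed, or alternatively from the interpolation at the end of Proposition~\ref{smooth}. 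These are the steps you would need to add for the argument to close.
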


\begin{figure}
\centering
\includegraphics[scale = .4]{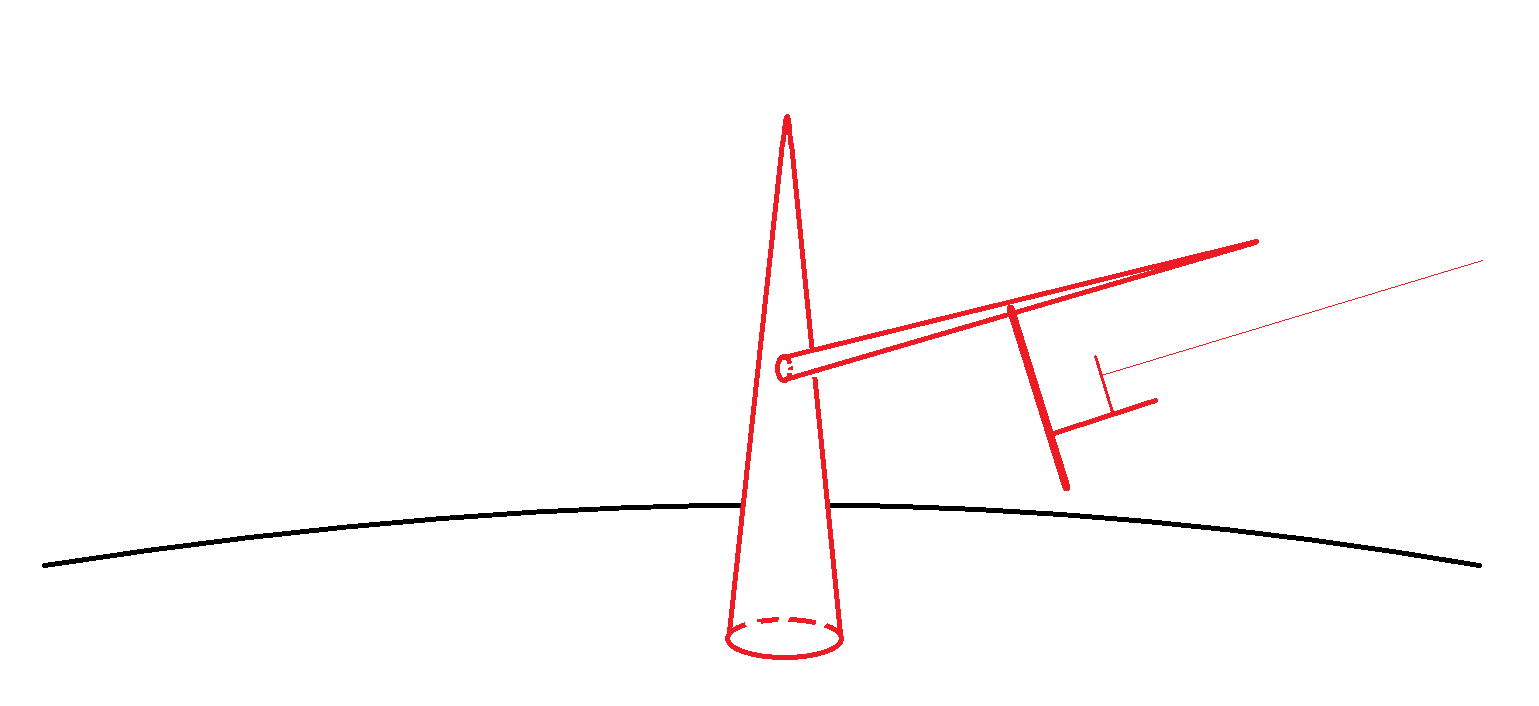}
\caption{We may iterate Theorem \ref{second theorem} to find complicated non-mean convex surfaces which shrink to round points.}
\label{iteration figure}
\end{figure}

%(Note that in this statement we use a slightly defintion of $\partial T_r$ than it as the boundary of a radius $r$ tube.) 
Compare with \cite{Mramor} concerning the level set flow, where criterion for level set flows possibly initially far from planes to smoothen are given. The parameter $N$ gives extra flexibility of the construction and allows for further applications when the construction is iterated. This will become apparent in Section 5, where it will be used in the proofs of Corollaries \ref{no GH limit with area unbounded} and \ref{highentropy}.
$\medskip$

Note that we may take $\gamma$ in Theorem \ref{second theorem} to be slightly bent, in analogy with Theorem \ref{first theorem}. That is, for each $L$ and $p$ in the above theorem, we may find a $C$ small enough such that the above theorem holds for $\gamma$ with $|A|^2<C$. Also, the scales $\epsilon$, $r$ at which the ``spikes'' are added may be very small compared to the initial hypersurface.
$\medskip$

We also emphasize that we may construct the spikes in Theorem \ref{second theorem} so that they are either inward-pointing or outward-pointing, i.e. that $\gamma$ merely needs to be orthogonal to $T_p M$ yet this can be either coincident with an inward-pointing normal vector or an outward-pointing normal vector. This means that this theorem gives examples of non-mean convex surfaces which shrink to round points. 
$\medskip$

Our next construction is a natural generalization of Theorem \ref{second theorem} and will be used to construct high area examples. The idea is to add several thin ancient pancakes, which will all shrink in quickly like a spike yet will each contribute a definite amount of area to the surface. By ``ancient pancakes,'' we mean the convex, $O(n)\times O(1)$-invariant ancient solution constructed by Wang and later in more detail by Bourni-Langford-Tinaglia~\cite{Wa11, BLT1}.
$\medskip$

Let $M$ be a rotationally symmetric smooth closed hypersurface in $\overline{\Sigma}$, i.e.\ $M_t$ flows to a round point. By rotationally symmetric, we mean that $M$ can be written as the rotation of a graph $f$ about the $x_{n+1}$-axis. Fixing the interval $[a,b]\subset \mathbb{R}$, we say that $M$ is $(\delta,c)$-cylindrical over $[a,b]$ if $M$ is $\delta$-close in $C^2$ to a segment of the standard cylinder $S^{n-1} \times \mathbb{R}$ of radius $c$ on the interval $[a,b]$, i.e.\ if $f$ is close to the constant function $c$ over $[a,b]$.

\begin{thm}\label{third theorem}  Let $M$ be a smooth closed hypersurface in $\overline{\Sigma}$ which is rotationally symmetric, i.e. it can be written as the rotation of a graph $f$ about the $x_{n+1}$-axis. Let $p \in (a,b)$ and $L>0$. If $M$ is $(\delta,c)$-cylindrical over $[a,b]$ for $\delta \ll c$, then there exists $r \ll \epsilon \ll 1$, a closed hypersurface $\widetilde{M}$, and a smooth positive graph $\widetilde{f}:[a,b] \to \mathbb{R}_{\geq 0}$ with the following properties:
\begin{enumerate}
    \item $\widetilde{M}_t$ flows to a round point, i.e.\ $\widetilde{M} \in \overline{\Sigma}$,
    \item $\widetilde{M}$ is rotationally symmetric and can be represented by the rotation of $\widetilde{f}$ around the $x_{n+1}$-axis,
    \item $\widetilde{f}\geq f$ and $f= \widetilde{f}$ outside $[p-\epsilon, p+\epsilon]$,
    \item $\widetilde{f}(p) = f(p)+L$,
    \item Outside the ball $B(f(p), \epsilon)$, $\widetilde{f}$ is $\frac{r}{100}$-close in $C^1$ to the boundary of the $r$-neighborhood of the set $\{t \in [0,L] \,|\,(p,f(p)+t(L-r))\}$.
\end{enumerate}
\end{thm}

Now we state several corollaries of Theorems \ref{second theorem} and \ref{third theorem} which will be proven in Section 6. The first corollary, a consequence of Theorem \ref{second theorem}, will demonstrate how badly compactness of mean curvature flows can fail without a uniform bound on the second fundamental form. The following corollary is summarized in Figure \ref{spiky ball figure}.

\begin{cor}\label{no GH limit corollary}
There exists a sequence of closed hypersurfaces $M^i\subset \mathbb{R}^{n+1}$, such that $\diam(M^i)$ and $\Area(M^i)$ are uniformly bounded and each flow $M^i_t$ exists on a uniform time interval $[0,T]$ and shrinks to a round point, yet $M^i$ has no subsequence which converges in the Gromov-Hausdorff sense.
\end{cor}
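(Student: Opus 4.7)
\emph{Proof plan.} Fix a round sphere $M = \partial B(0, R_0) \subset \R^{n+1}$ and a length $L > 0$. For each $i \geq 1$, choose $N_i \to \infty$ distinct points $p_1^{(i)}, \ldots, p_{N_i}^{(i)}$ on $M$, and apply Theorem~\ref{second theorem} iteratively $N_i$ times starting from $M$: at the $j$-th step, attach an outward-pointing spike at $p_j^{(i)}$ along the outward radial direction of length $L$, with spike radius $r_j^{(i)}$ chosen small enough that successive spikes do not interact and $\sum_{j=1}^{N_i} (r_j^{(i)})^{n-1} \leq 1$. Denote the resulting closed hypersurface by $M^i$. The iteration clause of Theorem~\ref{second theorem} yields $M^i \in \overline{\Sigma}$, so $M^i_t$ shrinks to a round point.

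I would then verify the three uniform bounds. For the diameter, $M^i$ is contained in $B(0, R_0 + L + o(1))$, so $\diam(M^i) \leq 2R_0 + 2L + o(1)$. For the area, each spike boundary contributes at most $C_n (r_j^{(i)})^{n-1} L$ to the total, so by our choice of radii $\Area(M^i) \leq \Area(M) + C_n L$. For the uniform lower bound on the existence time, note that since the spikes point outward, $M^i$ encloses the original sphere $M$ in $\R^{n+1}$; by the avoidance principle, the region enclosed by $M^i_t$ contains the shrinking sphere $M_t$ for all $t$, so the extinction time of $M^i_t$ is bounded below by the extinction time $R_0^2/(2n)$ of $M_t$, and any $T < R_0^2/(2n)$ works.

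For the failure of Gromov--Hausdorff precompactness, equip each $M^i$ with its intrinsic length metric $d_i$ and let $q_j^{(i)}$ denote the tip of the $j$-th spike. By the explicit local structure of $M^i$ from Theorem~\ref{second theorem}, any path in $M^i$ from $q_j^{(i)}$ to a point outside the small ball $B(p_j^{(i)}, \epsilon)$ containing the spike base has length at least $L - o(1)$. Hence for $j \neq k$, every path from $q_j^{(i)}$ to $q_k^{(i)}$ must exit spike $j$ and enter spike $k$, giving $d_i(q_j^{(i)}, q_k^{(i)}) \geq 2L - o(1)$. For any $\epsilon_0 \in (0, L)$, each intrinsic $\epsilon_0$-ball in $M^i$ contains at most one of the $N_i$ tips, so $M^i$ requires at least $N_i$ balls of radius $\epsilon_0$ to be covered. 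Since $N_i \to \infty$, the family $\{(M^i, d_i)\}$ is not uniformly totally bounded, and thus no subsequence converges in the Gromov--Hausdorff sense by Gromov's compactness theorem.

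The main technical point is to verify the intrinsic separation $d_i(q_j^{(i)}, q_k^{(i)}) \geq 2L - o(1)$ carefully. Because Theorem~\ref{second theorem} guarantees that $M^i$ agrees with the union of $M$ and the standard tubular neighborhood boundaries $\partial T^\epsilon_r(\gamma_j)$ outside the small base balls $B(p_j^{(i)}, \epsilon)$ (each $\partial T^\epsilon_r(\gamma_j)$ being a small $C^2$-perturbation of the cylinder-with-cap of length $L$), any shortcut available within a perturbed base region can shave at most $O(\epsilon)$ off a path, which is absorbed into the $o(1)$ since $\epsilon \ll L$.
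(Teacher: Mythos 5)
Your proposal is correct and takes essentially the same approach as the paper: build $M^i$ by attaching more and more outward-pointing spikes via Theorem~\ref{second theorem}, use containment of a fixed sphere for the uniform existence time, and use the intrinsic separation of the spike tips to defeat uniform total boundedness and hence Gromov--Hausdorff precompactness. The only cosmetic difference is that you construct each $M^i$ afresh with $N_i$ spikes rather than building $M^{i+1}$ iteratively from $M^i$ by adding one spike at a time; you also make the area bound explicit, which the paper leaves implicit.
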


We may also find an example of a sequence that has the same properties as the sequence in Corollary \ref{no GH limit corollary} yet has unbounded area. The following two corollaries, consequences of Theorem \ref{third theorem}, are summarized in Figure \ref{entropy example}.

\begin{cor}\label{no GH limit with area unbounded}
There exists a sequence of closed hypersurfaces $M^i\subset \mathbb{R}^{n+1}$, such that $\diam(M^i)$ is uniformly bounded and each flow $M^i_t$ exists on a uniform time interval $[0,T]$ and shrinks to a round point, yet $\Area(M^i) \to \infty$ and $M^i$ has no subsequence which converges in the Gromov-Hausdorff sense. Moreover, for each $t$, there is $C(t)$ such that $\Area(M^i_t)<C(t)$.
\end{cor}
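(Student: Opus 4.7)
The plan is to iteratively apply Theorem \ref{third theorem} to add many fixed-height ``pancakes'' to a rotationally symmetric base surface, producing a sequence of diverging initial area but uniformly bounded diameter whose intrinsic covering numbers blow up. Start with a rotationally symmetric $M^0 \in \overline{\Sigma}$ possessing a long cylindrical region $[a,b]$ of radius $c$---for instance, the tubular neighborhood of a short straight segment provided by Theorem \ref{first theorem}, whose central portion is cylindrical. Fix a pancake height $L>0$. Choose base points $p_i \in (a,b)$ and scales $r_i \ll \epsilon_i \to 0$, sufficiently sparse and small that the added pancakes are disjoint and leave, at each stage, an interval of cylindrical structure to attach the next. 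Iteratively apply Theorem \ref{third theorem} to $M^{i-1}$ at $p_i$ with height $L$ and scales $(\epsilon_i,r_i)$; since the output is again rotationally symmetric and $(\delta,c)$-cylindrical away from the prior perturbations, the iteration is well-defined and every $M^i \in \overline{\Sigma}$.

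Three properties need verification. (i) Each pancake extends at most $L$ radially beyond the original cylinder, so $\diam(M^i)\leq \diam(M^0)+2L$ uniformly. (ii) A direct area computation for the rotated graph shows each pancake contributes area of order $\omega_{n-1}\bigl((c+L)^n-c^n\bigr)/n$, essentially independent of $\epsilon_i,r_i$, whence $\Area(M^i)\to\infty$. (iii) For the uniform extinction time $T>0$ and the pointwise area bound at each $t>0$, impose very rapid decrease on the scales: first, $(\epsilon_i,r_i)$ small enough that inserting pancake $i$ perturbs the extinction time of the previous flow by at most $2^{-i}$ (using continuity of MCF under small localized perturbations of the initial data), so $T_i\geq T_0-1=:T$; second, $r_i\leq 2^{-i}$, so each thin pancake mimics the shrinking-cylinder solution and collapses in time of order $r_i^2$. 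Then at any $t>0$, only those pancakes with $r_i^2\gtrsim t$, of number $k(t)$ depending on $t$ alone, still contribute, giving $\Area(M^i_t)\leq \Area(M^0_t)+O\bigl(k(t)\cdot L^n\bigr) =: C(t)$.

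For the absence of a Gromov--Hausdorff convergent subsequence, let $q_j^i\in M^i$ (for $j=1,\ldots,i$) be a ``tip'' point on the $j$-th pancake, e.g., a point in the rotation-orbit of the highest point of its graph. Any path in $M^i$ between distinct $q_j^i$ and $q_k^i$ must descend one pancake, cross the cylindrical region, and ascend the other, so their intrinsic distance is at least $\approx 2L$. Hence $M^i$ contains $i$ points pairwise $L$-separated in the intrinsic metric, so its $(L/4)$-covering number is at least $i$. By Gromov's precompactness theorem, any GH-precompact sequence of compact metric spaces has uniformly bounded $\epsilon$-covering numbers for each fixed $\epsilon>0$; this fails here, so $\{M^i\}$ admits no convergent subsequence in GH. The main obstacle is the joint scale-choice ensuring simultaneously that each $M^i$ lies in $\overline{\Sigma}$ (so that Theorem \ref{third theorem} can be reapplied), that the extinction times are uniformly bounded below (which requires a quantitative continuity statement for MCF under iterated localized perturbations of arbitrarily many scales), and that only finitely many surviving pancakes remain at each positive $t$; verifying these three conditions simultaneously at every step is the technical heart of the argument.
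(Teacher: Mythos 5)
Your proposal is correct in substance but follows a genuinely different construction from the paper. Where you place an array of pancakes of height $L$ at distinct points $p_i$ along a single fixed cylindrical region of $M^0$ (with radial widths $\epsilon_i \to 0$), the paper instead iterates Theorems \ref{second theorem} and \ref{third theorem} in tandem: it builds a tower of rotationally symmetric spikes of lengths $\tfrac{1}{10^i}$, each attached at the tip of the previous one, and attaches one pancake of height $1$ halfway up each new spike. Both routes hinge on Theorem \ref{third theorem} and on the fact that the pancake's flattening time can be forced to zero by taking $r_i$ small, and both give diverging area while summing the added diameters; your covering-number argument via Gromov precompactness is the natural formalization of the packing argument the paper invokes in Corollary \ref{no GH limit corollary} and leaves implicit here. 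One place where you work harder than necessary: you obtain the uniform lower bound on extinction time by imposing $2^{-i}$ perturbation bounds on the iterated flows, requiring a quantitative continuity statement over arbitrarily many scales. The paper simply notes that each $M^i$ contains a fixed round sphere in its interior, so by the avoidance principle its extinction time is at least that of the enclosed sphere. Since in your construction $\widetilde f\geq f$ at every stage, the same enclosure holds ($M^i$ encloses $M^0$ for all $i$), and you can replace the iterated-continuity estimate by this one-line comparison. The ``collapses in time of order $r_i^2$'' heuristic is fine as a proxy for ``flattening time tends to $0$ with $r_i$,'' which is the qualitative input Theorem \ref{third theorem}'s proof provides, but the precise power is not what the theorem gives you and should not be relied on.
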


\begin{figure}
\centering
\includegraphics[scale = .45]{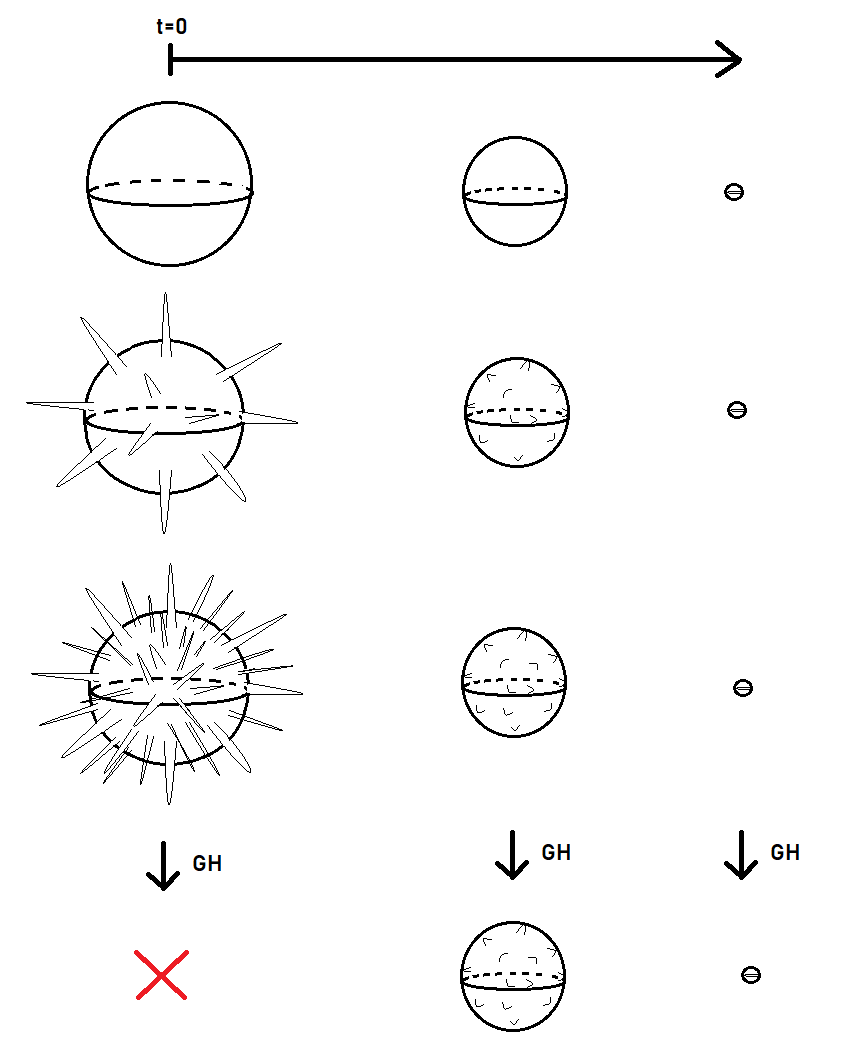}
\caption{A sequence of flows shrinking to a round point yet the sequence of initial surfaces has no Gromov-Hausdorff limit.}
\label{spiky ball figure}
\end{figure}

\begin{figure}
\centering
\includegraphics[scale = .45]{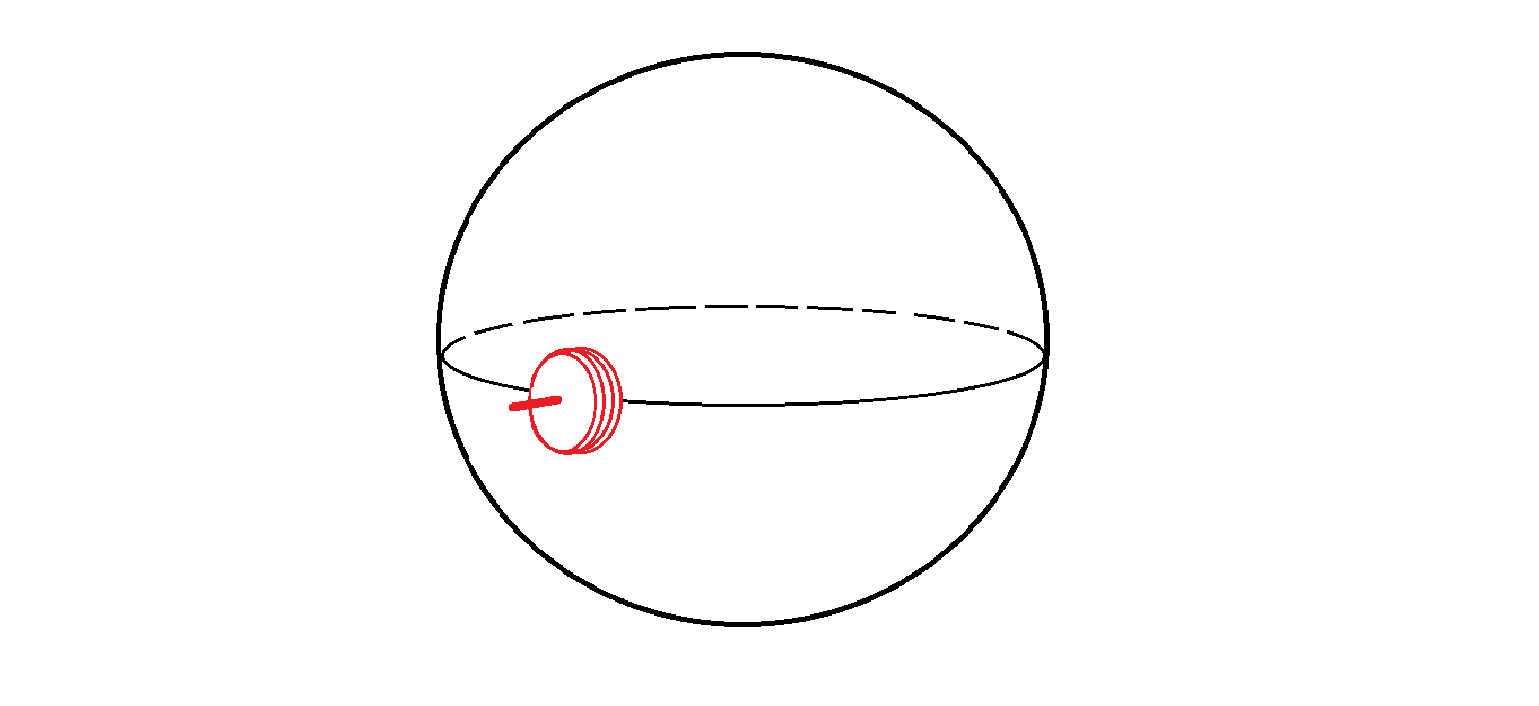}
\caption{A surface that is close in Hausdorff distance to a sphere and shrinks to a round point yet has high entropy and large area.}
\label{entropy example}
\end{figure}

Since we can construct surfaces which will have arbitrarily high area in a compact region, we can find examples of arbitrarily high entropy surfaces, in the sense of Colding-Minicozzi~\cite{ColdingMinicozziGeneric}, which smoothly flow to round points. On the other hand, Bernstein and L. Wang's landmark theorem (\cite{BernsteinWangHausdorffstability18}, see also the generalization by S. Wang in \cite{shengwenwang2020round}) says that closed surfaces in $\mathbb{R}^3$ of low entropy are close in Hausdorff distance to the round sphere. Here we show that the converse to their theorem is wildly false even if one assumes the surface flows smoothly to a round point. That is, we will construct surfaces that are arbitrarily Hausdorff close to the round sphere and flow to round points, yet have arbitrarily large entropy. To do this, we modify the construction in the above corollary to be as close as we want in Hausdorff distance to a round sphere and have arbitrarily large entropy despite flowing to a round point. This is the content of the following corollary, which follows from the construction in Corollary \ref{no GH limit with area unbounded}. As usual, we denote the entropy of $M$ by $\lambda(M)$. 

\begin{cor}\label{highentropy} For every $\delta > 0$ and $E > 0$, there exists a closed hypersurface $M \subset \mathbb{R}^{n+1}$ which shrinks to a round point and is $\delta$-close in Hausdorff distance to the round sphere, yet $\lambda(M) > E$. 
\end{cor}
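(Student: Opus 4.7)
The plan is to carry out the high-area pancake construction of Corollary \ref{no GH limit with area unbounded} at a very small scale, inside a thin spike attached to the unit round sphere $S \subset \R^{n+1}$. All modifications will be confined to a small ball around a single point $p \in S$, keeping the Hausdorff distance to $S$ below $\delta$, while the area (and therefore the entropy) is made arbitrarily large.

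Fix $\eta < \delta/10$ and $p \in S$. First, apply Theorem \ref{second theorem} to $S$ at $p$ along the outward normal with $L' = \eta$, tubular radius $r' \ll \eta$, and the parameter $N$ taken large. The resulting surface $\widetilde{S} \in \overline{\Sigma}$ is rotationally symmetric about the axis through $0$ and $p$, contains a genuinely cylindrical segment of radius $r'$ and length almost $\eta$, and satisfies $d_H(\widetilde{S}, S) < \eta$. Second, iterate Theorem \ref{third theorem} $K$ times on $\widetilde{S}$, each time adding a rotationally symmetric pancake of height $L < \eta$ and tubular radius much smaller than the transition scale $\epsilon$, where $\epsilon$ is taken small enough that all $K$ pancakes fit inside the cylindrical region. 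Theorem \ref{third theorem} guarantees that at each step the resulting surface stays in $\overline{\Sigma}$, remains rotationally symmetric, and retains cylindrical portions on either side of the new pancake, so the iteration can proceed. Let $M$ be the final surface. Since all modifications take place within $B(p, 2\eta)$, we have $d_H(S, M) < 2\eta < \delta$.

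For the entropy estimate, each added pancake is by construction $C^2$-close to the boundary of the tubular $r$-neighborhood of a segment of length $L$, and so contributes at least some fixed amount $A_0 = A_0(n, L, r') > 0$ of additional area, independently of the pancakes previously added; hence $\Area(M) \geq K A_0$. Since $M \subset \overline{B(0, 2)}$, taking $(x_0, t_0) = (0, 1)$ in the definition of entropy yields
\begin{equation*}
\lambda(M) \;\geq\; \int_M \frac{1}{(4\pi)^{n/2}} e^{-|x|^2/4}\, d\mu \;\geq\; c_n\, \Area(M) \;\geq\; c_n K A_0,
\end{equation*}
and choosing $K$ large enough that $c_n K A_0 > E$ completes the construction.

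The main obstacle is organizing the hierarchy of scales so that every iteration of Theorem \ref{third theorem} can actually be applied: after each pancake is added, the adjacent portions of the cylindrical region must remain $(\delta', c)$-cylindrical with $\delta' \ll c$, and the new pancake must be placed so as not to spoil this on its neighbors. This is exactly the utility of the parameter $N$ in Theorems \ref{second theorem} and \ref{third theorem}, which forces each added piece to be arbitrarily $C^2$-close to an exact tubular neighborhood, so that every stage still looks flat and cylindrical in between successive pancakes and the next iteration remains valid.
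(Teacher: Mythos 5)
Your proof is correct, but it takes a genuinely different route from the paper on both the geometric construction and the entropy estimate, and it is worth noting what each buys. The paper constructs its example by re-running the Corollary~\ref{no GH limit with area unbounded} machinery with all lengths scaled down by~$\delta$: it stacks a nested sequence of thinner and thinner spikes (lengths $\delta/10^{i+2}$) at the tip of the previous spike, with one pancake of radius $\delta/10$ attached halfway up each, so that area accumulates near the limiting point of the spike tips; the entropy lower bound is then obtained by evaluating an $F$-functional centered near that cluster point at the scale of the pancakes. You instead keep a single spike of length $\eta < \delta/10$ and slide $K$ pancakes along its cylindrical interior, which is architecturally simpler but requires you to justify iterating Theorem~\ref{third theorem} several times on the same cylindrical region --- this works because condition (3) of that theorem confines each modification to $[p-\epsilon,p+\epsilon]$ and leaves the rest of the cylinder $(\delta,c)$-cylindrical, so the next pancake can be placed in a disjoint subinterval, a point you correctly flag in your last paragraph. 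Your entropy estimate is more elementary than the paper's: since $M \subset \overline{B(0,2)}$, the Gaussian weight at $(x_0,t_0)=(0,1)$ is bounded below by a dimensional constant, giving $\lambda(M) \geq c_n\Area(M)$ directly, rather than locating a well-chosen center and scale near the pancake cluster. Both approaches are valid; yours trades the paper's recycling of the Corollary~\ref{no GH limit with area unbounded} construction for a cleaner, self-contained estimate, at the modest cost of spelling out the iteration of Theorem~\ref{third theorem} within one spike.
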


We may generalize Corollary \ref{no GH limit with area unbounded} and thus generalize a result of Joe Lauer~\cite{lauer2013newlength} as well as a result of the first named author \cite{Mramor}. Lauer showed that there are sequences of closed embedded curves $\gamma_i$ that limit to a space-filling curve, yet applying the curve shortening flow to each $\gamma_i$ for some time $t$ gives a uniform bound on length $\text{Length}(\gamma_i) < C(t)$. We will prove a higher-dimensional version of this for mean curvature flow. Our arguments for Corollary \ref{space filling surface sequence} do not work for curve shortening flow.

\begin{cor}\label{space filling surface sequence}
There exists a sequence of closed hypersurfaces\footnote{$M_i$ is technically an immersed hypersurface. However, we will often work with its image as a subset of $\mathbb{R}^{n+1}$, which we again denote by $M^i$. This is to have more concise notation, without any loss of rigor.} $M^i$ in $\mathbb{R}^{n+1}$, such that each flow $M^i_t$ exists on a uniform time interval $[0,T]$ and shrinks to a round point, yet $M^i$ limits to a space-filling surface containing the unit ball\footnote{By this, we mean that $M^i$ converges in the Hausdorff distance to some set $\mathcal{K}\subset \mathbb{R}^{n+1}$ such that $\mathcal{K}$ contains a unit ball $\mathcal{B}$. In particular, this means that for each $x \in \mathcal{B}$, there is a sequence of $x_i \in M^i$ such that $\lim_i x_i = x$.}. Moreover, for each $t > 0$, there is $C(t)$ such that $\Area(M^i_t)<C(t)$. 
\end{cor}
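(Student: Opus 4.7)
The plan is to iterate Theorem \ref{second theorem}, adding inward-pointing spikes whose tips become dense in $\overline{B_1(0)} \subset \R^{n+1}$, closely paralleling the construction for Corollary \ref{no GH limit with area unbounded}. Enumerate a countable dense set $\{p_k\} \subset \overline{B_1(0)}$ and let $M^0$ be a round sphere of radius $R \gg 1$ centered at the origin, so $\overline{B_1(0)}$ lies strictly inside $M^0$. Inductively, given $M^{i-1} \in \overline{\Sigma}$, pick $q_i \in M^{i-1}$ on a portion still coinciding with $M^0$ (previously added spikes occupy disjoint small balls) so that the inward unit normal at $q_i$ points toward $p_i$, and let $\gamma_i$ be the segment from $q_i$ to $p_i$. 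Applying Theorem \ref{second theorem} with $p = q_i$, $\gamma = \gamma_i$, and large $N_i$ produces $M^i \in \overline{\Sigma}$ whose $i$-th spike tip $t_i$ satisfies $|t_i - p_i| < r_i(1 + 1/N_i)$. Choose $\epsilon_i, r_i \to 0$ summably fast with $N_i \to \infty$, and $\epsilon_i$ smaller than the distance from $q_i$ to previously added spike supports.

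For the uniform extinction time $T$ and pointwise-in-time area bound, note that each spike is a thin appendage concentrated in $B(q_i, \epsilon_i + L_i)$ of thickness $r_i$ and length $L_i \leq 2R$; under MCF it pinches off into the bulk within time $O(\epsilon_i^2)$, leaving a $C^2$-small perturbation localized in $B(q_i, \epsilon_i)$. By choosing $\epsilon_i$ decaying summably fast, the cumulative effect remains $C^2$-small, so openness of $\overline{\Sigma}$ in $C^2$ combined with Huisken's theorem yields a uniform lower bound $T \geq T_0/2$ on the extinction times, $T_0$ being the extinction time of $M^0$. For fixed $t > 0$, only finitely many spikes (those with $\epsilon_i^2 \gtrsim t$) contribute to $\Area(M^i_t)$, each by a bounded amount, so $\Area(M^i_t) \leq C(t)$ uniformly in $i$.

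For the Hausdorff limit, by compactness of closed subsets of a fixed ball in Hausdorff distance, pass to a subsequence $\{M^{i_j}\}$ Hausdorff-converging to some closed $\mathcal{K} \subset \R^{n+1}$. For each $k$, the tip $t_k$ is a fixed point of $M^{i_j}$ for all $i_j \geq k$ (subsequent spikes are added in disjoint balls avoiding it), hence $t_k \in \mathcal{K}$. For any $p \in \overline{B_1(0)}$ and $\delta > 0$, density of $\{p_k\}$ gives some $k$ with $|p - p_k| < \delta$ and $r_k(1 + 1/N_k) < \delta$, whence $|p - t_k| < 2\delta$; since $\mathcal{K}$ is closed and $\delta$ is arbitrary, $p \in \mathcal{K}$. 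Hence $\overline{B_1(0)} \subseteq \mathcal{K}$.

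The main obstacle is the uniform lower bound on extinction time under infinite iteration, handled by summable scale decay so that each localized perturbation dissipates quickly and the accumulated post-dissipation perturbation of $M^0$ remains $C^2$-small, reducing the long-time behavior to Huisken's theorem and the $C^2$-stability of convex MCF near round extinction.
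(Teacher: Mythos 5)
Your overall plan — iterate Theorem~\ref{second theorem}, add inward-pointing spikes whose tips become dense in the ball, take scales decaying summably for the uniform extinction time and the pointwise area bound, and read off the Hausdorff limit — is the right skeleton and matches the spirit of the paper. However, the specific construction of the spike bases $q_i$ breaks down, and the fix requires the paper's key device of placing spikes on previously-added spikes rather than only on $M^0$.

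Here is the gap. You require each spike base $q_i$ to lie on a portion of $M^{i-1}$ still coinciding with $M^0$, and you require its inward normal to point at $p_i$. But for the round sphere $M^0$ centered at the origin, the inward normal at a point $q$ is radial, so the only $q$ on $M^0$ whose inward normal ray passes through $p_i$ is the radial projection $q_i = R\,p_i/|p_i|$. In other words, $q_i$ is forced, not chosen. Now for the tips to be dense in $\overline{B_1(0)}$, the directions $p_k/|p_k|$ must be dense in $S^{n-1}$, so the base points $q_k = R\,p_k/|p_k|$ are dense on the sphere of radius $R$. Every spike base uses up a ball $B(q_j,\epsilon_j)$ of strictly positive radius $\epsilon_j$. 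But a dense sequence on a sphere is self-accumulating: for every $j$, $\inf_{i>j}|q_i - q_j| = 0$, so there is no $\epsilon_j>0$ for which $q_i \notin B(q_j,\epsilon_j)$ holds for all $i>j$. The adaptive choice of $\epsilon_j$ after observing $q_1,\dots,q_j$ does not help, since the infimum over future base points is already zero. Concretely, the set of tips reachable by straight inward spikes emanating from $M^0$ is indexed by one tip per angular direction, and requiring those angular directions to be dense prevents the bases from being disjoint. (Noting that $\gamma$ can be ``slightly bent'' in Theorem~\ref{second theorem} does not rescue this; the allowed deflection scales with the small curvature bound $C$ and is not enough to decouple tip location from base direction.)

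The paper sidesteps this by not aiming at a prescribed dense set at all. At each stage it chooses maximal separated nets on the \emph{current} surface $M^i$ — including points on the sides and caps of previously added spike tubes — at scales $\sim 1/(10^{i+1}C_i)$ tied to the current curvature bound $C_i$, and adds spikes of a fixed length at those net points in both inward and outward directions. Spikes on spikes give access to a rich and progressively denser family of normal directions, and the proof of space-filling is a contradiction argument: if some small ball $B(x,c)$ were always missed, the nearest point $p_i\in M^i$ to $x$ has normal line through $x$, and for $i$ large a net point within the net scale of $p_i$ has a nearly parallel normal line, so the spike of length $2$ added there would intersect $B(x,c)$. This argument makes no attempt to hit targets directly. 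If you want to keep a direct-aim flavor, you would at minimum need to allow $q_i$ on earlier spikes, at which point your control over which point is reached by the normal at $q_i$ is lost and you are back to a density/contradiction argument.

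Two smaller points: the spikes do not ``pinch off'' — they relax smoothly by Theorem~\ref{second theorem} and Lemma~\ref{nearly graphical lemma}, with no singularity forming; and the remaining items in your write-up (uniform extinction time via cumulative $C^2$-smallness and openness of $\overline{\Sigma}$, the pointwise $\Area(M^i_t)\leq C(t)$ from only finitely many unresolved spikes at time $t$, and the use of Hausdorff compactness to extract a limiting $\mathcal{K}$) are in line with the paper's arguments for Corollaries~\ref{no GH limit corollary} and~\ref{no GH limit with area unbounded} and are fine once the construction is repaired.
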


Before moving on, we point out that the corollaries above may be interpreted as statements regarding the basin of attraction of the round sphere for the mean curvature flow. Thinking of mean curvature flow in a dynamical sense, these corollaries show that the basin of attraction for the round sphere is much more complicated than simply the convex surfaces. In particular, it is not compact under any reasonable topology. 
$\medskip$

To end, we generalize Corollary \ref{space filling surface sequence} to surfaces which do not necessarily shrink to round points. The idea is that for any closed embedded hypersurface $M$, we may find a sequence $M^i$ that limits to a space-filling surface covering the region bounded by $M$, $\text{int}(M)$, and the flows $M^i_t$ approximate the flow $M_t$ for as long as $M_t$ has bounded curvature. As in the construction in the corollary above, one can also arrange these examples to have arbitrarily large area.

\begin{cor}\label{space filling hypersurface}
Let $M\subset \mathbb{R}^{n+1}$, be a closed embedded hypersurface. Suppose that the flow $M_t$ has bounded second fundamental form for time $[0,T]$.

Then, for each $\epsilon>0$, there exists a sequence of closed hypersurfaces\footnote{Here, $M_i$ is technically an immersed hypersurface. See the footnote related to Corollary \ref{space filling surface sequence}.} $M^i$ in $\mathbb{R}^{n+1}$ such that $M^i$ limits to a space-filling surface containing $\text{int}(M)$, each flow $M^i_t$ exists on a uniform time interval $[0,T^*]$, and for some $t_0\in (0,\text{min}(T,T^*))$, $M^i_t$ is $\epsilon$-close to $M_t$ in $C^2$ for all $t\in [t_0, \text{min}(T,T^*)]$. We may find $t_0$ such that $t_0 \to 0$ as $\epsilon \to 0$.
\end{cor}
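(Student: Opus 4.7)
The plan is to generalize the construction behind Corollary \ref{space filling surface sequence}, replacing the role played there by a surface whose flow goes to a round point with the given smooth flow $M_t$, $t \in [0,T]$. Two inputs suffice: a localized version of the spike construction of Theorem \ref{second theorem} that is insensitive to the global behavior of the background surface and depends only on the existence of a smooth MCF near the attachment point on a uniform time interval; and the stability of smooth mean curvature flow with bounded $|A|$ under small $C^2$ perturbations of the initial data.

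First I would fix a countable dense set $\{p_j\} \subset \text{int}(M)$ and, for each $i$, construct $M^i$ by attaching to $M$ a total of $i$ inward-pointing ``spikes'' whose tips approximate $p_1, \dots, p_i$ to within $\tfrac{1}{i}$. For each spike, the base $q_j \in M$ is chosen on the segment from $M$ to $p_j$ realizing $\dist(p_j, M)$, and the spike is a thin perturbation of the tubular neighborhood of the segment from $q_j$ almost to $p_j$. The common attachment scale $\epsilon_i$ and tube radius $r_i$ are chosen very small: $\epsilon_i \to 0$ fast enough that $M^i \to M$ in $C^2$ locally away from the spike interiors, and $r_i$ small enough that $C r_i^2 \ll t_0$ for a fixed $t_0 \in (0, \min(T, T^*))$. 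Standard barrier estimates for thin tubular neighborhoods under MCF imply that each spike disappears by time on the order of $r_i^2$, so all $i$ spikes vanish by time $t_0/2$.

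The main technical issue is adapting Theorem \ref{second theorem} to a background surface that is not assumed to flow to a round point. The relevant part of the proof of Theorem \ref{second theorem} is local: one constructs suitable inner and outer barriers (shrunken/expanded spheres and slightly bent cylindrical tubes) that pin the spike's evolution in a neighborhood of its attachment point, and these barriers do not depend on the global curvature structure of $M$. The openness of $\overline{\Sigma}$ is only invoked in Theorem \ref{second theorem} to guarantee that, after the spike melts, the resulting surface still flows to a round point. In the present setting, the analogous statement is that once the spike melts the resulting surface is $C^2$-close to $M_t$ on the melt time scale, and because $|A_{M_t}|$ is uniformly bounded on $[0,T]$, standard stability of smooth MCF propagates this closeness forward, yielding the desired $\epsilon$-closeness on $[t_0, \min(T, T^*)]$.

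The space-filling property is then immediate: by construction, $M^i$ contains the union of spikes whose tips approximate $p_1, \dots, p_i$, and taking the Hausdorff limit as $i \to \infty$ yields a closed set containing $\overline{\{p_j\}} = \overline{\text{int}(M)} \supset \text{int}(M)$. The hardest step will be the detailed verification that the local spike construction of Theorem \ref{second theorem} transfers to the bounded-curvature setting; this amounts to isolating precisely which portions of the original barrier argument are purely local, and replacing the use of the openness of $\overline{\Sigma}$ by a stability estimate for smooth MCF with bounded second fundamental form.
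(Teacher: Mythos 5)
Your proposal is correct and captures exactly the paper's key insight: the role played by the openness of $\overline{\Sigma}$ in Corollary \ref{space filling surface sequence} must be replaced here by continuous dependence of smooth MCF on initial data, which is available precisely because $|A|$ is assumed bounded for $M_t$ on $[0,T]$. Where you diverge from the paper is in the construction itself. The paper's proof says verbatim that one uses the same iterative net construction as Corollary \ref{space filling surface sequence} (attach inward/outward spikes at $\frac{1}{10^{i+1}C_i}$-separated nets on $M^i$ to build $M^{i+1}$), with density of the Hausdorff limit proved by the contradiction argument involving normal lines. You instead fix a countable dense $\{p_j\} \subset \text{int}(M)$ and, for each $i$, attach to $M$ directly $i$ inward spikes along the normal segments from the nearest-point feet $q_j \in M$ to the $p_j$. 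This buys you two things: density is immediate, with no contradiction argument (every interior point sits on a normal segment from its nearest-point foot, and you target a dense set directly); and if $\{p_j\}$ is chosen off the medial axis, the segments $(q_j, p_j]$ are pairwise disjoint, so for small enough tube radii the $M^i$ are actually embedded, which the paper's construction does not guarantee (see the caption to Figure \ref{space filling arbitrary surface}). Two small things worth flagging. First, a thin spike of radius $r$ and length $L$ melts in time of order $L r$ (bowl-soliton speed $\sim 1/r$ over distance $L$), not $r^2$; this changes nothing in the conclusion. Second, you attach all $i$ spikes to $M$ simultaneously rather than one at a time as in the paper's iteration; you should say a word that pseudolocality decouples the evolution of well-separated thin spikes so that Lemma \ref{nearly graphical lemma} applies to each locally, but since you can take the radii arbitrarily small relative to the separations of the bases (and the segments themselves are disjoint), this is not a genuine obstruction.
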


\begin{figure}
\centering
\includegraphics[scale = .4]{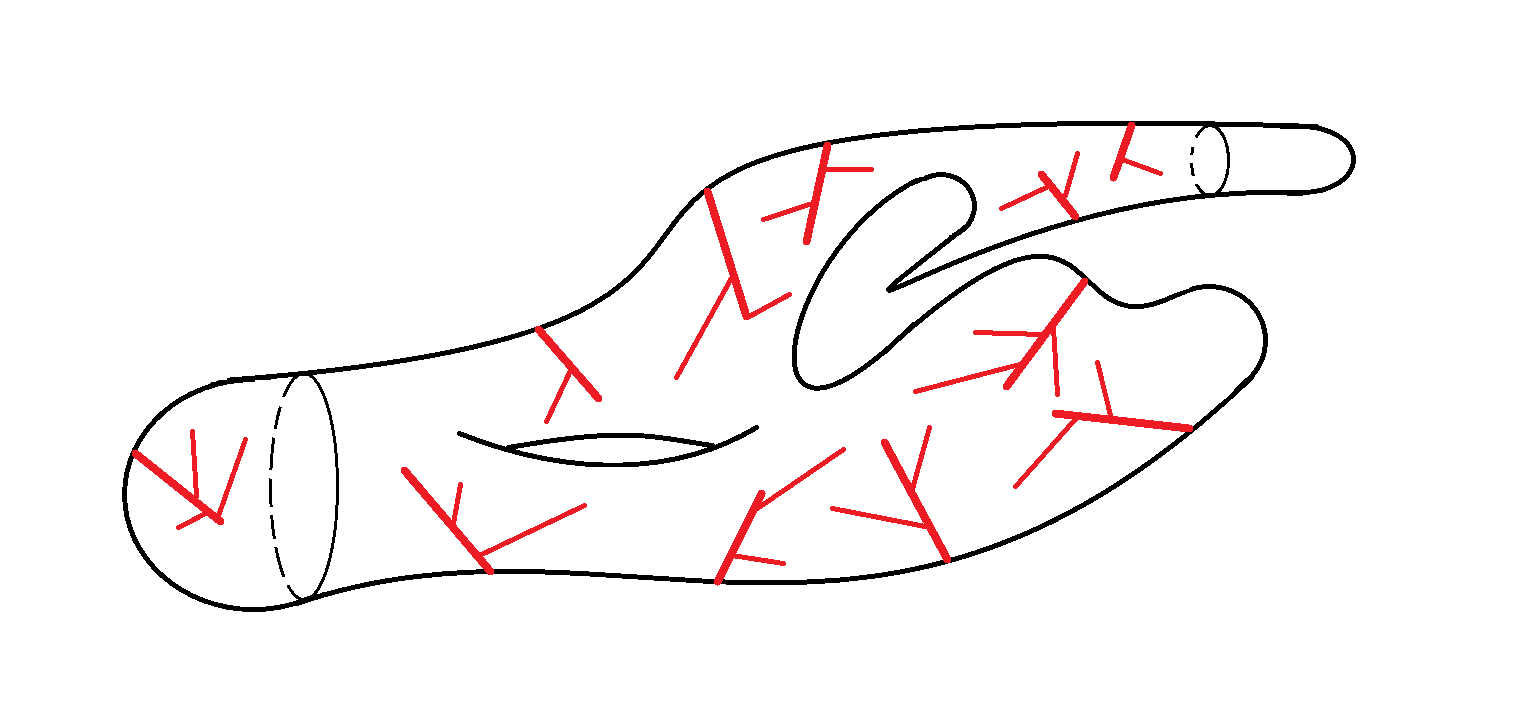}
\caption{A sketch of one of the first elements in the sequence described in Corollary \ref{space filling hypersurface}. The surfaces in our construction are immersed and many of the spikes will intersect, although the construction can likely be altered to preserve embeddedness.}\label{space filling arbitrary surface}
\label{figure 4}
\end{figure}

Figure \ref{space filling arbitrary surface} roughly encapsulates how the sequences in both Corollary \ref{space filling surface sequence} and Corollary \ref{space filling hypersurface} are constructed; we construct the sequence by iteratively adding inward-pointing spikes at smaller and smaller scales.
$\medskip$

Lastly, in the appendix,  using an argument, we prove the mean curvature flow analogue of a result due to Petersen-Tao for Ricci flow~\cite{PetersenTao09} . 
\begin{customthm}{A.2}\label{appendix theorem in intro} Let $\Sigma(d,C)$ be the set of closed embedded hypersurfaces $M^n \subset \mathbf{R}^{n+1}$ such that 
\begin{enumerate}
\item $\diam(M) <d$
 \item $|A|^2< C$
 \end{enumerate}
 Then there exists an $\epsilon(d,C) > 0$ such that if $M \in \Sigma(d,C)$ and $k_{min} > -\epsilon(d,C)$, then $M_t$ flows smoothly to a round point.
  \end{customthm}

This is more general than Theorem \ref{first theorem} since it concerns all almost convex surfaces, as opposed to just neighborhoods of line segments. However, Theorem \ref{appendix theorem} is proven via compactness-contradiction argument and as such it does not seem clear how to effectively estimate the constant $\epsilon$. Theorem \ref{first theorem} is better in this regard because it gives us a more precise understanding of the constants that arise. Note that the class of examples produced via Theorem \ref{second theorem} are likely unattainable via any compactness-contradiction argument on its own as there is no uniform curvature bound on members of that set. 

$\medskip$

$\textbf{Acknowledgements:}$ We are grateful to Bruce Kleiner, the second author's advisor, for encouraging us to construct more pathological examples than we initially had and in particular for suggesting a version of the construction involved in Theorem \ref{third theorem}. The first author additionally thanks his advisor, Richard Schoen, for his support and valuable advice. We are also grateful to the referees for their comments and suggestions which helped improve the clarity of the article.

\section{Preliminaries, Old and New, on the Mean Curvature Flow.}
In this section we collect some standard and perhaps slightly less standard facts and observations on the mean curvature flow which we will employ in the subsequent sections. Let $M$ be an $n$-dimensional 2-sided manifold and let $F: M \to \mathbf{R}^{n+1}$ be an embedding of $M$ realizing it as a smooth closed hypersurface of Euclidean space, which by abuse of notation we also refer to as $M$. Then the mean curvature flow of $M_t$ is given by the image of $\hat{F}: M \times [0,T) \to \mathbf{R}^{n+1}$ satisfying
\begin{equation}\label{MCF equation}
\frac{d\hat{F}}{dt} = H \nu, \text{ } \hat{F}(M, 0) = F(M)
\end{equation}
where $\nu$ is the inward pointing normal and $H$ is the mean curvature. It turns out that (\ref{MCF equation}) is a nonlinear heat-type equation, since for $g$ the induced metric on $M$,
\begin{equation}
 \Delta_g F = g^{ij}(\frac{\partial^2 F}{\partial x^i \partial x^j} - \Gamma_{ij}^k \frac{\partial F}{\partial x^k}) = g^{ij} h_{ij} \nu = H\nu
\end{equation}
One can easily see that the mean curvature flow equation (\ref{MCF equation}) is degenerate. Despite this, solutions to (\ref{MCF equation}) always exist for short time and are unique. There are several ways to deduce this by relating (\ref{MCF equation}) to a nondegenerate parabolic PDE. Solutions to the mean curvature flow satisfy many properties that solutions to heat equations do, such as the maximum principle and smoothing estimates. One important consequence of the maximum principle is the comparison principle (also known as the avoidance principle), which says that two initially disjoint hypersurfaces will remain disjoint over the flow.
$\medskip$ 

Generally speaking, the mean curvature flow cannot be written down explicitly except in cases with a high amount of symmetry. For example, the round sphere shrinks by dilations to a point in finite time. By the avoidance principle, we may use the sphere as a barrier and see that any compact surface develops a singularity in finite time over the flow. It is interesting of course to understand when the manifold shrinks to a point, i.e.\ when there are no ``leftover'' regions of low curvature at the singular time. This generally does not happen---neckpinches occur quite generally---but in some cases it does. The classical result of Huisken gives a simple condition for this to happen~\cite{Huisken84}.
\begin{thm}[Huisken~\cite{Huisken84}] Convex closed hypersurfaces remain convex under mean curvature flow and shrink to a round point in finite time.
\end{thm}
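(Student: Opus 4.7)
The plan is to follow Huisken's original approach: derive the evolution equations for the key geometric quantities, use tensor maximum principles to propagate convexity and pinching, and then rescale at the singular time to extract a round limit.

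First I would record the standard evolution equations under the flow $\partial_t F = H\nu$: for the induced metric $\partial_t g_{ij} = -2H h_{ij}$, for the second fundamental form $\partial_t h_{ij} = \Delta h_{ij} - 2H h_{ik} g^{kl} h_{lj} + |A|^2 h_{ij}$, for the mean curvature $\partial_t H = \Delta H + |A|^2 H$, and for the squared norm $\partial_t |A|^2 = \Delta |A|^2 - 2|\nabla A|^2 + 2|A|^4$. A short maximum-principle argument on $H$ shows $H > 0$ is preserved (and must blow up in finite time $T < \infty$, since a large enclosing round sphere shrinks to a point in finite time and by the avoidance principle confines $M_t$). Then I apply Hamilton's tensor maximum principle to the evolution of $h_{ij}$ to show that the cone $\{h_{ij} \geq 0\}$ is preserved: the reaction term $|A|^2 h_{ij} - 2H h_{ik}g^{kl}h_{lj}$ maps the boundary of this cone into it, so convexity persists on $[0,T)$.

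Next I would prove the key pinching estimate: there exist $\delta > 0$ and $C_0$ such that the traceless second fundamental form satisfies $|\mathring{A}|^2 \leq C_0 H^{2-\delta}$ along the flow. The computation goes by deriving an evolution inequality for $f_\sigma = \frac{|A|^2 - H^2/n}{H^{2-\sigma}}$ for small $\sigma > 0$. One finds
\begin{equation}
\partial_t f_\sigma \leq \Delta f_\sigma + \frac{2(1-\sigma)}{H}\langle \nabla H, \nabla f_\sigma \rangle - \varepsilon |\nabla A|^2/H^{2-\sigma} + \sigma |A|^2 f_\sigma,
\end{equation}
and a good-terms-absorbing-bad-terms argument combined with an $L^p$ iteration (Stampacchia) on the flow bounds $f_\sigma$ uniformly in time, which is where I expect most of the technical work to live. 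Paired with this, a Bernstein-type gradient estimate $|\nabla A|^2 \leq C H^4$ is derived from the evolution of $|\nabla A|^2 / H^4$ (again by maximum principle with careful sign-tracking).

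With these pointwise estimates in hand, I would rescale: set $\widetilde{F}(\cdot,t) = (2(T-t))^{-1/2}(F(\cdot,t) - x_0)$ for an appropriate $x_0$, and reparametrize time as $\tau = -\tfrac{1}{2}\log(T-t)$. The rescaled flow $\widetilde{M}_\tau$ satisfies $\partial_\tau \widetilde{F} = \widetilde{H}\widetilde{\nu} - \widetilde{F}$, its second fundamental form is uniformly bounded (from the pinching and the fact that $H_{\max}(t) \sim (T-t)^{-1/2}$, combined with a lower bound on $H$ from containment between shrinking spheres), and the pinching $|\mathring{A}|^2 \leq C \widetilde{H}^{2-\delta}$ plus $\widetilde{H}$ bounded above and below forces $|\mathring{A}| \to 0$ as $\tau \to \infty$. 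By Arzelà–Ascoli (and higher derivative estimates from the flow smoothing) a subsequence converges smoothly to a compact umbilic hypersurface, necessarily a round sphere. Finally, uniqueness of the limit and smooth convergence to the round sphere of the full rescaling follows from the evolution equation on $|\mathring{A}|^2/H^2$ combined with integrating the decay; this upgrades subsequential convergence to $C^\infty$ convergence of the entire rescaled flow, proving the original flow shrinks to a round point.

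The main obstacle, in my view, is the pinching estimate: choosing the exponent $\sigma$, setting up the correct Simons-type identity for $\Delta|A|^2$, and running the Stampacchia iteration to conclude $\sup f_\sigma < \infty$. Everything after this step is driven by the maximum principle and a clean rescaling argument, but the pinching estimate is the genuinely nontrivial input that turns preservation of convexity into roundness at the singularity.
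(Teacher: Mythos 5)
The paper does not prove this statement; it is recalled as background and attributed to Huisken's foundational paper \cite{H1}. Your sketch is a faithful summary of Huisken's original argument: the evolution equations, preservation of (semi)convexity via Hamilton's tensor maximum principle, the pinching estimate on $f_\sigma = (|A|^2 - H^2/n)/H^{2-\sigma}$ obtained by a Stampacchia--Moser iteration, the gradient estimate, and the rescaling to a round limit. The only mild discrepancies with the original are cosmetic: Huisken preserves the stronger open cone $h_{ij} \geq \varepsilon H g_{ij}$ rather than mere semidefiniteness (which in any case follows from the pinching), his gradient term in the $f_\sigma$ inequality is phrased through $|\nabla H|^2$ and recovered via the convexity inequality $|\nabla A|^2 \geq \frac{3}{n+2}|\nabla H|^2$, and he controls the asymptotics via the volume-normalized flow rather than the self-similar parabolic rescaling you use --- both normalizations deliver the same conclusion. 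You should also note $n \geq 2$ is needed for the pinching argument (the $n=1$ case is Gage--Hamilton), which is consistent with the paper's standing convention.
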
 

The goal of this paper is to extend Huisken's theorem to certain special types of non-convex surfaces. First, we state the one-sided minimization property of mean convex flows, discovered by Brian White~\cite{Wh03}.

\begin{thm}[White~\cite{Wh03}]\label{theorem one sided min} Let $K_t$ be a flow of domains where $\partial K_t$ are mean convex and evolve by mean curvature flow. Let $B = B(x,r)$ be a ball, and let $S$ be a slab in $B$ of thickness $2\epsilon r$ passing through the center of the ball, i.e.
\begin{equation}
 S = \{ y \in B \mid dist(y, H) < \epsilon r \}
 \end{equation} 
where $H$ is a hyperplane passing through the center of the ball and $\epsilon > 0$. 
Suppose $S$ is initially contained in $K$, and that $\partial K_t \cap B$ is contained in the slab $S$. Then $K_t \cap B \setminus S$ consists of $k$ of the two connected components of $B \setminus S$, where $k$ is $0, 1$, or $2$. Furthermore,
 \begin{equation} 
 \Area(\partial K_t \cap B) \leq (2 - k + 2n \epsilon) \omega_n r^n. 
 \end{equation} 
 \end{thm}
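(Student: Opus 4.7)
The plan is to combine a topological observation about how $\partial K_t$ sits inside $B$ with White's general one-sided area minimization property for mean convex flows, which I would take as input.

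First, for the structural claim about $k$: since $\partial K_t \cap B \subset S$, the open set $B\setminus \bar{S}$ contains no points of $\partial K_t$, and this open set has exactly two connected components, namely the two spherical caps cut off by the flat walls of the slab. Since a connected subset of $B$ disjoint from $\partial K_t$ must lie entirely in $\text{int}(K_t)$ or entirely in its exterior, each cap lies wholly on one side of $\partial K_t$; hence $K_t\cap(B\setminus S)$ is exactly the union of some $k\in\{0,1,2\}$ of the two caps.

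For the area inequality, I would invoke White's one-sided minimization: for any set $K'\supset K_t$ with $K'\setminus K_t$ compactly contained in $B$, one has $\Area(\partial K_t\cap B)\leq\Area(\partial K'\cap B)$. As the competitor I would take the truncated slab $S_\delta := S\cap B(x,r-\delta)$, which is compactly contained in $B$, and set $K' := K_t\cup S_\delta$. Because $\partial K_t\cap B\subset S$ lies strictly inside the open slab while the flat walls of $S_\delta$ sit on $\partial S$, the walls are disjoint from $\partial K_t$; by the connectedness argument above, the wall facing a given cap lies in $\text{int}(K_t)$ precisely when that cap does. Consequently $\partial K'\cap B$ is the union of (i) the $(2-k)$ flat walls of $S_\delta$ facing caps outside $K_t$, (ii) the curved rim of $S_\delta$ on the sphere $\partial B(x,r-\delta)$, and (iii) the portion of $\partial K_t$ sitting in the thin shell $B\setminus B(x,r-\delta)$.

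Each wall in (i) is an $n$-disk of radius $\sqrt{(r-\delta)^2-\epsilon^2 r^2}\leq r$, contributing at most $(2-k)\omega_n r^n$ in total. The rim in (ii), namely $\{y\in\partial B(x,r-\delta):\dist(y,H)<\epsilon r\}$, is a spherical belt of width $2\epsilon r$ around a great $(n-1)$-sphere, and a generalized Archimedes computation bounds it by $2n\epsilon\omega_n r(r-\delta)^{n-1}\leq 2n\epsilon\omega_n r^n$. Piece (iii) is a portion of the smooth surface $\partial K_t$ contained in a shell of thickness $\delta$ adjacent to $\partial B$, which (after choosing $r$ slightly so that $\partial K_t$ meets $\partial B$ transversally) has area tending to $0$ as $\delta\to 0$. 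Passing to the limit yields the claimed bound $\Area(\partial K_t\cap B)\leq(2-k+2n\epsilon)\omega_n r^n$. The only nontrivial ingredient is the one-sided minimization itself, which is the heart of White's theorem; the rest is a direct geometric computation, whose main subtlety is the truncation to $B(x,r-\delta)$ needed so that the competitor is compactly supported in $B$, together with the elementary limiting argument used to dispose of the annular remainder.
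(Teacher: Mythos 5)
The paper does not prove this theorem: it is stated as a citation of White's result \cite{White2} and used as a black box throughout. So there is no ``paper's own proof'' to compare against; your proposal instead reconstructs the argument that appears in White's original paper, and it does so essentially correctly.

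Your topological argument for the structure of $K_t\cap(B\setminus S)$ is right: the two open caps $B\setminus\bar S$ are connected, miss $\partial K_t$, and hence each lies wholly in $\operatorname{int}(K_t)$ or wholly in its exterior, and the same connectedness reasoning places each flat wall of the slab on the same side as its adjacent cap. The competitor $K'=K_t\cup S_\delta$ is also the right one, and your decomposition of $\partial K'\cap B$ into $(2-k)$ exposed walls, the spherical rim of $S_\delta$, and a negligible shell piece is exactly how the bound $(2-k+2n\epsilon)\omega_n r^n$ is obtained, including the Archimedes-type estimate for the spherical belt (which, as a sanity check, requires $n\geq 2$, since the coarea factor $(\rho^2-t^2)^{(n-2)/2}$ is only bounded in that range---but that is the standing assumption).

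One point worth tightening: the one-sided minimization property you invoke is not quite ``any $K'\supset K_t$ with $K'\setminus K_t\Subset B$ works.'' The competitor must also lie in the region the flow has already swept, i.e.\ one needs $K_t\subset K'\subset K_{t_0}$ for some earlier time $t_0$. In your application this is precisely what the hypothesis ``$S$ is initially contained in $K$'' provides: $S_\delta\subset S\subset K_0$, so $K'=K_t\cup S_\delta\subset K_0$ and the competitor is admissible. You use this hypothesis implicitly (you would have no reason to mention $S\subset K_0$ otherwise), but it deserves to be stated as the condition that licenses the competitor, rather than being left tacit.
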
 

Our extensions of Huisken's theorem will use some barrier arguments. The next proposition is a basic observation about barriers, but it is centrally used in the proof of Theorems \ref{first theorem}. It will be used in conjunction with Proposition \ref{smooth} below. 
\begin{prop}\label{subsolution proposition} Let $M$ be a closed hypersurface in $\mathbb{R}^{n+1}$ corresponding to the boundary of a domain $K$. Let $\tilde{M}$ be a closed hypersurface in $\mathbb{R}^{n+1}$, disjoint from $M$, corresponding to the boundary of a domain $\tilde{K} \supset K$. Denote the mean curvature flow of $M$ by $M_t$, and denote by $\widetilde{M}_t$  the flow of $M$ with speed function $X(p,t)$ satisfying
\begin{equation}
 X(p,t) \leq H(p,t)
 \end{equation}
Denote the flows of the corresponding domains by $K_t$ and $\widetilde{K}_t$, respectively. Then, for any $T$ such that $M_t$ and $\tilde{M}_t$ are smoothly defined for $[0,T]$, $K_T \subseteq \widetilde{K}_T$.
$\medskip$

Analogously, if $H \leq X$ and $\tilde{K}\subset K$, then $\widetilde{K}_T \subseteq K_T$. 
 \end{prop} 
 \begin{proof}
Suppose that for some time $t_0> 0$, $M_{t_0} \cap \widetilde{M}_{t_0} \neq \emptyset$. Then, since $M_t$ is a mean curvature flow and $X \leq H$ on $\widetilde{M}_t$, there exists $\epsilon>0$ such that $\widetilde{M}_t \cap M_t = \emptyset$ for $t_0 < t < t_0 + \epsilon$. This follows from a standard argument using the strong maximum principle (see \cite[Thm. 2.2.1]{Mantegazza}). This shows that $K_t \subseteq \widetilde{K}_t$ so long as each flow is smooth. The other direction follows analogously.
 \end{proof}
 
With Proposition \ref{subsolution proposition} in mind, throughout this paper we will use the following definition:
\begin{defn}
If a flow of surfaces satisfies $X \leq H$ as in the above theorem, then it is called a subsolution. On the other hand, if $X\geq H$, then it is called a supersolution.
\end{defn}

In the sequel we will need some facts particular to mean convex flows which we discuss next. 
$\medskip$

 Our first goal is to prove Proposition \ref{smooth}, which will use the Brakke regularity theorem, originally shown by Brakke in his thesis \cite{Brakke1978}. The following version of the regularity theorem, simpler to state, is due to Brian White \cite{White05}. It is true for smooth flows up to their first singular time but can be used to rule out singularities in a short forward period of time (and hence iterated) if it is applicable at every point of a fixed timeslice of a flow. In the following theorem, we use the notation that $\tau$ is the time function, i.e.\ for a spacetime point $X = (x,t)$, $\tau(X) = t$.

In the following theorem and lemma, we recall that the Gaussian density ratio $\Theta(M_t, X, r)$ is given by
 \begin{equation}
 \Theta(M_t, X, r) = \int_{y \in M_{t - r^2}} \frac{1}{(4\pi r^2)^{n/2}} e^\frac{-|y - X|^2}{4r^2} d \mathcal{H}^n(y)
 \end{equation} 
By Huisken's monotonicity formula~\cite{Huisken90}, this quantity is monotone nonincreasing as $r$ decreases.

 \begin{thm}[White~\cite{White05}]\label{Brakke} There exist $\epsilon_0 = \epsilon_0(n) > 0$ and $C = C(n) < \infty$ with the following property: If $\mathcal{M}$ is a smooth proper mean curvature flow starting from a hypersurface $M$ in an open subset $U$ of the spacetime $\mathbb{R}^{n+1} \times \mathbb{R}$ and if the Gaussian density ratios $\Theta(M_t, X, r)$ are bounded above by $1 + \epsilon_0$ for $0 < r< \rho(X,U)$, then each spacetime point $X = (x,t)$ of $\mathcal{M}$ is smooth and satisfies:
 \begin{equation}
 |A| \leq \frac{C}{\rho(X, U)}
 \end{equation}
 where $\rho(X,U)$ is the infimum of the parabolic distance $||X - Y||$ among all spacetime points $Y \in U^c$, where $\tau(Y) \leq \tau(X)$.
 \end{thm}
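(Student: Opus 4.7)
The plan is to establish this via Huisken's monotonicity formula coupled with a blow-up / contradiction argument, which is the standard route for $\varepsilon$-regularity statements of Brakke type. Huisken's formula tells us that for any spacetime point $X_0 = (x_0, t_0)$, the density ratio $\Theta(\mathcal{M}, X_0, r)$ built from the backward heat kernel centered at $X_0$ is monotone non-increasing as $r \to 0$ along the flow. In particular, the hypothesis $\Theta(M_t, X, r) \leq 1 + \varepsilon_0$ transfers, after rescaling, to the same bound on any limit flow obtained from parabolic blow-ups at $X$.

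The argument proceeds by contradiction. Suppose the theorem fails: then there is a sequence of smooth flows $\mathcal{M}_i$ in $U$ satisfying the density hypothesis with $\varepsilon_0 = 1/i$, together with spacetime points $X_i \in \mathcal{M}_i$ where $|A|^2(X_i)\, \rho(X_i, U)^2 \to \infty$. First I would apply a point-picking procedure (a parabolic Hamilton-style selection) to replace $X_i$ by new spacetime points $Y_i$ at which $|A|^2(Y_i) =: Q_i \to \infty$ and where $|A|^2 \leq 4 Q_i$ on a parabolic neighborhood of $Y_i$ of radius comparable to $1/\sqrt{Q_i}$. Then rescale each $\mathcal{M}_i$ by $\sqrt{Q_i}$ around $Y_i$ to obtain a sequence of flows $\widetilde{\mathcal{M}}_i$ whose second fundamental form is bounded by $2$ on parabolic unit balls about the origin, with $|A|(0,0) = 1$.

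With the curvature bound in hand, one extracts a subsequential smooth limit flow $\widetilde{\mathcal{M}}_\infty$ via Arzel\`a--Ascoli together with the standard higher-derivative estimates for mean curvature flow, which propagate an $|A|$-bound to bounds on all derivatives. The limit is a smooth eternal flow on which $|A|(0,0) = 1$. Crucially, by scale-invariance of the Gaussian density and the hypothesis, the density ratios on $\widetilde{\mathcal{M}}_\infty$ at every spacetime point and every scale are bounded by $1$ (in the limit $\varepsilon_0 \to 0$). The rigidity case of Huisken's monotonicity then forces $\widetilde{\mathcal{M}}_\infty$ to be a self-similar flow with density identically equal to $1$. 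Since $1$ is the Gaussian density of a flat hyperplane and any nonplanar self-shrinker has strictly larger entropy (e.g.\ by the classification/entropy bounds for shrinkers, or directly from the equality case of monotonicity), $\widetilde{\mathcal{M}}_\infty$ must be a static hyperplane. But then $|A| \equiv 0$ on the limit, contradicting $|A|(0,0) = 1$.

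The hard part will be the rigidity step: one must argue that density identically $1$ on an eternal smooth limit flow forces the limit to be a plane. This is essentially a version of Allard's regularity theorem in the parabolic setting, and it is the technical heart of Brakke's original proof; White's reformulation packages it cleanly, but it still demands care with the varifold convergence needed to conclude smoothness of the limit in the first place (and hence the point-picking step must produce enough curvature control to guarantee smooth, not merely varifold, subconvergence). Once the limit is identified as a plane, the contradiction closes and the stated estimate $|A|^2 \leq C/\rho(X,U)^2$ (with the $\rho(X,U)^{-2}$ scaling dictated by parabolic rescaling) follows with a universal constant $C = C(n)$.
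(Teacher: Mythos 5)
The paper does not prove this theorem; it is stated as a cited result from White \cite{W1}, so there is no internal proof to compare your argument against. The closest internal argument is the proof of Lemma~\ref{refinement}, which is a closely related blow-up/compactness argument but one that \emph{bootstraps} from Theorem~\ref{Brakke} itself to get the a priori curvature bound needed for smooth subconvergence. Your argument, which tries to prove Theorem~\ref{Brakke} from scratch, cannot use that shortcut and must manufacture curvature bounds another way, which you (correctly) do via a Hamilton-style point-picking step.

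Your outline is essentially the standard $\varepsilon$-regularity route and is sound in broad strokes. Three comments. First, your worry about varifold vs.\ smooth convergence is largely unfounded in the smooth setting of this statement: once the point-picking gives $|A|\leq 2$ on parabolic balls of radius $A_i\to\infty$ after rescaling, Ecker--Huisken interior estimates upgrade this to uniform bounds on all derivatives, and Arzel\`a--Ascoli gives smooth subconvergence to a smooth eternal limit flow with no varifold machinery. (The varifold technology is the heart of White's paper only because he treats general integral Brakke flows, where there is no second fundamental form to point-pick.) Second, the rigidity step is cleaner than you suggest and does not require entropy lower bounds or any classification of shrinkers: once the limit has density equal to $1$ at every spacetime point and scale, equality in Huisken's monotonicity gives that it is self-shrinking with respect to \emph{every} center, and an observation of White (Lemma 3.2.17 in \cite{Mant}, which is exactly what the paper invokes in proving Lemma~\ref{refinement}) then forces the flow to be a static hyperplane, contradicting $|A|(0,0)=1$. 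Third, note that your concluding estimate $|A|^2\leq C/\rho(X,U)^2$ is the parabolically scale-invariant one and is what appears (as $|A|\leq C/\rho$) in White \cite{W1}; the paper's displayed inequality with $\rho(X,U)^{-1}$ is a typo.
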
 
 We will also need the following slight refinement of the statement above.
\begin{lem}\label{refinement} Under the same hypotheses of the theorem above, for every $C' > 0$ there exists $\epsilon_0 = \epsilon_0(C',n)$ so that if $\Theta(M_t, X, r)$ are bounded from above by $1 + \epsilon_0$ for $0 < r< \rho(X,U)$, then $|A| \leq \frac{C'}{\rho(X, U)}$.
 \end{lem}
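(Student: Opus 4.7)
The plan is a contradiction argument via a standard blow-up/compactness procedure. Suppose the refined estimate fails for some fixed $C' > 0$; then for each integer $i \geq 1$ we can find a smooth mean curvature flow $\mathcal{M}^i$ in a spacetime open set $U^i \subset \mathbb{R}^{n+1}\times\mathbb{R}$, whose Gaussian density ratios satisfy $\Theta(M^i_t, X, r) \leq 1 + 1/i$ for all admissible scales $0 < r < \rho(X, U^i)$, together with a spacetime point $X^i \in \mathcal{M}^i$ for which
\begin{equation*}
|A|^2(X^i)\, \rho(X^i, U^i) > C'.
\end{equation*}
First I would translate $X^i$ to the origin and parabolically rescale by $\rho(X^i, U^i)$, so that in the new coordinates $\rho(X^i, U^i) = 1$ and $|A|^2(X^i) > C'$, while the density bound $1 + 1/i$ is preserved (Gaussian density ratios being parabolic-scaling invariant).

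For $i$ large enough that $1/i \leq \epsilon_0(n)$, where $\epsilon_0(n)$ is the constant in Theorem \ref{Brakke}, the original Brakke--White theorem applies and yields the uniform curvature bound $|A|^2(Y) \leq C(n)/\rho(Y, U^i)$ on each rescaled flow. On any fixed compact parabolic region around the origin we therefore have uniform $C^2$ bounds, and the standard interior smoothing estimates for mean curvature flow (Ecker--Huisken) upgrade these to uniform $C^k$ bounds for every $k$. By Arzelà--Ascoli, after passing to a subsequence, $\mathcal{M}^i$ converges smoothly on compact subsets to a smooth mean curvature flow $\mathcal{M}^\infty$ defined on all of $\mathbb{R}^{n+1}\times\mathbb{R}$.

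Since Gaussian density ratios are continuous under smooth convergence, the limit satisfies $\Theta(M^\infty_t, Y, r) \leq 1$ for every spacetime point $(Y,t)$ and every scale $r > 0$. On the other hand, at any smooth point of a mean curvature flow one always has $\Theta \geq 1$, so $\Theta \equiv 1$ on $\mathcal{M}^\infty$. The equality case of Huisken's monotonicity formula then forces $\mathcal{M}^\infty$ to be backwards self-similar with respect to every spacetime basepoint, and the only smooth mean curvature flows with Gaussian density identically equal to one are (multiplicity-one) static hyperplanes. Hence $|A|^2 \equiv 0$ on $\mathcal{M}^\infty$, and smooth convergence gives $|A|^2(X^i) \to |A|^2(0) = 0$, contradicting $|A|^2(X^i) > C'$.

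The main obstacle is securing smooth (rather than merely weak Brakke) compactness of the rescaled sequence; this is ensured precisely by invoking the original Brakke--White bound once $i$ is large enough, which is the cleanest way to bootstrap from a one-sided density bound to classical convergence. The closing step---deducing that a flow with density identically one must be a flat plane---is the content of the equality case of the monotonicity formula, and smoothness of the limit rules out higher-multiplicity issues a priori.
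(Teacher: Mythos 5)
Your proof is correct and takes essentially the same approach as the paper: a contradiction/compactness argument that recenters and parabolically rescales the offending sequence, invokes the ungraded Brakke--White theorem for \emph{a priori} curvature bounds (hence smooth subsequential convergence via Shi/Ecker--Huisken interior estimates), and then uses Huisken monotonicity rigidity to conclude the limit is a flat plane, contradicting the nonvanishing curvature at the origin. The paper phrases the final step slightly differently---deriving density $\equiv 1$ only at the rescaled basepoint and citing White's observation (Lemma 3.2.17 in Mantegazza) that a self-shrinker with bounded curvature is flat---whereas you argue from density $\leq 1$ at all spacetime points; both lead to the same conclusion and you should cite such a rigidity statement rather than assert it, but the substance is identical.
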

\begin{proof} To see this we may proceed by contradiction, using estimates from the regular statement of the Brakke regularity theorem above but using essentially the same argument as White. Suppose that the result does not hold. Then, there exists $C'>0$ and a sequence of flows $M^i$ and spacetime points $X_i$ which violate the statement. After recentering $X_i$ to the origin and rescaling by $\rho(X_i, U)$, we may find a sequence of smooth flows $M^i$ in some open set $U$ of spacetime with $|A|^2(0) \geq C'$ but $\Theta(M_t^i, 0, r)$ are bounded above by $1 + \epsilon_i$ for $0 < r< \rho(0,U)$ where $\rho(0,U) > 1$ and $\epsilon_i \to 0$. Note by Brakke regularity and Shi's estimates, we may then pass to a subsequence of flows which smoothly converge to a limit $N$, so that the limit $N$ has nonzero curvature at the origin but has $\Theta(N, 0,r)=1$ for $0<r<\rho(0,U)$. 
$\medskip$
 
By Huisken's monotonicity formula \cite{Huisken90} (see Ecker's local version of Huisken's monotonicity formula~\cite{Ecker01}), we see the limiting flow must satisfy the self shrinker equation in a neighborhood of $0$. Since a priori the curvature is bounded (again, since the regular statement of the Brakke regularity theorem holds) we see then by following the proof of an observation of White (see \cite[Lemma 3.2.17]{Mantegazza}) that $N_t$ is flat near $0$. Indeed, this follows from the fact that $N_t$ satisfies the shrinker equation near $0$, i.e. $\vec{H}(y) = \frac{\langle y, \nu(y)\rangle}{2t}$ for $t <0$ and $y$ near $0$. Multiplying by $2t$ and letting $t \to 0$ and using the uniform bound on curvature, we find that $\langle y, \nu(y) \rangle =0$ for all $y$ near $0$. This implies that $N_t$ is a flat hyperplane in a neighborhood of the origin, which contradicts the fact that $N$ has nonzero curvature at the origin. 
\end{proof}

The idea in the following proposition is to rule out ``microscopic singularities'' similar to the core ideas of \cite{Mramor} and \cite{MramorPayne}. Throughout this paper, we will use the following proposition to squeeze the flow between two better-understood barriers, in order to control the curvature of the flow. This will rule out singularities occuring in between sufficiently close barriers with small curvature, assuming the flow is initially smooth with small graphical norm over the barriers. In the statement, we refer to the flows of domains so as to more clearly distinguish ``inside'' from ``outside.''

 \begin{prop}\label{smooth} Let $B = B(x,r)$ be a ball, and let $K_t$ be a mean curvature flow of domains for $t \in [0,T)$, $T>1$. For $[0,T]$, let $I_t$ and $O_t$ be a flow of domains in $B$, not necessarily via the mean curvature flow, such that $I_t \subset K_t \subset O_t$ in $B$ and $\partial I_t$ and $\partial O_t$ are smooth hypersurfaces with $\partial I_t \cap \partial O_t = \emptyset$. Let $S_t= O_t \setminus I_t$. Suppose that
 \begin{enumerate}
 \item $K_t \cap B \setminus S_t$ is exactly\footnote{We must also stipulate that $B$ is large enough so that these two sets are nonempty.} $I_t \cap B \setminus S_t$
 \item there exists $C > 0$ so that $|A|^2 < C$ on $I_t$, $O_t$, and $\partial K_t \setminus B(x, \frac{r}{2})$ for $t \in [0,T]$.
\end{enumerate}
\noindent Then, for every $\rho >0$, there exists $\delta^* > 0$ with the following properties: Suppose that for $0 < \delta < \delta^*$,

\begin{enumerate}[resume]
\item $\partial I_t$ can be written as a graph $g$ over $\partial O_t$ with $|g|_{C^2(\partial O_t)} < \delta$ for $t \in [0,T]$, and
\item $\partial K_0$ can be written as a graph $f$ over $\partial I_0$ and $\partial O_0$ with $|f|_{C^2(\partial I_0)} < \rho$.
\end{enumerate}
Then,  $\partial K_t$ exists smoothly for $[0,T]$ and for $t \in [0,T]$, there exist graphs $f^1_t, f^2_t$ defined over $\partial I_t \cap B(x, \frac{r}{2})$ and $\partial O_t \cap B(x, \frac{r}{2})$, respectively, such that $\partial K_t \cap B(x, \frac{r}{2})$ coincides with $f^1_t, f^2_t$ and $||f^1_t||_{C^2(\partial I_t)}, ||f^2_t||_{C^2(\partial O_t)}\leq 2\rho$.
 \end{prop}

 \begin{proof} 
There exists a graph $f^1_t$ over $\partial I_t$ and $s\ll 1$ depending on $C$ such that the graph of $f_t$ coincides with $\partial K_t \cap B(x, r)$ and $||f^1_t||_{C^2(\partial I_t)} < 2\rho$ for $t \in [0,s]$. We may take $s$ to be the doubling time for $\partial K_0$, so it only depends on $C$ and $\rho$. Note that for this choice of $s$, we may choose $\delta^*$ small enough so that $\partial K_t \cap B$ will remain a graph over $\partial I_t$ for $t \in [0,s]$, and so there exists the desired $f^1_t$. This follows since $\partial K_t\cap B$ remains within the $\delta^*$-neighborhood of $\partial I_t$, which combined with the curvature bound on $\partial I_t$ and $\partial K_t$ over $t \in [0,s]$ (since $s$ is the doubling time) implies that $\partial K_t \cap B$ remains graphical over $\partial I_t$ for $t \in [0,s]$. 
$\medskip$

We now apply the Brakke regularity theorem to $\partial K_t \cap B$ for $t \in [0,s]$ to improve the $C^2$ estimate on $f^1_t$. From (1) and (3), we have that $||f^1_t||_{C^0(\partial I_t)} < \delta$ in $B$ for $t \in [0,T]$. Combined with the fact that $||f^1_t||_{C^2(\partial I_t)} < 2\rho$ in $B$ for $t \in [0,s]$, we have that $||f^1_t||_{C^1(\partial I_t)} < M\rho \delta^*$ in $B$ for $t \in [0,s]$, where $M$ is a constant depending on $C, \rho$. By choosing $\delta^*$ small enough, this implies that for the ball $B$, the area of $\partial K_t$ as a graph over $B \cap \partial I_t$ is arbitrarily close to the area of $B \cap \partial I_t$. That is, the $C^1$ estimate on $f^1_t$ goes to zero as $\delta^* \to 0$. So, for $0< \gamma < 1$ (to be chosen later) and each $r^*\ll 1$, we find that $\Theta(\partial K_t \cap B, x, r^*)$ is arbitrarily close to $\Theta(\partial I_t \cap B, x, r^*)$ for $x \in B(x, \gamma r)$ and $\delta^*$ chosen small enough. We then apply the Brakke regularity theorem, particularly Lemma \ref{refinement} in in $B(x, \gamma r)$, over uniformly small scales $r^*$ to find that $||f^1_s||_{C^2(\partial I_s)} < \frac{3\rho}{2}$ in $B(x, \gamma r)$. Then, let $s^*$ be the time such that there exists a graph $f^1_t$ over $\partial I_t$ which coincides with $\partial K_t \cap B(x,\gamma r)$ and $||f^1_t||_{C^2(\partial I_t)} < 2\rho$ in $B(x, \gamma r)$ for $t \in [0,s+s^*]$. By choosing $\gamma > \frac{1}{2}$, $s^*$ depends only on $C, \rho$, since $\partial K_t \setminus B(x, \frac{r}{2})$ satisfies $|A|^2 < C$. Applying the same reasoning as above, we get that $||f^1_t||_{C^1(\partial I_t)} < M'\rho \delta$ in $B(x, \gamma r)$ for $t \in [0, s+s^*]$, where $M'$ depends on $C, \rho$. Applying Lemma \ref{refinement} again with $\delta$ small enough, we find that $||f^1_t||_{C^2(\partial I_t)} < \frac{3\rho}{2}$ in $B(x, \gamma r)$ for $t \in [s+s^*]$. We then iterate this argument for all $\gamma$ close enough to $1$ to find that $||f^1_{t}||_{C^2(\partial I_t)}<2 \rho$ in $B(x, \gamma^N r)$ for $t \in [0,T)$, where $N$ is the number of iterations. Note that in each iteration, $s^*$ depends only on $C, \rho$, as it is a doubling time, so this argument need only be iterated finitely many times. Then, if we choose $\gamma$ such that $\gamma > \frac{1}{2^{1/n}}$, this concludes the result for $f^1_t$. 
\medskip

An identical argument works to prove the that the same result holds for a graph $f^2_t$ over $\partial O$. This implies that $\partial K_t$ is smooth with uniformly bounded curvature everywhere for $t \in [0,T)$ and so in particular $\partial K_t$ exists smoothly for $t \in [0,T]$. 

\end{proof}

We conclude this section with a discussion about pseudolocality in mean curvature flow. Pseudolocality says that the mean curvature flow at some point is controlled for short time by the curvature in a ball around that point, and far away parts of the flow affect the flow around the point very little. Pseudolocality plays a crucial role in our  arguments in the next couple of sections, such as in the proof of Lemma \ref{nearly graphical lemma}. The following theorem due to Chen and Yin (\cite[Theorem 7.5]{Chen2007}), which is adapted to the particular case of ambient Euclidean space, underpins our usage of pseudolocality in this paper. See also the more general pseudolocality result of Ilmanen-Neves-Schulze~\cite{INS}.

\begin{thm}[Chen-Yin~\cite{Chen2007}]\label{pseudolocality} There is an $\epsilon_*>0$ with the following property. Suppose we have a smooth solution $M_t \subset \mathbb{R}^{n+1}$ to mean curvature flow properly embedded in $B(x_0, r_0)$ for $t\in [0,T]$ where $T\leq \epsilon_*^2 r_0^2$. We assume that at time zero, $x_0 \in M_0$, the second fundamental form satisfies $|A|(x) \leq r_0^{-1}$ on $M_0 \cap B(x_0, r_0)$, and $M_0$ is graphical in the ball $B(x_0, r_0)$. Then,
\begin{equation}|A|(x,t)\leq (\epsilon_* r_0)^{-1}\end{equation}
for any $x \in B(x_0, \epsilon_* r_0) \cap M_t$ for $t\in [0,T]$.   

\end{thm}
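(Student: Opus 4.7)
The plan is to prove the theorem by combining a localized version of Huisken's monotonicity formula with the Brakke regularity theorem (Theorem \ref{Brakke}) already introduced in this section. Without loss of generality, by parabolic rescaling we may assume $r_0 = 1$, so the hypotheses become $|A| \leq 1$ on $M_0 \cap B(x_0,1)$, $M_0$ is graphical in $B(x_0,1)$, and we seek a uniform $|A| \leq \epsilon^{-1}$ on $M_t \cap B(x_0, \epsilon)$ for $t \in [0, \epsilon^2]$.

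First, I would show that the Gaussian density ratios at time $0$ are uniformly close to $1$ at points near $x_0$ on small scales: because $M_0 \cap B(x_0,1)$ is a graph with $|A| \leq 1$, it is $C^1$-close to the tangent plane $T_{x_0} M_0$ in a ball of definite size $r_1 \leq 1$. An explicit computation with the backwards heat kernel gives
\begin{equation}
\Theta(M_0, y, r) \leq 1 + \eta(r_1)
\end{equation}
for all $y \in B(x_0, r_1/2)$ and $0 < r < r_1/2$, where $\eta \to 0$ as $r_1 \to 0$. Next, to propagate this to positive times, I would use Ecker's localized monotonicity formula, which introduces a spacetime cutoff function $\varphi$ supported near $(x_0, 0)$ so that $\int \varphi \, \rho_{(X,r)} \, d\mathcal{H}^n$ is monotone up to error terms controlled by $\|\nabla \varphi\|_\infty$ and $\|\partial_t \varphi\|_\infty$. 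Choosing $\varphi$ suitably — for instance, radially symmetric in $B(x_0, 3/4)$ and identically $1$ on $B(x_0, 1/2)$ — yields that for every $\eta' > 0$ there exists $\epsilon > 0$ so that $\Theta(M_t, X, r) < 1 + \eta'$ for all spacetime points $X = (y,t)$ with $y \in B(x_0, 2\epsilon)$ and $t \in [0, \epsilon^2]$, and all scales $0 < r < 2\epsilon$.

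Finally, choosing $\eta' \leq \epsilon_0(n)$ where $\epsilon_0$ is the Brakke regularity constant, Theorem \ref{Brakke} (or equivalently Lemma \ref{refinement}) applies at every spacetime point in $B(x_0, \epsilon) \times [0, \epsilon^2]$ and yields $|A|(y,t) \leq C/\rho(X, U) \leq C/\epsilon$ on this region; after relabeling constants this gives the claimed bound $|A| \leq \epsilon^{-1}$ (possibly after further shrinking the $\epsilon$ in the statement).

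The main obstacle is the localized monotonicity step: one needs to control how the far-away portion of $M_t$ (outside $B(x_0, r_0)$, where we have no curvature information) can influence the local density ratios. The standard remedy is Ecker's cutoff trick — the weight $\rho_{(X,r)}$ decays Gaussian-fast, so for $r$ much smaller than $r_0$ and for short time, the contribution from outside a fixed ball is exponentially small. Making this rigorous requires choosing the cutoff $\varphi$ and the scales $r_1, r_2, \epsilon$ carefully, with the hierarchy $\epsilon \ll r_1 \ll r_0$ so that the initial near-flatness, the monotonicity error, and the Brakke threshold all fit together simultaneously.
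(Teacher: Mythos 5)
The paper does not prove this result; it cites Chen--Yin \cite{CY}, Theorem 7.5, whose argument is a Perelman-style point-picking/contradiction argument. Your proposal takes a genuinely different and also well-known route, closer in spirit to Ecker's regularity theory: bound the Gaussian density ratio near $(x_0,0)$ at small scales using the initial curvature bound, persist this to short forward times via a localized monotonicity formula, and close with Brakke--White regularity. This is a legitimate alternative proof of pseudolocality and in fact fits more naturally with the toolkit assembled in Section 2, since it reuses Theorem \ref{Brakke} and Huisken's monotonicity. What Chen--Yin's approach buys is that it extends cleanly to flows in curved ambient manifolds and yields the version with gradient/higher-derivative estimates; what your route buys is brevity and transparency in $\mathbb{R}^{n+1}$ using precisely the Brakke-regularity machinery the paper already relies on.

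There is one point you should tighten, because as written it is a small gap. You begin by estimating the \emph{unlocalized} density $\Theta(M_0,y,r)$, and only afterwards introduce a cutoff to ``propagate'' the estimate, worrying about exponentially small far-away contributions. With no information about $M_0$ outside $B(x_0,r_0)$ (many distant sheets are allowed), the unlocalized $\Theta(M_0,y,r)$ need not be close to $1$ for any $r>0$. The correct move is to use Ecker's \emph{localized} density $\Theta_\rho$, defined with the cutoff $\varphi^{(\rho)}_{X_0}(x,t)=\bigl(1-(|x-x_0|^2+2n(t-t_0))/\rho^2\bigr)_+^3$, from the very beginning: this $\varphi$ is a subsolution of the backwards heat operator on $M_t$, so the localized density is monotone in the scale $r$ with \emph{no} error terms to control, and the far-away sheets are simply invisible because $\varphi^{(\rho)}$ is compactly supported. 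The near-flatness of $M_0$ in $B(x_0,r_1)$ then bounds $\Theta_\rho(M_0,y,\sqrt{t_0})\leq 1+\eta$, monotonicity gives the same bound at all smaller scales, and the localized version of Brakke--White regularity (Ecker's formulation, or equivalently applying Theorem \ref{Brakke} with $U$ a small parabolic ball as the paper's own statement already allows) finishes the argument. With that substitution your outline is correct; the hierarchy of scales you identify at the end is exactly what is needed.
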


 If there are additional initial bounds for $|\nabla A|$ and $|\nabla^2 A|$ then we obtain bounds on $|\nabla A|$ and $|\nabla^2 A|$ for short time using Theorem \ref{pseudolocality} in combination with an application of \cite[Lemma 4.1, 4.2]{BrendleHuisken16}. In Section 4, Theorem \ref{pseudolocality} will be used in combination with the evolution equations of $H$ and the shape operator, which involve diffusion terms which are second order in the curvature. As in \cite{Mramor}, we will use pseudolocality to ensure that the flow of two-convex domains in a ball remains two-convex for some time in a slightly smaller ball so long as some curvature control is assumed near the boundary. This will be explained further in Section 4.

\section{Proof of Theorem 1.1}
In this section we show Theorem \ref{first theorem}, i.e.\ that there are neighborhoods of some embedded, nonconvex intervals that smoothly shrink to round points under the mean curvature flow.
$\medskip$
  
We will construct supersolutions and subsolutions to the flow, which will be barriers for the flow by Proposition \ref{subsolution proposition}. The point is that we will construct these barriers explicitly, and this will give us enough control over the true flow in order to apply Proposition \ref{smooth} and obtain the statement.
\medskip

Fix a length $L$ throughout this argument, and let $\Sigma(n,L)$ be the set of smooth embedded intervals with length bounded by $L$. Recall that $T_r(\gamma)$ is the boundary of the radius-$r$ neighborhood of $\gamma$.
$\medskip$

Let $\gamma \in \Sigma(n,L)$, such that $|A|, |\nabla A|\leq C$ on $\gamma$, where $C$ is to be chosen sufficiently small later.

Let $\gamma$ be arclength parametrized over an interval of length $L$, which we fix to be $I \subset \mathbb{R}^{n+1}$, defined as
$$I:= \{(s,0,\dots, 0)\,|\, s \in [-L/2,L/2]\}$$

Let $\Lambda:=T_r(I)$ be the boundary of the radius $r$-neighborhood of the interval $I$, for some $r$ to be chosen small later. Similarly, let $\Gamma := T_r(\gamma)$.  
$\medskip$

The idea is that for $r$ small enough (relative to the curvature of $\gamma$), the flow of a smooth surface $C^1$-close to $\Gamma$ should be closely approximated by the flow of a smooth convex surface $C^1$-close to the convex tube $\Lambda$, as seen in Figure \ref{Figure tube}, after a standard map from $\Lambda$ to $\Gamma$. This mapping is given by extending to tubular neighborhoods the map from $I$ to the curve $\gamma$. Since the convex surface close to $\Lambda$ will shrink to a point, irrespective of its length $L$, it is reasonable to expect that the flow of a surface near $\Gamma$ will too. However, to make this argument work, we must choose the curvature bound $C$ on $\gamma$ sufficiently small, as well as a sufficiently small $r$ and close enough surfaces to $\Lambda$ and $\Gamma$. 
$\medskip$

\begin{figure}
\centering
\includegraphics[scale = .5]{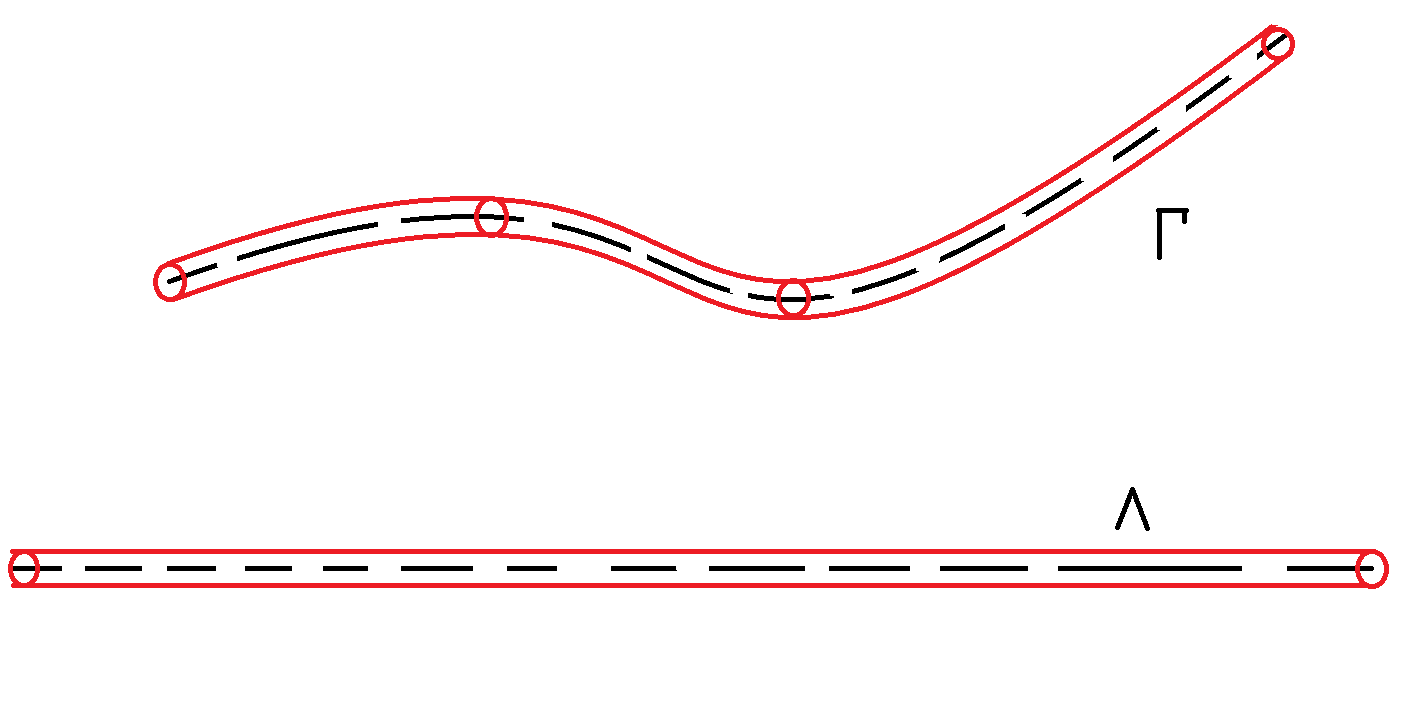}
\caption{}
\label{Figure tube}
\end{figure}

 Let $0<r<r^*$, to be chosen sufficiently small later. Define
$$\mathcal{I}:= \{(s,0, \dots, 0) \,|\,s \in [-r^* -L/2, r^* +L/2]\}$$
By abuse of notation, we will occasionally conflate $\mathcal{I}$ with $[-r^* -L/2, r^* +L/2 ]$.

First, we define $\tilde{\gamma}$ as an extension of $\gamma$ by straight line segments. That is, we define 
\[
    \tilde{\gamma}(s):= 
\begin{cases}
   \gamma(s),& \text{if } s \in [-L/2,L/2]\\
    \gamma(-L/2) +(s+L/2) \gamma'(-L/2),              & \text{if } s \in [-r^* -L/2,-L/2]\\
    \gamma(L/2) +(s-L/2)\gamma'(L/2),                 & \text{if } s \in [L/2,r^* +L/2]
\end{cases}
\]

For $s \in \mathcal{I}$, find a continuously varying set of orthonormal basis vectors 
$$\{\tilde{e}_2(s), \dots, \tilde{e}_{n+1}(s)\}$$ 
in the normal bundle of $\tilde{\gamma}$, and define for $t \leq r^*$, 
$$\tilde{\psi}(s,t\mathbf{a}) := \tilde{\gamma}(s)+t\sum_{i=2}^n a_i \tilde{e}_i(s)$$
where $s  \in\mathcal{I}$ and $\mathbf{a} = (a_2, \dots, a_n) \in \mathbb{R}^{n}$ with $|\mathbf{a}|=1$. This defines $\tilde{\psi}$ on the $r^*$-tubular neighborhood of $\mathcal{I}$ and its image is the $r^*$-tubular neighborhood of the curve $\tilde{\gamma}$. We may choose $C$ and $r^*$ small enough, depending on $L$, so that $\tilde{\gamma}$ is a $C^1$-diffeomorphism from the $r^*$-tubular neighborhood of $\mathcal{I}$ to the $r^*$-tubular neighborhood of $\tilde{\gamma}$. Recall that $\Lambda := T_r(I)$. With a small enough choice of $C, r^*$, $\tilde{\psi}(\Lambda)$ is an embedded $C^1$ hypersurface. 
\medskip

By construction, 
\begin{equation}\label{equation gamma tube}
\tilde{\psi}(\Lambda) = T_r(\gamma)
\end{equation}

Now, let $\bar{\gamma}$ denote a smooth arclength parametrized curve defined on $I$, such that $|A|, |\nabla A| \leq C$, $\bar{\gamma}(s) = \gamma(s)$ for $s \in [0,L]$, and $\bar{\gamma}$ is $\eta$-close in $C^1$ to $\tilde{\gamma}$, for $\eta$ to be chosen later.
\medskip

For $s \in \mathcal{I}$, find a smoothly varying set of orthonormal basis vectors $\bar{e}_2(s), \dots, \bar{e}_{n+1}(s)$ in the normal bundle of $\bar{\gamma}$. Then, for $t\leq r^*$, define
 $$\bar{\psi}(s,t\mathbf{a}) := \bar{\gamma}(s)+t\sum_{i=2}^n a_i \bar{e}_i(s)$$
where $s  \in \mathcal{I}$ and $\mathbf{a} = (a_2, \dots, a_n) \in \mathbb{R}^{n}$ with $|\mathbf{a}|=1$.

By (\ref{equation gamma tube}), we find that $\bar{\psi}(\Lambda)$ approaches $T_r(\gamma) = \tilde{\psi}(\Lambda)$ in $C^1$-norm as $\eta \to 0$. For sufficiently small $C, r^*$ as well as small enough $\eta$, we find that $\bar{\psi}(\Lambda)$ can be found arbitrarily close to $T_r(\gamma)$ in $C^1$. Note that the choices of $\tilde{e}_i, \bar{e}_i$ are irrelevant as $\Lambda$ is rotationally symmetric with respect to the $x_1$-axis coinciding with the axis of $I$.

 \begin{lem}\label{lemma embedded tube} For each $L$, there exists $C, r^*$ with the following significance: For each $\gamma \in \Sigma(n,L)$ with $|A| \leq C$ and each $0<r<r^*$ and $\epsilon \ll 1$, there exists a smooth embedded strictly mean convex hypersurface $\bar{M}$ $\epsilon$-close in $C^1$ to $\Gamma = T_r(\gamma)$.

 \end{lem}
 \begin{proof}

For fixed $L$, we may choose $C\leq \frac{\pi}{L}$. This means the maximal radius of curvature of $\gamma$ is $\frac{L}{\pi}$. By integrating the curvature bound $|A|\leq C$ and using the fact that the radius of curvature is bounded by $\frac{L}{\pi}$, we have that for $0<r<\frac{L}{2\pi}$, $T_r(\gamma)$ is embedded. 
\medskip

For $0<r<r^*$, let $M(r)$ be a smooth convex hypersurface which is rotationally symmetric around the $x_1$-axis (i.e., the axis coinciding with $I$) and which is $\eta$-close in $C^{1,1}$ to $\Lambda:= T_r(I)$. Define $\Lambda^- \subset \Lambda$ to be the subset consisting of $x \in \Lambda$ such that $x \in N\gamma$, the normal bundle of $\gamma$. That is, $\Lambda^-$ is the smooth subset of $\Lambda$ not including the convex caps.  We may choose $M(r)$ so that $M(r)$ is $\eta$ close in $C^{\infty}$ to $\Lambda^-$. This is possibe since $\Lambda$ is $C^{1,1}$ and $\Lambda^-$ is smooth. Now, let 
$$\bar{M}:= \bar{\psi}(M(r))$$

We may choose $C$ and $r^*$ small enough, with $\eta \ll C,r^*$, so that $\bar{M}$ is mean convex. Notice that $\bar{M}$ is convex near the boundary of the interval $\gamma$. Let $x \in \bar{\psi}(\Lambda^-)$, and let the smallest principal curvature of $\bar{M}$ be $\kappa_1$. Choosing $r^*$ small, we get that $\kappa_1(x)\geq -2C$. Also, choosing $C, \eta$ small enough, the second-smallest principal curvature satisfies $\kappa_2(x)\geq \frac{1}{2r^*}$. This implies that $\bar{M}$ is mean convex.
$\medskip$ 

Now, choosing $\eta$ small enough, we have that $\bar{M}$ is $\epsilon/2$-close in $C^1$ to $\bar{\psi}(\Lambda)$. Since $\bar{\psi}(\Lambda)$ is $\eta$-close in $C^1$ to $\Gamma = T_r(\gamma) = \tilde{\psi}(\Lambda)$, by choosing $\eta$ small enough, we have that $\bar{M}$ is $\epsilon$-close in $C^1$ to $\Gamma$. Indeed, since $T_r(\gamma)$ is embedded, $\bar{M}$ is embedded as well.

 \begin{figure}
 \centering
\includegraphics[scale = .33]{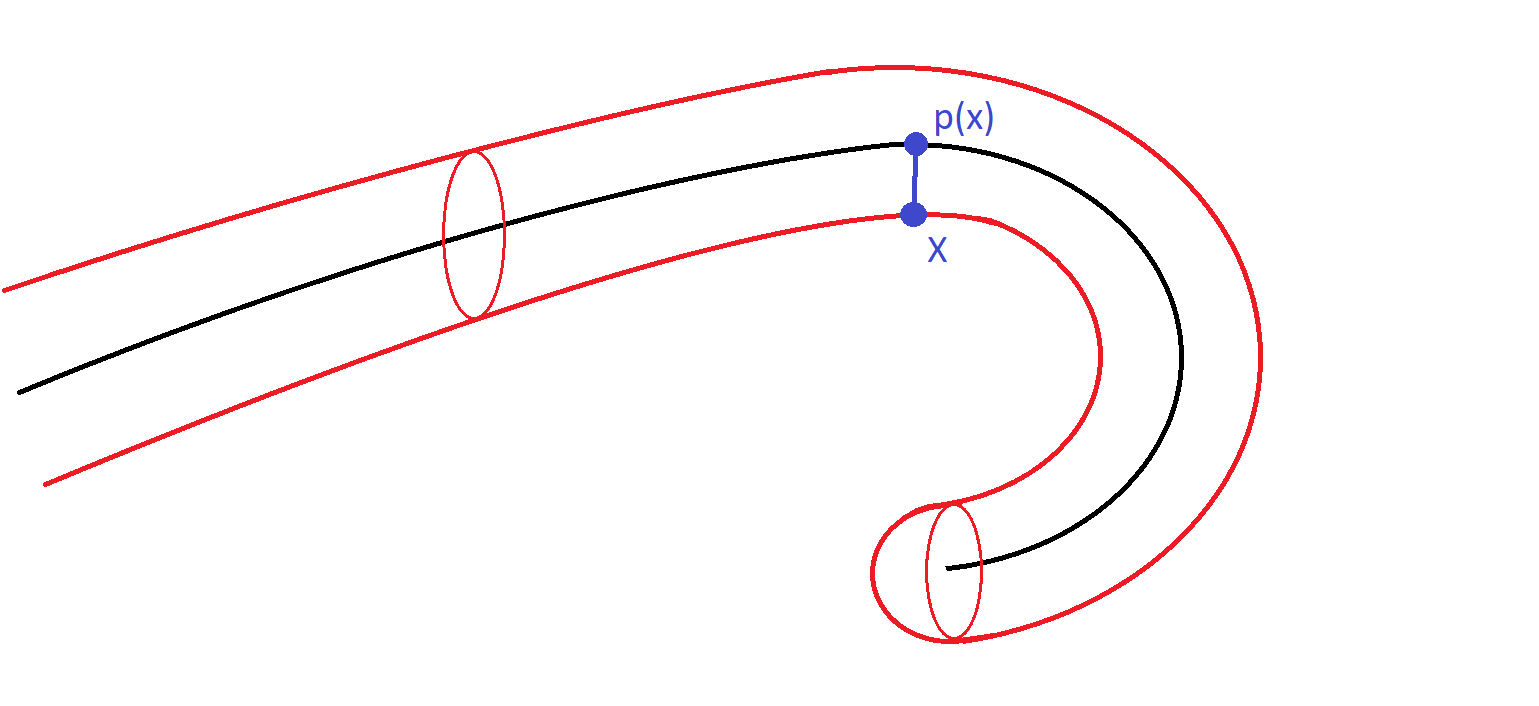}
\caption{A depiction of $T_r(\gamma)$. The point $x \in T_r(\gamma)$ and $p(x) \in \gamma$ such that $x-p(x)$ belongs to the normal bundle of $\gamma$ and $|x - p(x)|=r$.}
\label{tube diagram}
\end{figure}

\end{proof}

Next, we describe flows of inner and outer barriers for smooth perturbations of $\Lambda$ (that is, $M(r)$ from Lemma \ref{lemma embedded tube}), which we denote by $A_t$ and $B_t$, respectively. These will then be mapped, via $\bar{\psi}$, to our intended inner and outer barriers for the flow of $\bar{M}$. These objects furthermore depend on a choice of $\epsilon \ll 1$, but this can be chosen afterwards much smaller than any other chosen parameter so that its effect is negligible for any of the ensuing estimates.
$\medskip$

We will evolve the barrier $A_t$, with initial condition $M(r)$, by a slightly sped up mean curvature flow of $M(r)$. That is, for $\delta>0$, to be chosen later, let $A_t$ evolve by the flow:
\begin{equation}
\frac{dF_\delta}{dt} = e^{\delta}H \nu
\end{equation}
with initial condition $A_0 = M(r)$. Similarly, for $\delta>0$, let $B_t$ evolve by the flow:
\begin{equation}
\frac{dF_\delta}{dt} = e^{-\delta}H \nu
\end{equation}
with initial condition $B_0 = M(r)$. 
These two flows are just mean curvature flow on slightly faster or slower time scales, respectively. We will show that with appropriate choices of $C$ and $r^*$, $\bar{\psi}(A_t)$ and $\bar{\psi}(B_t)$ are a supersolution and subsolution to the flow, respectively.

$\medskip$

Now, we will show that for each $\delta>0$, there is an appropriate choice of small enough $C,r$ such that $\bar{A}_t:=\bar{\psi}(A_t)$ and $\bar{B}_t:= \bar{\psi}(B_t)$  will be appropriate inner and outer barriers for the flow $\bar{M}_t$. Here, $\bar{M}_t$ is the mean curvature flow with initial condition $\bar{M}$ constructed in Lemma \ref{lemma embedded tube}. Let $H_{\bar{A}_t}$ and $H_{\bar{B}_t}$ be the mean curvatures of $\bar{A}_t$ and $\bar{B}_t$. Recall that $A_t$ and $B_t$ are rotationally symmetric about the $x_1$-axis, and recall that $\bar{\psi}$ is defined to respect rotational symmetry about the $x_1$ axis. The image of a rotationally symmetric surface under $\bar{\psi}$ will be rotationally symmetric with respect to $\gamma$. 
By a standard calculation of the mean curvatures of $\bar{A}_t, \bar{B}_t$, recalling that $|A|, |\nabla A| \leq C$ for $\gamma$, 
\begin{equation}
|H_{\bar{A}_t}(\psi(p), t) - H_{A_t}(p,t)| < K(n)C
\end{equation} 
for some dimensional constant $K(n)$. 
\medskip

By definition of $\bar{\psi}$, we have that
$$(1-K(n)C)|v| \leq |d\bar{\psi}(v)| \leq (1+K(n)C)|v|$$
where $\bar{\psi}$ is defined (recall that this depends on $r^*$).
Note that this estimate also depends on $r$, but this is suppressed, since $r$ can be chosen smaller than a fixed fraction of $1/L$.
\medskip

We may also assume without loss of generality that $H_{\bar{A}_t}(\psi(p),t) > 1$ for all points and times by choosing $r$ small enough, and we find $\delta_1>0$ depending on $C$ so that 
$$K(n)C < e^{-\delta_1}H_{\bar{A}_t}(\psi(p),t)$$

Now, we may show that $\bar{A}_t$ is a supersolution to mean curvature flow, after an appropriately small choice of $C, r^*, \delta$:
\begin{align*}
\Big|\frac{d\bar{A}_t}{dt}\Big| &= \Big|d\bar{\psi}\big(\frac{dA_t}{dt}\big)\Big|=  |d\bar{\psi}( e^{\delta}H_{A_t}\nu_{A_t})| \\
&> (1-K(n)C)|e^{\delta} H_{A_t}| |\nu_{A_t}| = (1-K(n)C)|e^{\delta}H_{A_t}|\\
&= (1-K(n)C)|e^{\delta}H_{\bar{A}_t}+e^{\delta}(H_{A_t} - H_{\bar{A}_t})|\\
&\geq (1-K(n)C)e^{\delta}\big||H_{\bar{A}_t}|- |H_{A_t} - H_{\bar{A}_t}| \big|\\
&\geq (1-K(n)C)e^{\delta}\big(|H_{\bar{A}_t}|-K(n)C\big)\\
\intertext{Applying the choice of $\delta_1$ as above,}
& > (1-K(n)C)e^{\delta}(1-e^{-\delta_1})|H_{\bar{A}_t}|\\
\intertext{For each $\delta>0$, we pick a small enough $C$, and hence a large enough $\delta_1$, such that $(1-K(n)C)e^{\delta}(1-e^{-\delta_1})\geq 1$ and so}
&> |H_{\bar{A}_t}|
\end{align*}

 Recalling that $\bar{A}_t$ is mean convex, this means that $\bar{A}_t$ is a supersolution for the flow. Thus, $\bar{A}_t$ is an inner barrier for $\bar{M}_t$ by Proposition \ref{subsolution proposition}. We may do the same for $\bar{B}_t$. Hence, for each $\delta>0$, we may find a small enough $C,r$ so that $\bar{A}_t, \bar{B_t}$ are a supersolution (resp. subsolution) and thus an inner (resp. outer) barrier for $\bar{M}_t$. 

$\medskip$

With these barriers in hand we now need to understand how they behave. Consider the following statement, which is immediate: 
\begin{prop} Let $T_r$ be the extinction time of the round cylinder of radius r. Then, for every $L>0$, $r \ll L$, and all $\eta_1\ll r$, there exists $r_1$ and $T^* = T^*(r, r_1, L, \eta_1) < T_r$ so that $M_t$ is $\eta_1$-close in the $C^2$ topology to a round sphere of radius $r_1$ by time $T^*$, with $\eta_1 \ll r_1$. 
\end{prop}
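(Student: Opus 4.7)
The plan is to reduce the proposition to Huisken's theorem applied directly to the convex capsule $\Lambda(L,r)$, combined with an avoidance-principle comparison against an enclosing infinite cylinder.

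First, after the mild $C^2$ smoothing at the cap-to-cylinder join noted in footnote (2), $\Lambda(L,r)$ is a closed convex hypersurface in $\R^{n+1}$, so Huisken's theorem applies: the flow $\Lambda_t$ remains convex, shrinks to a single extinction point $x_0$ at a finite time $T_\Lambda$, and the rescaled family $\widetilde{\Lambda}_\tau = (2n(T_\Lambda - t))^{-1/2}(\Lambda_t - x_0)$ with $\tau = -\tfrac{1}{2}\log(T_\Lambda - t)$ converges smoothly in $C^\infty$ to the unit round sphere as $\tau \to \infty$. To obtain $T^* < T_r$, I would invoke the avoidance principle against the infinite round cylinder $C_r$ of radius $r$ about the segment's axis: under mean curvature flow this cylinder evolves as $C_{\rho(t)}$ with $\rho(t) = \sqrt{r^2 - 2(n-1)t}$ and extincts precisely at $T_r = r^2/(2(n-1))$. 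Since $\Lambda(L,r) \subset C_r$ strictly at time zero, avoidance together with the strong maximum principle forces $\Lambda_t \subsetneq C_{\rho(t)}$ for all $t$, hence $T_\Lambda < T_r$.

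Next, for the closeness statement, given $\eta \ll r$ I would pick $r_1$ with $\eta \ll r_1 \ll r$ (to be specified momentarily) and set $T^* := T_\Lambda - r_1^2/(2n)$, so that the sphere naturally approximating $\Lambda_{T^*}$ through Huisken's rescaling has radius exactly $r_1$ and center $x_0$. Since $r_1 < r$ and $T_\Lambda < T_r$, we have $T^* \in (0, T_\Lambda) \subset (0, T_r)$. By Huisken's $C^\infty$ convergence, at rescaled time $\tau(T^*) = -\log(r_1) + O(1)$ the rescaled surface $\widetilde{\Lambda}_{\tau(T^*)}$ is a normal graph over the unit sphere with graph function $\widetilde{f}$ satisfying $\|\widetilde{f}\|_{C^2(S^n)} \leq \delta(r_1)$, where $\delta(r_1) \to 0$ as $r_1 \to 0$. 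Undoing the rescaling, $\Lambda_{T^*}$ is a normal graph over the sphere of radius $r_1$ whose graph function (in the natural unit-sphere parametrization) is $f = r_1 \widetilde{f}$ with $\|f\|_{C^2} \leq r_1 \delta(r_1)$.

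The main obstacle is to make $r_1 \delta(r_1) \leq \eta$ hold simultaneously with $\eta \ll r_1$, which is an easy bookkeeping matter: fixing $r_1 = 100\eta$ (say) and then taking $\eta$ small enough that $\delta(100\eta) \leq 1/100$ yields $r_1\delta(r_1) \leq r_1/100 = \eta$, as required. The availability of such small $\eta$ is guaranteed because $\delta(r_1) \to 0$ by Huisken's smooth convergence; if one instead prefers to read "$C^2$-close" as referring to Cartesian derivatives on the ambient surfaces, then the same argument goes through after invoking Huisken's quantitative (exponential) decay rate of the traceless second fundamental form to pick up an extra factor needed to absorb the $1/r_1$ from dilation. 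Either packaging reduces the proposition to a direct application of Huisken's classical convergence theorem together with the cylinder comparison for the a priori upper bound on $T_\Lambda$.
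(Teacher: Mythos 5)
Your argument is essentially the intended one: the paper labels this proposition ``immediate'' and gives no proof, precisely because it is a direct consequence of Huisken's convergence theorem applied to the convex capsule $\Lambda(L,r)$ together with a cylinder comparison, which is exactly what you spell out. Two small points worth tightening. First, $\Lambda(L,r)$ is not \emph{strictly} inside $C_r$ at $t=0$ --- the cylindrical portion of the capsule lies on $C_r$ --- so to get $T_\Lambda < T_r$ you should either compare against $C_{r+\varepsilon}$ and let $\varepsilon\to 0$ to get $T_\Lambda \le T_r$, and then use the strong maximum principle (the compact flow cannot coincide with the noncompact cylinder, so for $t>0$ they separate strictly), or simply observe that the caps of $\Lambda$ contribute strictly more mean curvature near the ends. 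Second, your remark about the Cartesian versus angular $C^2$ norm is the real crux: the physical $C^2$ distance scales like $\delta(r_1)/r_1$, so one needs the rescaled decay rate $\delta(r_1) \sim r_1^{\beta}$ to have $\beta>1$; for the round sphere $S^n$ the slowest mode after removing translations and dilations decays like $e^{-(n+2)\tau}\sim r_1^{n+2}$, so $\beta = n+2 > 1$ for all $n\ge 1$, and in particular the Cartesian $C^2$ distance does tend to zero. Making that one numerical fact explicit closes the only genuine gap in the write-up.
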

Since for all small $\delta$, we may find a $C>0$ such that $\bar{A}_t$ and $\bar{B}_t$ are inner and outer barriers, we may choose $\delta$ small enough to apply Proposition \ref{smooth} up until time $T^*$, as in the above proposition. This follows because a choice of $\delta$ small pinches the flow $\bar{M}_t$ between $\bar{A}_t$ and $\bar{B}_t$. A choice of $\delta$ gives us a choice of $C$ small, as described above. Proposition \ref{smooth} gives that the mean curvature flow $\bar{M}_t$ will flow, without singularities, to a hypersurface at time $T^*$ that is $\eta_1$-close in $C^2$ to a round sphere for some radius $r_1$ large relative to $\eta_1$. This means $\bar{M}_t$ has become convex, and so by Huisken's theorem the surface will continue to flow to a round point. 
$\medskip$

We end this argument with a discussion of which choices of parameters work. Normalizing $r$ to be one, as $L$ gets larger, $T^*$ approaches $T_1$. In particular, the curvature of $\bar{M}_t$ at the time it becomes convex becomes larger. The $\delta$ necessary to use Proposition \ref{smooth} depends on $\epsilon_0$ and $C$ from the Brakke regularity theorem as well as curvature bounds on the inner and outer barriers through time $T^*$. The curvature bounds on the inner and outer barriers through time $T^*$ are uniform as $\delta \to 0$, so we may always choose $\delta$ small enough to apply Proposition \ref{smooth}.
$\medskip$

Positive lower bounds on $\delta$ to ensure $\bar{A}_t$ and $\bar{B}_t$ are supersolutions and subsolutions gives a lower bounds on $C$ for which Theorem \ref{first theorem} holds. 
$\medskip$

Putting this together, if we fix $L$ we obtain an $r_1$, which then implies an upper bound on $\delta$ depending on both $r_1$ and the constants from the Brakke regularity theorem. The upper bound on $\delta$ then implies an upper bound $C$ on $|A|, |\nabla A|$ for which the construction above holds.

\section{Proof of Theorem \ref{second theorem}}
In this section, we prove Theorem \ref{second theorem}. Recall the notation set in Theorem \ref{second theorem}. In our notation, $M$ is a smooth surface, and $\widetilde{M}$ is another smooth surface given by gluing in a ``spike'' to $M$ at a point $ p \in M$. By a ``spike'' at $p$, we mean a suitable perturbation of a tubular neighborhood of a curve orthogonal to $T_p M$. The general idea of this theorem is that if $M \in \overline{\Sigma}$, i.e. that $M$ flows to a round point, then $\widetilde{M} \in \overline{\Sigma}$ as well. This will be proven by showing that the flow $\widetilde{M}_t$ will by some time be sufficiently close in $C^2$ to $M$, without developing singularities, which ensures that $\widetilde{M} \in \overline{\Sigma}$. This will be proven using localized barriers and applications of Proposition \ref{smooth} and Lemma \ref{refinement}. 
$\medskip$

We will begin by analyzing the model case of attaching a ``spike'' to an $n$-dimensional graph with bounded geometry. The following is a lemma that controls the flow of surfaces that are nearly graphical. Note that, for $n \geq 2$, we may obtain such graphs easily by attaching a two-convex tube as in Buzano, Haslhofer, and Hershkovits \cite{BuzanoHaslhoferHersh16} to a large, rotationally symmetric region of an extremely large sphere and smoothly extending that by an asymptotically flat graph. The reasoning for the choices made in the conditions of the following lemma will be made clear throughout the proof. In the following lemma, $B^n$ will denote a ball in the $n$-dimensional hyperplane $E$.

\begin{lem}\label{nearly graphical lemma}
Fix $L, \epsilon, C_1, C_2 > 0$. For each $\delta_2, r>0$, let $f = f(\delta_2, r)$ denote a smooth graph over an $n$-dimensional hyperplane $E \subset \mathbb{R}^{n+1}$ with the following properties:
\begin{enumerate}
\item The graph of $f$ satisfies $|A|^2 \leq C_1$, has entropy bounded by $C_2$, and is strictly mean convex in $B^n(0,\epsilon)$,
\item $f \geq 0$ in $B^n(0,\epsilon)$,
\item the graph of $f$ is rotationally symmetric around the $x_{n+1}$-axis, orthogonal to $E$, so that $f(x) = g(|x|)$,
\item $g(0)=L$ is the unique maximum point and $g(|x|)$ strictly decreases for $0<|x|<\epsilon$,% and $g(0)$ corresponds to the point with maximal mean curvature,
\item $g$ is $\delta_2$-close in $C^2$ norm to identically zero in $B(0,\epsilon) \setminus B(0,r)$.
\end{enumerate}
 Let $M \subset \mathbb{R}^{n+1}$ be a smooth hypersurface that is $\delta_1$-close in $C^2$ to the graph of $f = f(\delta_2, r)$. Then for each $\epsilon_0 > 0$, there exist $\delta_1, \delta_2, r$ (in practice, $\delta_1 \ll r\ll \delta_2$) with $\max\{\delta_1, \delta_2, r\} < \epsilon_0$ such that the flow $M_t$, with initial condition $M_0 = M$, exists for all time. Moreover, there exists $\delta$ and $T_0 = T_0(\delta_1, \delta_2, r)$ and $T_1 = T_1(C,\epsilon)$ such that for $t \in [T_0, T_1]$, $M_t$ is $\delta$-close in $C^2$ to the hyperplane $E$ in $B(0,\epsilon/4)$, and $\delta, T_0 \to 0$ as $\epsilon_0 \to 0$.

\end{lem}
\begin{proof}

Let $\Gamma(f)_t$ denote the mean curvature flow with initial condition $\Gamma(f)_0$, where $\Gamma(f)_0$ is the graph of $f = f(\delta_2, r)$ for $\delta_2, r>0$. 
$\medskip$

To prove this lemma, we may assume that $\delta_1 = 0$ without loss of generality. In other words, if for any $\epsilon_0>0$, there exists $\delta_2, r$ such that $\Gamma(f)_t$ satisfies the conclusion of this lemma, then there exists $\delta_1$ such that the flow $M_t$, with initial condition $M_0$ $\delta_1$-close in $C^2$ to $\Gamma(f)_0$, also satisfies the conclusions of this lemma. This follows from continuity of the flow over compact time intervals under $C^2$ perturbations of the initial condition. That is, there must exist $\delta_1>0$ such that if $M_0$ is $\delta_1$-close in $C^2$ to $\Gamma(f)_0$, then $M_t$ is $\delta$-close in $C^2$ to $\Gamma(f)_t$ for $t \in [0,T_1]$. Here, the $\delta$ and $T_1$ are as in the conclusion of the lemma for $\Gamma(f)_t$. Hence, assuming the $\delta_1=0$ case, there exists $\delta_1>0$ such that $M_t$ satisfies the conclusions of the lemma as well.  
$\medskip$
\newpage

\noindent \textbf{$\delta_1 = 0$ Case:}

It is sufficient to prove that for each $\epsilon_0>0$, there exist $0<\delta_2, r < \epsilon_0$ such that each $f = f(\delta_2, r)$ satisfies the conclusion of the lemma. Existence of the flow is immediate. Indeed, by Ecker-Huisken's interior estimates for graphical flows \cite{EckerHuisken89, EckerHuisken91}, the mean curvature flow $\Gamma(f)_t$ will exist for all time and will continue to be graphical and rotationally symmetric. We need to show that there exists $\delta>0$ and $T_0, T_1$ such that $\Gamma(f)_t$ is $\delta$-close in $C^2$ to $E$ in $B(0,\epsilon/2)$ for $t \in [T_0, T_1]$, where $\delta, T_0 \to 0$ as $\epsilon_0 \to 0$. There always vacuously exists some $\delta$ for any choice of $\delta_2, r$ and $\Gamma(f)_t$, so it is enough to show that we may find $\delta$ such that $\delta \to 0$ as $\epsilon_0 \to 0$. 
$\medskip$

The idea is to use a bowl soliton as a barrier to show that $\Gamma(f)_t$ quickly becomes $C^0$ close to the hyperplane $E$. Since the bowl soliton intersects $\Gamma(f)_0$, we must control the intersection in order to use the bowl soliton as a barrier. We use pseudolocality, mean convexity, and the Sturmian principle to control the intersection points so that the ``tip'' of the graph of $f$ must stay disjoint from the translating bowl soliton for a sufficient time. We then upgrade $C^0$ closeness to $E$ to $C^2$ closeness using one-sided minimization, the Brakke regularity theorem, and Ecker-Huisken estimates on the flow of graphs. 
$\medskip$

Let $f_t$ be the graph of the profile curve at time $t$. By Angenent-Altschuler-Giga~\cite{Altschuler1995}, the number of critical points of $f_t$ will not increase in time. This means there is a unique maximum point of $f_t$ at $0$ for all time. 
$\medskip$

For a given $\epsilon$, we see by arguments as in \cite{Mramor} that there will be a period of time $[0,T_2]$ so that $\Gamma(f)_t$ will remain mean convex within the ball $B(0, \epsilon/2)$. We stress that $T_2$ is uniformly bounded from below for all sufficiently small $r, \delta_2$. 
$\medskip$

Specifically, this follows by an application of the strong maximum principle and pseudolocality. We may apply pseudolocality in $B(0,\epsilon) \setminus B(0,\epsilon/4)$ so that $\Gamma(f)_t$ remains strictly mean convex on the boundary of $B(0,\frac{\epsilon}{2})$ for $[0,T_2]$. By the strong maximum principle, $\Gamma(f)_t$ will remain strictly mean convex in $B(0,\epsilon/2)$. Similarly, by pseudolocality, we have that $f_t\geq 0$ in $B^n(0,\frac{\epsilon}{2})$ for $t \in [0,T_2]$.
$\medskip$

\noindent \textbf{$C^0$-close:}

Now, we will show that for any choice of $T^*_0$, $\delta^*$ we may choose $\delta_2, r< \epsilon_0$ sufficiently small so that $\Gamma(f)_t$ is $\delta^*$-close in $C^0$ to the hyperplane $E$ in $B(0,\epsilon/2)$ by time $T^*_0$. %Moreover, we $\delta^*, T_0^* \to 0$ as appropriate choices of $ \delta_2, r \to 0$. 
$\medskip$

We will prove this by comparison with a bowl soliton $\mathcal{B}$, which is a convex rotationally symmetric solution to mean curvature flow which translates at a constant speed depending on a scale factor. We may place the bowl soliton $\mathcal{B}$ so that it is graphical over the hyperplane $E$, rotationally symmetric about the $x_{n+1}$-axis, and so that it initially intersects $E$ outside $B^n(0,2r)$. Since $\Gamma(f)_t$ is $\delta_2$-close in $C^2$ to $E$ outside $B^n(0,2r)$, $\mathcal{B}$ intersects $\Gamma(f)_t$ outside $B^n(0,r)$, taking $\delta_2$ small enough relative to $r$. See Figure \ref{figure 6}. As a graph, the bowl soliton $\mathcal{B}$ has a unique maximum at the origin. The scale of the bowl soliton will be chosen sufficiently small later in the argument. That is, we may scale the bowl soliton $\mathcal{B}$ so that $\mathcal{B}$ intersects $E$ in a sphere of radius $s \in (2r, \frac{\epsilon}{2})$. A key point is that if $v$ is the speed of $\mathcal{B}$ as a translator, $v \to \infty$ as $s \to 0$. Now, we will use $\mathcal{B}_t$, the flow of $\mathcal{B}$, as a barrier for $\Gamma(f)_t$. Since both $\Gamma(f)_t$ and $\mathcal{B}_t$ are rotationally symmetric, we may apply the Sturmian principle of Angenent-Altschuler-Giga~\cite{Altschuler1995}. The Sturmian principle says that the number of intersections of the profile curves of rotationally symmetric hypersurfaces does not increase. Let the graph $b_t$ be the profile curve of $\mathcal{B}_t$.  Let $f_t$ denote the profile curve of the rotationally symmetric graph $f(\cdot, t)$. Note that $b_t, f_t$ are one-dimensional profile curves defined over an $x$-axis whose rotation around the $y$-axis give $\mathcal{B}_t, \Gamma(f)_t$, respectively.  Since $\Gamma(f)_t$ is mean convex in $B^n(0,\frac{\epsilon}{2})$ and $f_t$ remains graphical for all time, $f_t$ will be strictly decreasing in $[-\frac{\epsilon}{2}, \frac{\epsilon}{2}]$ until time $T_2$. 
$\medskip$

Let $x_1(0),x_2(0)$ denote the values on the $x$-axis of the two intersection points between $f_0$ and $b_0$. Let $x_1(t),x_2(t)$ denote the intersection points (if they exist) between $f_t$ and $b_t$ for later times $t>0$. By the Sturmian principle, the number of intersection points does not increase and only decreases at double zeros. So, the intersection points only fail to exist after the first time that $x_1(t) = x_2(t)$. By construction, $f_t(p) \leq b_t(p)$ for $p \in [x_1(t), x_2(t)]$ so long as $x_1(t) \neq x_2(t)$. Let $\bar{x}_1(t), \bar{x}_2(t)$ be the $x$-coordinates of the intersection points between $b_t$ and $f_0$ outside the interval $[-2r, 2r]$, for as long as there exist such intersections. Note that there will be other intersections of $b_1$ with $f_0$ for some $t>0$, but there are only two intersection points outside $[-2r, 2r]$ initially. Choose $r$ and the scale of the bowl soliton small enough so that there is no intersection between $b_t, f_0$ before time $t=T_2$. Then, if $\bar{x}_1(t), \bar{x}_2(t)$ exist and $\bar{x}_1(t) \neq \bar{x}_2(t)$, then $b_t$ has two intersection points with $f_t$ outside $[-2r, 2r]$ (i.e. $x_1(t), x_2(t)$ exist so $|x_1(t)|, |x_2(t)|>2r$) and $x_1(t) \neq x_2(t)$. This follows since $f_t$ is decreasing in $[-\frac{\epsilon}{2}, \frac{\epsilon}{2}]$ for $t \in [0,T_2]$ due to mean convexity, so if $b_t$ intersects $f_0$ at $\bar{x}_1(t), \bar{x}_2(t)$, there will exist intersection points $x_1(t) \neq x_2(t)$ between $f_t$ and $b_t$ in $[-\frac{\epsilon}{2}, \frac{\epsilon}{2}]$. Thus, $f_t(p) < b_t(p)$ for $p \in [-2r, 2r]$ for as long as $\bar{x}_1(t), \bar{x}_2(t)$ exist and $\bar{x}_1(t) \neq \bar{x}_2(t)$.
\medskip

\begin{figure}
\centering
\includegraphics[scale = .45]{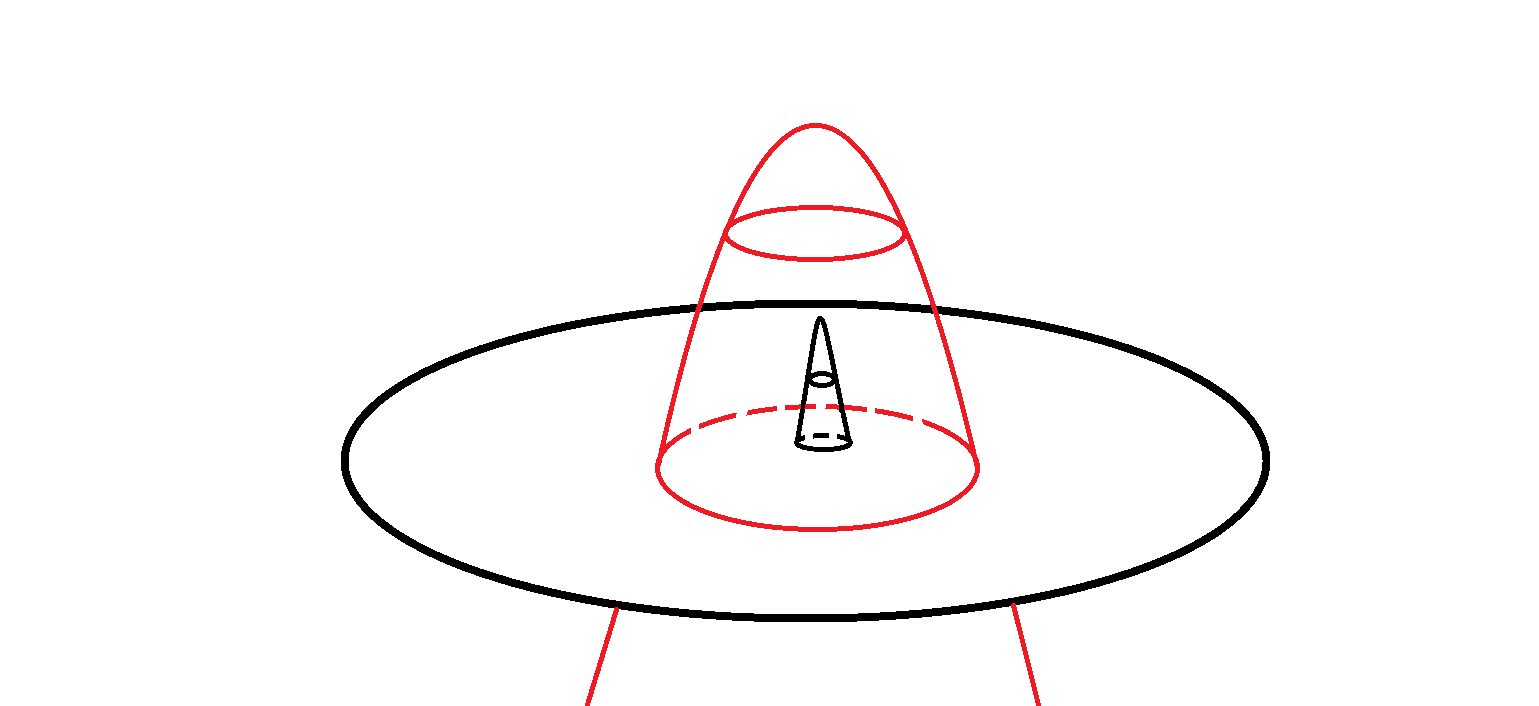}
\caption{}
\label{figure 6}
\end{figure}

By construction, $f_0(p) \leq \delta_2$ for $p \in [-\frac{\epsilon}{2}, \frac{\epsilon}{2}]\setminus [-2r, 2r]$. Since $f_t$ is decreasing in $[-\frac{\epsilon}{2}, \frac{\epsilon}{2}]$ for $t \in [0,T_2]$, $f_t(p) \leq \delta_2$ for $p \in [-\frac{\epsilon}{2}, \frac{\epsilon}{2}]\setminus [-2r, 2r]$. Pick $\delta^*>0$. We may then choose $r, \delta_2$ small enough relative to the scale of the bowl soliton $b_t$ so that $b_t$ intersects $f_0$ in $[-\frac{\epsilon}{2}, \frac{\epsilon}{2}]\setminus [-2r, 2r]$ until the time $T_0^* < T_2$ such that $b_{T_0^*}(0) =\delta^*$. Indeed, this follows from the fact that $f_0$ is $\delta_2$-close in $C^2$ to $0$ in $[-\epsilon, \epsilon]\setminus [-r, r]$, and the profile curve of the bowl soliton $b_t$ is strictly decreasing with a unique maximum at $0$. Recall that $b_t$ translates over the flow downwards at a constant speed $v$, with $v \to \infty$ as the scale $s \to 0$. Thus, $\bar{x}_1(t), \bar{x}_2(t)$ exist with $\bar{x}_1(t) \neq \bar{x}_2(t)$ for $t \in [0,T_0^*]$. By the argument in the previous paragraph, this ensures that $f_t(p) \leq b_t(p) \leq b_t(0) \leq \delta^*$ for $p \in [-2r, 2r]$. Thus, for each $\delta^*, T_0^*$, there exists $\delta_2, r< \epsilon_0$ small such that $f_t\leq \delta^*$ in $[-\frac{\epsilon}{2}, \frac{\epsilon}{2}]$ by time $T_0^* < T_2$. 
$\medskip$

For given $\delta_2, r < \epsilon_0$ sufficiently small, denote by $T_0^*(\delta^*)$ the first time before $T_2$ (which we recall is uniformly controlled) such that $\Gamma(f)_t$ is $\delta^*/2$-close in $C^0$ to the hyperplane $E$ in $B(0,\epsilon/2)$. We have that this exists by the discussion in the previous paragraph. 
$\medskip$

Since $\Gamma(f)_t$ is $\delta^*/2$-close in $C^0$ to $E$ in $B(0,\epsilon/2)$ by time $T_0^*(\delta^*)$, by the avoidance principle, there exists $T_1 = T_1(C_1,\epsilon)$ such that $\Gamma(f)_t$ is $\delta^*$-close in $C^0$ to $E$ in $B(0,\epsilon/2)$ for $t \in [T_0^*(\delta^*), T_1]$. 
$\medskip$

\noindent \textbf{$C^2$-close:}

Now, for each sufficiently small $\epsilon_0$ and a choice of $\delta_2, r < \epsilon_0$, we will find $T_0 \in [T_0^*(\delta^*), T_1]$ and $\delta$ such that $\Gamma(f)_t$ is $\delta$-close in $C^2$ to the hyperplane $E$ in $B(0,\epsilon/2)$ for $t \in [T_0, T_1]$, and $\delta, T_0 \to 0$ as $\epsilon_0 \to 0$. This will follow from an application of the one-sided minimization theorem and Brakke regularity (Theorem \ref{Brakke}) to find uniform curvature bounds, combined with $C^0$-closeness and an application of Lemma \ref{refinement} (see also \cite{Mramor}). Alternately we could use that such a graph must be close in density to a plane on large scales and apply monotonicity as in the proof of Theorem \ref{third theorem}, but the following argument applies generally to mean convex graphs. 
$\medskip$

By White's one-sided minimization, Theorem \ref{theorem one sided min} (using the fact that $\Gamma(f)_t$ is  mean convex for $t \in [0,T_1]$), and the $\delta^*$-closeness in $C^0$ found above, 
$$\text{Area}(\Gamma(f)_t \cap B(x,r)) \leq (1+2n\delta^*/r)\omega_n r^n$$
for $x \in E \cap B(0,\epsilon/4) \subset \mathbb{R}^{n+1}$, $r \in (0, \epsilon/4]$, and $t \in [T_0^*(\delta^*), T_1]$. Define 
$$A(t, r):= \text{Area}(\Gamma(f)_t \cap B(x,r))$$
Applying a standard integration by parts, for $x \in \Gamma(f)_t \cap B(0,\epsilon/4)$ and $t \in (T_0^*(\delta^*)+r_*^2, T_1]$, with $r^*>0$ to be chosen later, 
\begin{align*}
\Theta(\Gamma(f)_t, x, r_*)  &= \frac{1}{(4 \pi r_*^2)^{n/2}} \int_{s=0}^{\infty} \frac{\omega_n s^{n+1}}{2r_*^2} e^{\frac{-s^2}{4r_*^2}}\frac{A(t-r_*^2, s)}{\omega_n s^n}\,ds\\
&= \frac{1}{(4 \pi r_*^2)^{n/2}} \int_{s=0}^{\epsilon/4} \frac{\omega_n s^{n+1}}{2r_*^2} e^{\frac{-s^2}{4r_*^2}}\frac{A(t-r_*^2, s)}{\omega_n s^n}\,ds\\ 
&\quad\,+\frac{1}{(4 \pi r_*^2)^{n/2}} \int_{s=\epsilon/4}^{\infty} \frac{\omega_n s^{n+1}}{2r_*^2} e^{\frac{-s^2}{4r_*^2}}\frac{A(t-r_*^2, s)}{\omega_n s^n}\,ds\\
\intertext{Since $\Gamma(f)_0$ has entropy initially bounded by $C_2$, the second term can be bounded by $C(r_*, \epsilon, C_2)$, which goes to zero as $r_* \to 0$. Pick $r_*$ such that $C(r_*, \epsilon, C_2)< \epsilon_0/3$, where $\epsilon_0$ is the constant from Brakke regularity (Theorem \ref{Brakke}), and let $s^* = \frac{6n\delta^*}{\epsilon_0}$.}
&\leq \frac{1}{(4 \pi r_*^2)^{n/2}} \int_{s=0}^{\epsilon/4} \frac{\omega_n s^{n+1}}{2r_*^2} e^{\frac{-s^2}{4r_*^2}}\frac{A(t-r_*^2, s)}{\omega_n s^n}\,ds + C(r_*, \epsilon, C_2)\\
&\leq \frac{1}{(4 \pi r_*^2)^{n/2}} \int_{s=0}^{\epsilon/4} \frac{\omega_n s^{n+1}}{2r_*^2} e^{\frac{-s^2}{4r_*^2}}\frac{A(t-r_*^2, s)}{\omega_n s^n}\,ds + \epsilon_0/3\\
&\leq \frac{1}{(4 \pi r_*^2)^{n/2}} \int_{s=0}^{s_*} \frac{\omega_n s^{n+1}}{2r_*^2} e^{\frac{-s^2}{4r_*^2}}\frac{A(t-r_*^2, s)}{\omega_n s^n}\,ds\\
&\quad + \sup_{s \in [s^*, \epsilon/4]}\frac{A(t-r_*^2, s)}{\omega_n s^n}\frac{1}{(4 \pi r_*^2)^{n/2}} \int_{s=0}^{\infty} \frac{\omega_n s^{n+1}}{2r_*^2} e^{\frac{-s^2}{4r_*^2}}\,ds + \epsilon_0/3\\
\intertext{Applying the bound on area ratios, which holds for $\Gamma(f)_{t-r_*^2}$ for $t \in (T_0^*(\delta^*)+r_*^2, T_1]$,}
&\leq 1+2\epsilon_0/3 + \frac{1}{(4 \pi r_*^2)^{n/2}} \int_{s=0}^{s_*} \frac{\omega_n s^{n+1}}{2r_*^2} e^{\frac{-s^2}{4r_*^2}}\frac{A(t-r_*^2, s)}{\omega_n s^n}\,ds\\
\intertext{Applying the definition of $s_*$ and the entropy bound, we may bound the second term by $C(\delta^*, C_2)$, which goes to zero as $\delta^* \to 0$.}
&\leq 1+ 2\epsilon_0/3 +  C(\delta^*, C_2)
			  %&+  e^{-\frac{R^2}{4r_*^2}}\frac{A(t-r_*^2, R/4) }{(4 \pi r_*^2)^{n/2}} + \frac{1}{(4 \pi r_*^2)^{n/2}}\int_{|y-x|> R/4} e^{-\frac{|y-x|^2}{4r_*^2}} d(\mathcal{H}^n\lfloor \Gamma(f)_t)(y)
\end{align*}

By the argument for $C^0$ closeness, for any $\delta^*$, we may choose parameters $\delta_2, r$ such that $\Gamma(f)_t$ is $\delta^*$-close in $C^0$ to the hyperplane $E$ for $t \in [T_0^*(\delta^*), T_1]$. This means that we may choose parameters sufficiently small such that $C(\delta^*, C_2) < \epsilon_0/3$. Thus, by the calculation above, for $x \in \Gamma(f)_t \cap B(0,\epsilon/4)$ and $t \in (T_0^*(\delta^*)+r_*^2, T_1]$,
$$\Theta(\Gamma(f)_t, x, r_*) < 1+\epsilon_0$$
Note that we may choose $r_*, \delta^*$ arbitrarily small, resulting in a possibly smaller choice of parameters $\delta_2, r$. By Huisken monotonicity, we have that $\Theta(\Gamma(f)_t, x, r)<1+\epsilon_0$ for all $r \in (0, r^*]$. We may then apply Brakke regularity (Theorem \ref{Brakke}) to find that for $x \in \Gamma(f)_t \cap B(0,\epsilon/4)$, $\Gamma(f)_t$ satisfies
\begin{equation}\label{equation A bound}|A|(x,t) \leq \frac{C(n)}{r_*}\end{equation}
for $t \in (T_0^*(\delta^*)+r_*^2, T_1]$. 
$\medskip$

Now, for each $r_*, \epsilon^*\ll 1$, there exists $\delta^*(r_*), \delta_2, r$ such that $\Gamma(f)_t$ is $\epsilon^*$-close in $C^1$ to $E$ in $B(0,\epsilon/4)$ for $t \in (T_0^*(\delta^*(r^*))+r_*^2, T_1]$. This follows from choosing $\delta_2, r$ to find $\delta^*(r^*)$-closeness in $C^0$ to $E$ in $B(0,\epsilon/4)$ for $t \in  (T_0^*(\delta^*), T_1]$ combined with (\ref{equation A bound}), a uniform $C^2$ bound on $\Gamma(f)_t$ over the hyperplane $E$ in $B(0,\epsilon/4)$. Since the parameters $\delta_2, r$ may be chosen so that $\Gamma(f)_t$ is arbitrarily $\epsilon^*$-close in $C^1$ to $E$ in $B(0,\epsilon/4)$ for $t \in  (T_0^*(\delta^*(r^*))+r_*^2, T_1]$, we find that  for $x \in \Gamma(f)_t \cap B(0,\epsilon/4)$ and $t \in (T_0^*(\delta^*(r^*))+2r_*^2, T_1]$, $\Theta(\Gamma(f)_t, x, r_*) \to 1$ as $\delta_2, r \to 0$. 
$\medskip$

By Huisken monotonicity and Lemma \ref{refinement} (applied to the spacetime cylinder $B(0,\epsilon/4)\times \mathbb{R}$), we have that for $x \in \Gamma(f)_t \cap B(0,\epsilon/4)$ and $t \in (T_0^*(\delta^*(r^*))+2r_*^2, T_1]$, $|A|^2(x,t) \to 0$ as $\delta_2, r \to 0$. Combined with pseudolocality applied outside $B(0,\epsilon/4)$, we find the analogous statement for $C^2$ norm, which completes the proof of the lemma. The fact that $\delta, T_0 \to 0$ as $\delta_2, r \to 0$ (that is, as $\epsilon_0 \to 0$) follows using the fact that $T_0^*(\delta^*(r^*))\to 0$ as $\delta^*(r^*)\to 0$.

\end{proof}

Lemma \ref{nearly graphical lemma} above gives us a local model, which we will use to prove Theorem \ref{second theorem}. 
\begin{rmk}\label{remark 4.1}In Lemma \ref{nearly graphical lemma}, if we fix $R, C_1, C_2$, then we may in fact find $\delta_2>0$ such that for each $L>0$, there is $0<\delta_1, r \ll 1$ such that the conclusion of the lemma holds. The $\delta, T_0$ that are found in Lemma \ref{nearly graphical lemma} are independent of $L$ so long as $\delta_1, r$ are possibly chosen smaller. That is, varying the choice of $L$ in Lemma \ref{nearly graphical lemma} only affects our choice of $r, \delta_1$. This follows immediately from the proof of Lemma \ref{nearly graphical lemma}.
\end{rmk}

%\newpage

\noindent \textbf{Construction of $\widetilde{M}$:}

First we set notation and explain how to construct the desired initial condition $\widetilde{M}$ in Theorem \ref{second theorem}. Let $M$ be a surface in $\overline{\Sigma}$ (the set of surfaces which shrink to round points) and fix $p \in M$ and $L>0$. Fix a segment $\gamma$ satisfying the hypotheses of Theorem \ref{second theorem} with $r, \epsilon\ll 1$ to be chosen. Fix $N \gg 1$. Recall from the statement of Theorem \ref{second theorem} that we chose $N\gg 1$ at the outset to control how close the error between the spike and the tubular neighborhood of radius $r$ around $\gamma$. Let $T_{\epsilon}(\gamma)$ be the solid $\epsilon$-radius tubular neighborhood of $\gamma$. For any $r \ll 1$, let $\partial T_r(\gamma)$ denote a smooth surface which is $\frac{r}{N}$-close in $C^1$ to the boundary of the radius $r$ tubular neighborhood of $\gamma$ and which is graphical over $T_p M$ ($N$, as in the statement of the theorem is used to control how close $\widetilde{M}$ is to the tubular neighborhood of $\gamma$). Our construction of $\widetilde{M}$ uses Buzano-Haslhofer-Hershkovits' construction of two-convex gluings of tubular neighborhoods of curves  to two-convex surfaces (see \cite[Theorem 4.1]{BuzanoHaslhoferHersh16}). 
$\medskip$

The idea to construct $\widetilde{M}$ is to rescale $M$ around the point $p$ so that it is nearly flat, perturb the rescaled $M$ to be two-convex around $0$, and apply Buzano-Haslhofer-Hershkovits' construction around $0$. Rescale $M$ around $p$, moving $p$ to the origin $0$, and provisionally call the rescaled and perturbed surface $M^*$. We will smoothly perturb $M^*$, relabelling the perturbed surface $M^*$, so that $M^*$ is rotationally symmetric in $B(0,10)$ and has a sign on mean curvature in $B(0,2)$. Recall that $\gamma$ will be rescaled to a longer line segment $\gamma^*$ normal to $T_0 M^*$.
$\medskip$

Pick $\delta>0$ small enough so that if $\widetilde{M}_t$ is $\delta$-close in $C^2$ to $M$, then $\widetilde{M}_t \in \overline{\Sigma}$, i.e. $\widetilde{M}_t$ will flow smoothly to a round point. This is possible due to $\overline{\Sigma}$ being open in $C^2$ due to continuous dependence of the flow on smooth perturbations of the initial data. Note that if $\widetilde{M^*}_t$ is $\delta$-close in $C^2$, then by scaling, $\widetilde{M}_t$ is $\delta$-close in $C^2$ to $M$ as well. In Lemma \ref{nearly graphical lemma}, $\delta \to 0$ as $\delta_2, r \to 0$. Using Lemma \ref{nearly graphical lemma} (see Remark \ref{remark 4.1}) with a choice of $\epsilon, C_1=1$ and $C_2$ the entropy of $M$, we find $\delta_2>0$ such that for each $L>0$, there is $\delta_1, r>0$ such that the $M_t$ of Lemma \ref{nearly graphical lemma} is $\delta/2$-close in $C^2$ to $E$ in $B(0,1/2)$. Given this choice of $\delta_2$, rescale and perturb $M$ to $M^*$ so that $M^*$ is rotationally symmetric in $B(0,10)$, $M^*$ a sign on mean curvature in $B(0,2)$, and $M^*$ is $\delta_2$-close in $C^2$ to the hyperplane $T_0 M^*$ in $B(0,10)$.
$\medskip$

Then, applying Buzano-Haslhofer-Hershkovits' construction in $B(0,1)$, using the $r$ we found above, there exists a surface $\widetilde{M^*}$ such that $\widetilde{M^*} = M^* \cup \partial T_r(\gamma^*)$ (in the sense of Theorem \ref{second theorem}) and such that $\widetilde{M^*}$ is graphical over $T_0 M^*$ in $B(0,2)$ and satisfies the assumptions of the graph $f= f(\delta_2, r)$ of Lemma \ref{nearly graphical lemma} in $B(0,2)$. Rescaling $\widetilde{M^*}$ back to the scale of $M$, we find for sufficiently small $r, \epsilon \ll 1$ a (possibly immersed) surface $\widetilde{M}$ such that $\widetilde{M}  = M \cup \partial T_r(\gamma)$ outside $B(p,\epsilon)$ and $\widetilde{M} \cap T_{r}(\gamma)$ may be written as a graph over $B^n(p,\epsilon)$ with a sign on mean curvature, where $B^n(p,\epsilon) = T_p M \cap B(p,\epsilon)$ is a ball in the subspace $T_p M$. By construction, $\widetilde{M}$ will be $2$-convex, with respect to the inward pointing normal of $\partial T_r(\gamma)$, inside $B(p,\epsilon)$. We note that if $\gamma$ is oriented so that it points inward for $M$, then $\widetilde{M}$ will have negative mean curvature in $T_{R}(\gamma)$. This shows that the mean curvature of $\widetilde{M}$ will have a sign inside $T_{\epsilon}(\gamma)$. The picture to have in mind regarding the construction of $\widetilde{M}$ is Figure \ref{figure 7}.
$\medskip$

\begin{figure}
\centering
\includegraphics[scale = .45]{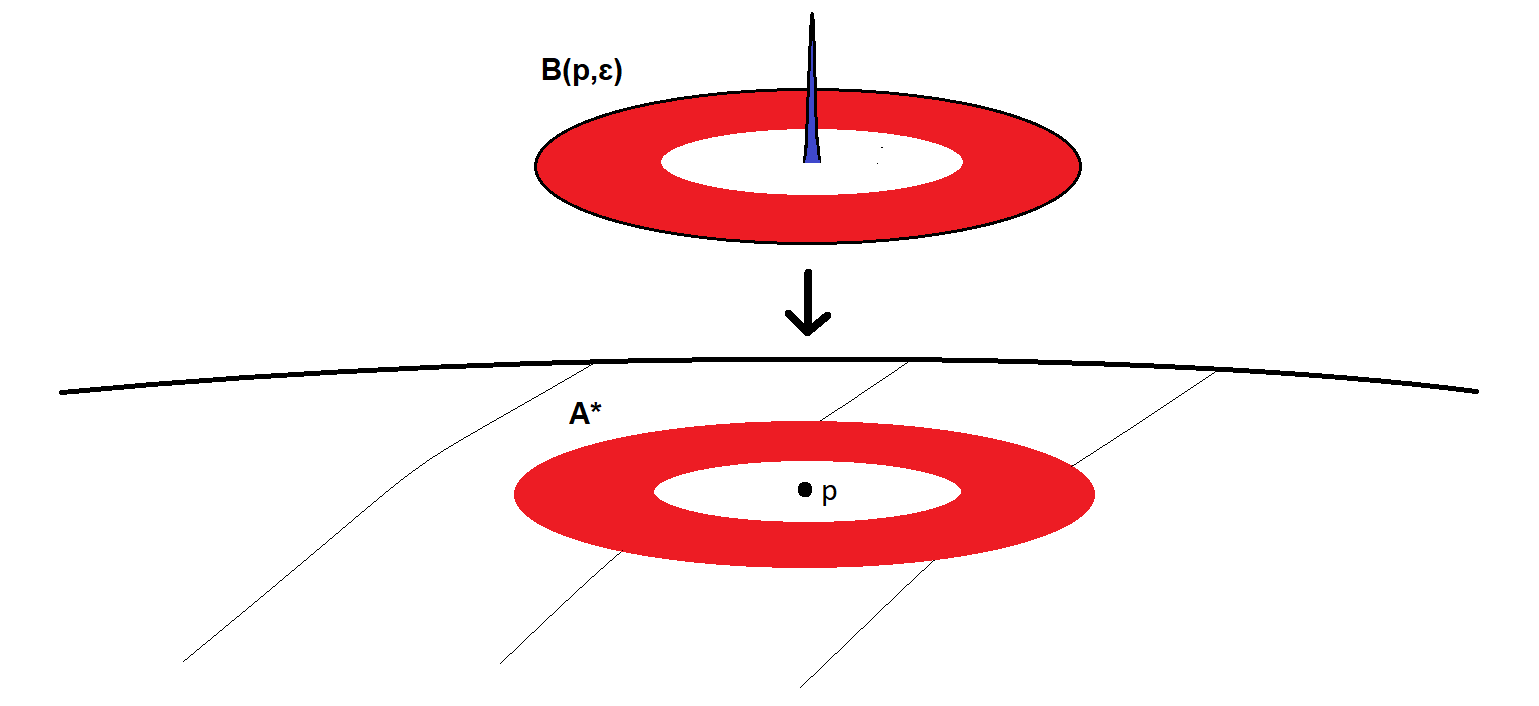}
\caption{}
\label{figure 7} 
\end{figure}

\noindent\textbf{Conclusion of proof:}

We will prove $\widetilde{M}$ satisfies the conclusions of this theorem for small enough choices of parameters $r, \epsilon$ (note that $\epsilon$ corresponds to the parameter $\delta_2$ for $M^*$). Recall that $\overline{\Sigma}$ is the set of smooth closed surfaces which flow to round points. By our choice of $\delta$, if $\widetilde{M}_t$ is $\delta$-close in $C^2$ to $M$, then $\widetilde{M}_t \in \overline{\Sigma}$. The conclusion of Theorem \ref{second theorem} follows from proving that, for $\widetilde{M^*}$ constructed above, there exists $T_0>0$ such that $\widetilde{M^*}_{T_0}$ is $\delta$-close in $C^2$ to $M^* \in \overline{\Sigma}$. It is enough to prove that there exists $T_0>0$ such that $\widetilde{M^*}_{T_0}$ is $\delta$-close in $C^2$ to the hyperplane $T_0 M^*$ in $B(0,1/4)$ for small enough choices of $r, \delta_2$ in the construction of $\widetilde{M^*}$. This is sufficient since pseudolocality ensures that $\widetilde{M^*}_{T_0}$ will be $\delta$-close in $C^2$ to $M ^*$ in the complement of $B(0,1/4)$ for $t \ll 1$. 
$\medskip$

We let $F$ be the graph defined over $B^n(0, 2)$ which coincides with $\widetilde{M^*}$. We will extend $F$ to graphs $F^+, F^-$ defined over all of $T_0 \widetilde{M^*}$, such that $F^+, F^-$ satisfy the assumptions of Lemma \ref{nearly graphical lemma}. We will then use $F^+, F^-$ as barriers in Proposition \ref{smooth} to control the flow $\widetilde{M^*}_t$. Consider an annulus $A^* = A^*(2, 3)$ in $T_0 \widetilde{M^*}$ (which corresponds to $A^*(2\epsilon, 3 \epsilon)$ without rescaling), and let $\nu(p)$ be the unit normal to $M^*$ at $0$.

$\medskip$

Then, consider the shifts $F \pm h \,\nu(p)$ by a small distance $h$, to be chosen later. We will extend $F \pm h \, \nu(p)$ to graphs  $F^+, F^-$ defined over $T_0 \widetilde{M^*}$ with bounded curvature and entropy. Modify $F^+, F^-$ in $A^*$ such that $F^+, F^-$ are disjoint in $A^*$ and such that $|F^+|, |F^-|$ is radially increasing over $B(3, 10)$ with $|F^+|, |F^-|>1$ over $T_0\widetilde{M^*}\setminus B(0,10)$. That is, $F^+, F^-$ are slight shifts of $F$ which are ``flared,'' as in Figure \ref{figure 8}. By design, the graphs $F^+, F^-$ do not intersect over all of $T_0 \widetilde{M^*}$. The graphs $F^+, F^-$ give domains $I_t$ and $O_t$ (whose boundaries are the graphs of $F^+, F^-$, respectively) to which we will apply Proposition \ref{smooth}. We will choose $h$ small enough to apply Proposition \ref{smooth}. So, $\partial I_t, \partial O_t$ do not intersect. 
$\medskip$

By pseudolocality and the comparison principle applied in $B(0,2)$, there exists a time $\widetilde{T}$, indepedent of $h, \delta_2, r$, such that $\partial I_t, \partial O_t$ and $\widetilde{M^*}_t$ are disjoint (as long as $\widetilde{M^*}_t$ exists) in $B(0,2)$ for $[0, \widetilde{T}]$. The setup is encapsulated in Figure \ref{figure 8}, where $h$ is drawn exaggeratedly large.

\begin{figure}[H]
\centering
\includegraphics[scale = .42]{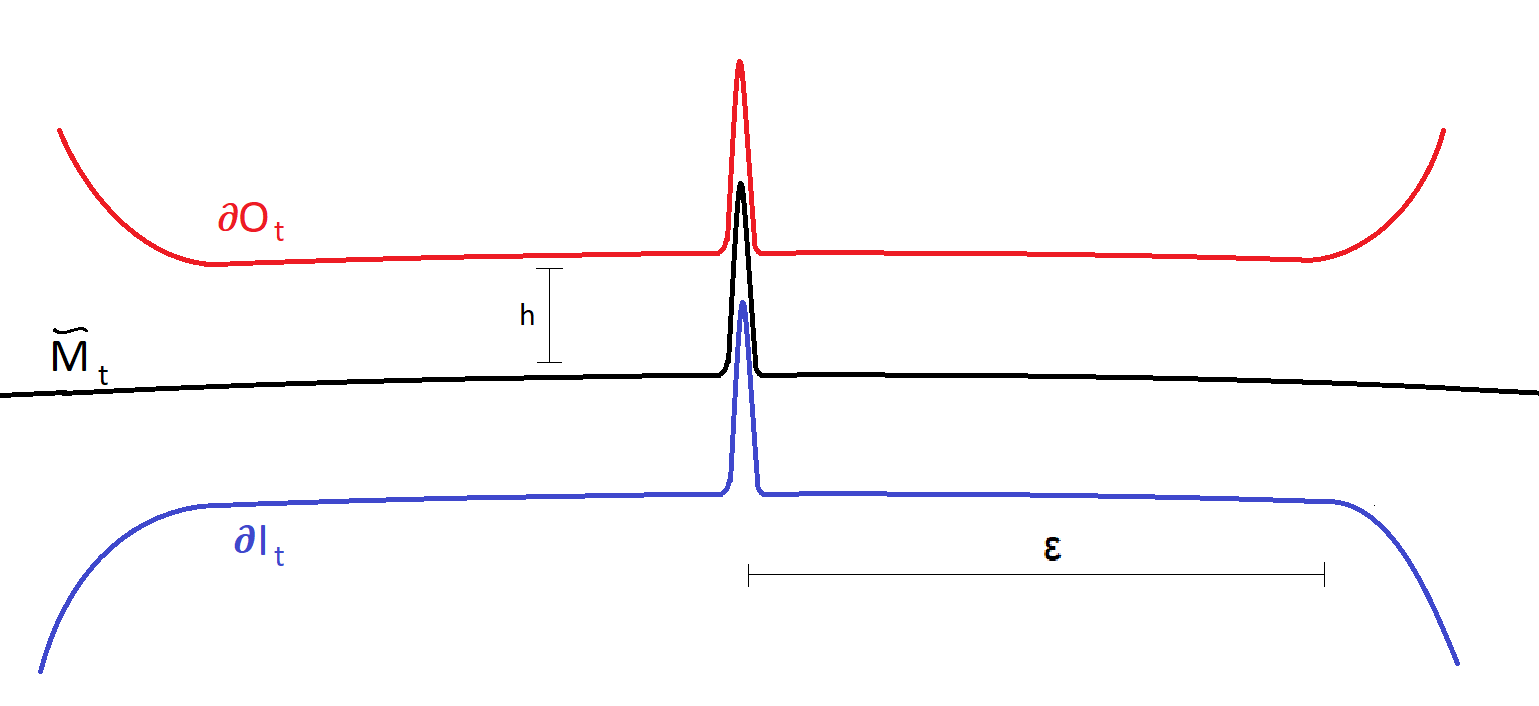}
\caption{}
\label{figure 8}
\end{figure}

Now, we will show that $\widetilde{M^*}_t$ will flow without singularities by applying Proposition \ref{smooth}. Proposition \ref{smooth} shows that for some $t \ll 1$, $\widetilde{M^*}_t$ will be $\delta$-close in $C^2$ to $M^*$ in $B(0,1/4)$. By construction, we may apply Lemma \ref{nearly graphical lemma} to the graphs of $F^+, F^-$, i.e. $\partial I_t$ and $\partial O_t$. This means that $\partial I_t, \partial O_t$ will be $\delta/2$-close in $C^2$ to the hyperplane $T_0 \widetilde{M^*}$ in $B(0,1/2)$ by time $T_0$. We may choose $h$ small enough to apply Proposition \ref{smooth} in $B(0,1/2)$. Moreover, by Lemma \ref{nearly graphical lemma}, we may take parameters $\delta_2, r$ small enough so that $T_0 \ll \widetilde{T}$ (taking the parameter $r$ smaller means $h$ needs to be taken smaller, but $\widetilde{T}$ is independent of $h$). This means that $\partial I_t$ and $\partial O_t$ form inner and outer barriers for the flow $\widetilde{M^*}_t$ in $B(0,1/2)$ and by Lemma \ref{nearly graphical lemma}, $\widetilde{M^*}_t$ will flow smoothly and will be $\delta$-close in $C^2$ to $M^*$ in $B(0,1/4)$ by time $T_0$.
$\medskip$

Note also that this application of Proposition \ref{smooth} shows that $\widetilde{M^*}_t$ will be $\delta$-close in $C^2$ to the hyperplane $T_0 M^*$ in $B(0,1/4)\setminus B(0,1/8)$ for $t \in [0,T_0]$. By pseudolocality, we have that for $t \in [0,T_0]$, outside $B(0, 1/4)$, $\widetilde{M^*}_t$ flows smoothly and is $\delta$-close in $C^2$ to $M^*\setminus B(0,1/4)$. To emphasize this use of pseudolocality, we have the following:
\begin{lem}\label{pl} Let $\widetilde{M}$, as defined above, satisfy $|A|^2 < C$ on $\widetilde{M} \setminus B(p, \epsilon)$. For each $0<C^*$, there exists $0 <T^*=T^*(C^*, \epsilon) < T$ such that $|A|^2 < (1+C^*)C$ on $\widetilde{M}_t \setminus B(p, \epsilon)$ for $t \in [0, T^*]$.
\end{lem} 

We choose $\epsilon$ and $r$ such that the time $T_0$ obtained from Lemma \ref{nearly graphical lemma} is smaller than $T^*$ obtained from Lemma \ref{pl}. That way, the previous discussion ensures that by time $T_0$, $\widetilde{M}_{T_0}$ will be $\delta$-close in $C^2$ and graphical over $M$ in $B^n(p,\epsilon/2)$, and Lemma \ref{pl} ensures that $\widetilde{M}_{T_0}$ will be close in $C^2$ to $M$ after possibly choosing $T_0$ smaller than $T$ from Lemma \ref{pl}. As discussed before, this implies that $\widetilde{M}_t \in \overline{\Sigma}$. Indeed, since $M$ is a surface in the interior of $\overline{\Sigma}$ and since $\widetilde{M}_{T_0}$ is $\delta$-close to $M$ in $C^2$ (depending on a sufficiently small choice of $\epsilon, r$), $\widetilde{M}_t$ will flow smoothly until it shrinks to a round point.
$\medskip$

\noindent \textbf{Iteration:}

Now, we see that we may iterate this construction. Since $\widetilde{M}_t \in \overline{\Sigma}$, we may apply Theorem \ref{second theorem} to $\widetilde{M}$ in place of $M$. That is, any surface $\widetilde{M}$ constructed by the above procedure must lie in $\overline{\Sigma}$, which is open in $C^2$, meaning that all small enough $C^2$ perturbations of $\widetilde{M}$ must shrink to round points as well. This is what allows for this construction to be iterated. We may also make all modifications of $M$ simultaneously as well, i.e. attach any finite number of spikes simultaneously using the above construction. Suppose we have already constructed $\widetilde{M}_t$ so that it contains a spike as above. Then, for any other choice of point $p'$, we may attach a new $\gamma'$ at $p'$. By picking $\epsilon'$ and $r'$ small enough for $\gamma'$, we may ensure that there is a time $T_0'$ such that the flow produces a perturbation of $M$ that is within the error $\delta_1$ used in Lemma \ref{nearly graphical lemma} by time $T_0'$. Thus, at time $T_0'$, the flow will look like an admissible perturbation of $\widetilde{M}$ in the sense of Lemma \ref{nearly graphical lemma} and so it must then shrink to a round point as $\widetilde{M}$ does. 
$\medskip$

\section{Proof of Theorem \ref{third theorem}} 
This proof follows the general idea of Theorem \ref{second theorem}. Unlike Theorem \ref{second theorem} though, the $\widetilde{M}$ constructed in this theorem is possibly not mean convex. To account for this difference, we will use the rotational symmetry of $\widetilde{M}$, and the subsequent local control we gain on it from Sturmian theory. In fact with suitable modifications, the approach of this proof provides an alternative method of proof for Theorem \ref{second theorem} as well. 
\medskip

As in the previous section, we arrange $\widetilde{M}$ so that, after some small later time, $\widetilde{M}_t$ is as close as we wish to the original surface $M$ in $C^2$ norm. To proceed we will modify the profile curve $f$ of $M$ in its $(\delta, c)$-cylindrical domain by L-shaped curves, suitably capped, as indicated in Figure \ref{Figure 10} (with the definition of $(\delta,c )$ cylindrical given in the introduction).
\medskip

To be more precise, in Figure \ref{Figure 10}, $\widetilde{f}$ is the profile curve corresponding to $\widetilde{M}$. We consider two circles of radius $d$, one of which is denoted in Figure \ref{Figure 10} by $C_d$, with centers at the points $(p\pm d \pm\frac{r}{2}, c+d+\frac{r}{100})$. Then, $\widetilde{f}$ is formed by  modifying $f$ within $[a,b]$ about $p \in (a,b)$ so that $\widetilde{f}$ does not intersect the circles $C_d$. We may form $\widetilde{f}$ so that $\widetilde{f}\geq f$, $\widetilde{f}(p) = f(p)+L$, $\widetilde{M}, \widetilde{f}$ satisfy condition (5) of Theorem \ref{third theorem} outside the ball $B(f(x_0), 2d)$, and $\widetilde{f}$ has only one critical point at $p$ inside the interval $[p-\epsilon, p+\epsilon]$. We may also choose $\widetilde{f}$ so that $|\widetilde{f}'|$ has a lower bound, independent of $\delta, d, r$ in the intervals $[p-\epsilon/2, p-d-\frac{r}{2}]$ and $[p+d+\frac{r}{2}, p+\frac{\epsilon}{2}]$. Note that the $\frac{r}{100}$ term is included in the definition of $C_d$ to ensure that we may satisfy the critical point condition (5) in the interval $[p-\epsilon, p+\epsilon]$. Also, based on our choice of $c$ and $d$, $\widetilde{M}$ may not be mean convex near $C_d$. 
\medskip

\begin{figure}
\centering
\includegraphics[scale = .45]{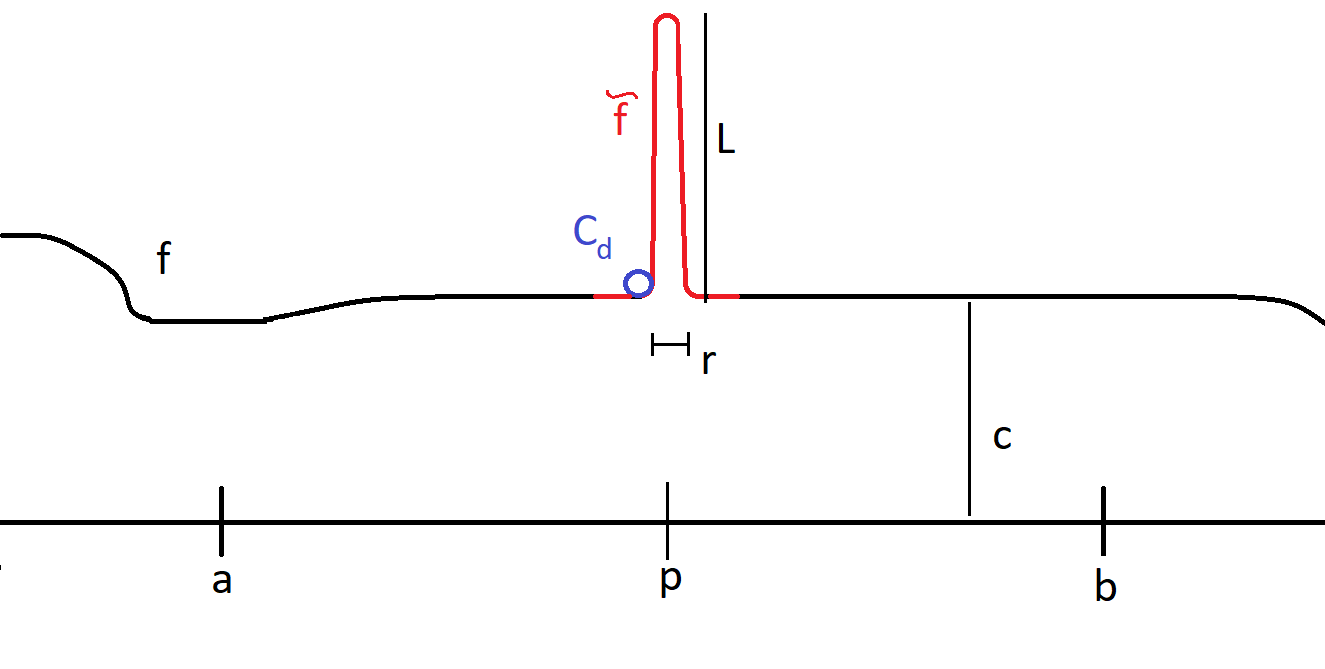}
\caption{The setup for the barriers.}
\label{Figure 10}
\end{figure}

As in the proof of Theorem \ref{second theorem}, we will first use barriers to show that the conclusion of Theorem \ref{third theorem} holds for $C^0$ norm, i.e.\ that $\widetilde{M}$ will flow to be arbitrarily close in $C^0$ to $M$, depending on sufficiently small choices of parameters $\delta, d, r$. 
\medskip

Before we choose which parameters to use in the construction of $\widetilde{f}$, we start by rescaling to make the eventual application of the Brakke regularity theorem and Lemma \ref{refinement} clearer. The content of the following lemma is that we may rescale $M$ at every point of the $(\delta,c)$-cylindrical region so that it is as close as we want to flat in as large a neighborhood as we want, with a rescaling factor depending on $c$. 
\medskip

From here on, let $M^\sigma$ be the surface $M$ rescaled by $\sigma$ around $p$, and let $\Omega^{\sigma}$ be the $\epsilon$-tubular neighborhood of the hypersurface given by the rotation of
$$\{(p+\sigma t, \sigma f(p)\,|\,t \in (-\epsilon, \epsilon)\}\subset \mathbb{R}^2$$
around the axis containing $[p-\sigma \epsilon,p+\sigma \epsilon]$. For both $M^{\sigma}$ and $\Omega^{\sigma}$ translate these sets, taking $p$ to the origin. Denote by $M_t^{\sigma}$ the parabolically rescaled mean curvature flow where the $t$ represents the rescaled time parameter.

\begin{lem}\label{scale factor lemma} With the notation set in the previous paragraph, for every $R, \overline{\epsilon} > 0$, we may find $\delta, \epsilon \ll 1$ and $\sigma \gg 0$ depending on $c, R, \bar{\epsilon}$ such that for every $q \in \Omega^{\sigma}\cap B(0,R/2)$, 

$M_t^{\sigma}\cap B(q,R/2)$ is $\overline{\epsilon}$-close in $C^2$ to $T_0 M_t^{\sigma}\cap B(0,R)$ for $t \in [0,100]$ and $\Theta(M_{t}^{\sigma}, x, r^*) \leq 1+\frac{\overline{\epsilon}}{2}$ for $r^* \in [1,2]$ and $x \in B(q,R/2)$.
\end{lem}

Note that if a surface $N$ is a graph over $M^{\sigma}$ that is $\delta'$-close in $C^2$ norm to $M^{\sigma}$, then $N^\frac{1}{\sigma}$ will be a graph over $M$ that is $\frac{\delta'}{\sigma}$-close in $C^2$ norm to $M$. The idea is that we will use the rescaling in combination with Lemma \ref{refinement} to find that $\widetilde{M}_t$ will be $\delta_0$-close in $C^2$ in $B(p,\epsilon)$ to $M$, assuming that it is $C^0$ close. As in Theorem \ref{second theorem}, a pseudolocality argument for $\widetilde{M}_t$ outside $[p-\epsilon, p+\epsilon]$ ensures that $\widetilde{M}_t$ will be $\delta_0$-close to $M$ in $C^2$. From this, we will conclude that $\widetilde{M}_t$ flows to a round point. In order to do this, we proceed as follows: 
\medskip

\begin{enumerate} 
\item Let $\delta_0>0$ such that if a smooth surface $N$ is $\delta_0$-close in $C^2$ to $M$, then $N$ flows smoothly to a round point. Pick $C'$ such that if $N$ satisfies $|A|<C'$ in $B(0,R)$, then $N$ is $\frac{\delta_0}{2}$-close in $C^2$ to the plane $T_0 N$ in $B(0,R)$. In Lemma \ref{refinement}, apply the choice of $C'$ to find an $\epsilon_0>0$ for which the conclusion of Lemma \ref{refinement} holds. 
\item For $R = 1000$ and $\overline{\epsilon} = \epsilon_0$, obtain a scale factor $\sigma$ in Lemma \ref{scale factor lemma}, such that $\sigma > \frac{10^4}{\delta_0}$.  With this choice of $\sigma$, for every $q \in \Omega^\sigma$ and $x \in B(q,R)$, $\Theta(M_{t}^{\sigma}, x, r^*) < 1 +  \frac{\overline{\epsilon}}{2}$ for $r^* \in [1, 2]$ and $t \in [0,100]$. 
\end{enumerate}

The point of choosing $\sigma > \frac{10^4}{\delta_0}$ is to ensure that the difference in $C^2$ between a plane and the $(\delta, c)$-cylindrical $M^{\sigma}$ is sufficiently small in $B(0,1000)$ for our purposes, taking $\delta \ll c/\sigma$. 
\medskip 

The goal now is to show that $\widetilde{M}_t$ will by some time be close in $C^0$ to $M$ depending on a small enough choice of parameters $\delta, d, r, \epsilon$. Then, we will use the structure of $\widetilde{M}_t$ to prove that the Gaussian densities will be small enough to apply Lemma \ref{refinement} and conclude the theorem.
\medskip

As in Lemma \ref{nearly graphical lemma}, we show that $\widetilde{M}^\sigma_t$ must become arbitrarily close in $C^0$ norm to $M^\sigma$ by some small time $T_0 < 50$, by choosing parameters $\delta, d, r$ small enough. This uses the recently-constructed ``ancient pancakes'' of Wang \cite{Wa11} and Bourni-Langford-Tinaglia \cite{BLT1} as barriers. 
\medskip

By the ``spike,'' we mean the glued-in region $\widetilde{M}^{\sigma}\setminus M^{\sigma}$ and the subsequent flow of this region. In particular, by the ``spike,'' we mean the graph $\widetilde{f}^{\sigma}_t$ restricted to the interval $[p-\sigma r/2, p+\sigma r/2]$. The idea is to control $\widetilde{M}^{\sigma}_t$ near $C_d$ and use an ancient pancake as a barrier for the flow of the spike, in order to show that $\widetilde{M}^{\sigma}_t$ is $C^0$ close to $M^{\sigma}$ after some short time. This barrier argument is quite similar to that of Theorem \ref{second theorem}. The only difference is that this requires using a family of ancient pancakes of width depending on time, with each pancake acting as a barrier for a short amount of time. 
\medskip

We first note that it is enough to control the position of the maximum of $\widetilde{f}_t^{\sigma}$, i.e. the ``tip'' of the spike. 
\medskip

By the Sturmian theory of \cite{Altschuler1995}, we have that the number of local minima and maxima of $\widetilde{f}^\sigma_t$, the flow of $\widetilde{f}^\sigma$, is nonincreasing and by Angenent's general Sturmian theory~\cite{Ang88}, this number drops exactly at the double zeros. This means that any inflection points disappear instantaneously and the number of critical points is nonincreasing and drops when two critical points come together. Since the only critical point of $\widetilde{f}^\sigma$ in the interval $[p-\epsilon\sigma, p+\epsilon\sigma]$ is at $p$ and since there is a uniform lower bound on $|\widetilde{f}'|$, independent of $\delta, d, r$ in the intervals $[p-\epsilon/2, p-d-\frac{r}{2}]$ and $[p+d+\frac{r}{2}, p+\frac{\epsilon}{2}]$, there will remain a single positive local maximum of $\widetilde{f}^{\sigma}_t$ in the interval $[p-\frac{\epsilon\sigma}{2}, p+\frac{\epsilon\sigma}{2}]$ for a uniform amount of time independent of $\delta, d,r$. 
\medskip

The barriers we use will intersect the flow near $C_d$. In order to apply barriers around the spike, we must control the behavior of the flow where the barriers will intersect. So, we will find control on the flow near $C_d$. Note that if $c$ is large compared to $d$, the curve shortening flow of the circles $C_d$ will nearly correspond, for short time, to the motion of the profile curve of a rotationally symmetric $S^{n-1} \times S^1$ with the same profile curve(s). So, up to fixed error for small time, we have that the curve shortening flow of $C_d$ serve as barriers for the motion of the profile curves $f_t$. This allows us to define the ``width'' $r_t$ of the spike as $|p_1 - p_2|$, where $p_1, p_2$ are the unique points in the interval $[p-d-r/2, p+d+r/2]$ such that $f(p_1), f(p_2) = c+d/2$. The fact that these points are unique comes from the Sturmian theory fact that there is one critical point in this interval. By the reasoning with $C_d$, $r_t$ can be bounded above by $r +d - \sqrt{d^2 - 2t}$, for a uniform amount of time depending only on $d$. 
\medskip
 
 Recall now that an ancient pancake is asymptotically the surface of rotation given by a grim reaper curve~\cite{BLT1}. As the ancient pancake is taken to be thinner, it will smoothly approximate the surface of rotation of a grim reaper.
\medskip
 
We will use the asymptotics of the ancient pancake in order to understand their behavior as barriers, and then to find $C^0$ control on $\widetilde{M}^{\sigma}_t$ as stated above. Note that a grim reaper of width
$r  + d - \sqrt{d^2 - 2t}$ translates at a speed approximately $\frac{1}{r + d -  \sqrt{d^2 - 2t}}$. Our family of barriers will approximate grim reapers with widths $r_t$, and this family will have infinite displacement as $r \to 0$. Indeed,  
$$\int_0^{d^2/2} \frac{1}{r + d - \sqrt{d^2 - 2t}}\,dt = [(d+r)\log{(\sqrt{d^2 - 2t} + d + r)} + \sqrt{d^2 - 2t}]\mid_0^{d^2/2}$$
 which diverges as $r \to 0$. This will approximate the motion of the tips of an ancient pancake as the width is taken to zero. 
 \medskip
 
Note that each ancient pancake is backwardly asymptotic to a slab: two parallel non-intersecting planes. And to be more specific, the distance between these two planes is what we call the width of the ancient pancake.  
 \medskip

  With this in mind, we consider a family of flows $\{P_i\}_{i=1}^n$ consisting of ancient pancakes defined on the uniform partition of $[0,d^2/2]$ by $n$ equal length intervals. Here, each $P_i(t)$ is an ancient pancake defined on times $t \in [(i-1)d^2/2n, i d^2/2n]$ and its width is $r  + d - \sqrt{(d^2 - 2(id^2/2n))}$ initially with tips positioned so that the tip of $P_i(id^2/2n)$ coincides with the tip of $P_{i+1}(id^2/2n)$. For each $w \ll 1$, there is $h_w>0$ so that in the profile plane an ancient pancake $P_i$ of width $w_1 < w$ will be arbitrarily close to the width $w_1$ slab in the complement of the $h_w$ neighborhood of the tip of $P_i$. Also, we may choose $h_w \to 0$ as $w \to 0$. We will ultimately let $w$ equal $d + r$ which will in turn be taken to be small, and each $P_i$ will have width bounded by $w$.    
\medskip

From now on, we will consider $P_i$ to be rescaled by $\sigma$. All parameters associated to width and time for ancient pancakes are written before rescaling by $\sigma$, for compactness of notation. This allows us to position and rescale the pancake $P_1$ so that $P_1$ encapsulates the spike, choosing $r$ small enough---that is, we choose a rescaled $P_1$ so that it only intersects $\widetilde{f}^{\sigma}$ where $\widetilde{f}^{\sigma} < \sigma (c+d/10)$ in the interval $[p-\sigma(d+r/2), p+\sigma(d+r/2)]$. Note that the estimate on the width $r_t$ of the spike controls where the pancake intersects $\widetilde{M}^{\sigma}_t$. Using $P_1$ as a barrier, this implies that the tip of the spike (that is, the maximum of the graph of $\widetilde{f}^{\sigma}_t$) will be contained in the flow $P_1(t)$ for the times $[0, d^2/2n]$ (before rescaling of the time interval), or until the height of $P_1$ is less than $h_w$. If the height of $P_1$ is less than $h_w$, then from our choice of $h_w$ below we will be done so we will suppose this does not occur. By our choices of $P_i$ along with the fact that wider pancakes of a given diameter contain narrower pancakes (i.e. contained in a smaller slab) of the same diameter, we have that the spike at time $(i-1)d^2/2n$ will be contained in the region bounded by $P_i((i-1)d^2/2n)$ initially and will remain so for the time it is defined.
\medskip
 
Let $h$ be the initial height of $P_1$, which can be taken to be the height of the spike initially, i.e.\ the maximum of $\tilde{f}_0^{\sigma}$ over $[p-\sigma(d/2+r/2), p+ \sigma(d/2 + r/2)]$. Since the tips of the pancakes are taken to coincide, the tips of each of the $P_i((i-1)d^2/2n)$ have height $h_i$ bounded above and below as in (\ref{htbound}). This uses the fact that asymptotically the profile curve of the pancakes agrees with a grim reaper~\cite{BLT1}:
 \begin{equation}\label{htbound}
 \sigma (c-\delta)+ h_w < h_i < h - \sum_{k=1}^{i -1} \frac{1}{r + d - \sqrt{d^2 - 2((k-1)d^2/2n)}}\frac{\sigma d^2}{2n} + O(r,d),
  \end{equation}
where $O(r,d)$ is the error measuring the deviation of the asymptotics of the ancient pancakes from the rotated grim reaper. That is, $O(r,d)\to 0$ as $r,d \to 0$, i.e.\ the error from the rotated grim reaper improves as the widths of the ancient pancakes are taken to zero with fixed diameters. The bound from below in (\ref{htbound}) follows trivially, from the definition of $h_w$ and our barriers, since $\tilde{f}_t^\sigma \geq f_t^{\sigma}$, where $f_t^{\sigma}$ is $\sigma \delta$-close to $\sigma c$. The sum in (\ref{htbound}) is just a lower Riemann sum for the displacement integral for the grim reaper above. So, if $d, r, \delta$ are taken to be small enough, the quantity $h_i$ can be taken to be arbitrarily close to $\sigma c$ for large enough $i$. That is, we may arrange so that $h_i$ is close to $\sigma c$, no matter what $h$ is initially, since $h_w \to 0$ as $w \to 0$.
\medskip

Since the family of ancient pancakes $\{P_i\}$ serves as barriers, the maximum of $\widetilde{f}^{\sigma}_t$ at $p$ will be bounded by $h_i$. So, this maximum can be arranged to be arbitrarily close to $\sigma c$ no matter its initial height by picking $d$, $r$ small enough again with $w = d+r$. That is, for each $\delta^*>0$, there exists a choice of $\delta, d, r>0$ and $T^*$ such that for $\widetilde{M}^{\sigma}$ corresponding to $\delta, d, r>0$, $\widetilde{M}^{\sigma}_{T^*}$ is $\delta^*$-close in $C^0$ to a plane in $B(p, \sigma \frac{\epsilon}{2})$. 
\medskip

In order to upgrade the $C^0$ estimates to $C^2$ estimates, we will control the Gaussian densities of $\widetilde{M}^{\sigma}_t$. This will use the particular structure of the flow of $\widetilde{f}^\sigma_t$, which allows for applications of Sturmian theory, inside the interval $[p-\frac{\epsilon}{2}\sigma, p+\frac{\epsilon}{2}\sigma]$ (note in practice $\epsilon \sigma \gg 1$). An application of Sturmian theory~\cite{Altschuler1995} says that $\widetilde{f}_t$ has only one local maximum in $[p-\frac{\epsilon}{2}\sigma, p+\frac{\epsilon}{2}\sigma]$. Moreover, since a local maximum of a graph has positive geodesic curvature with respect to the downward pointing normal, the mean curvature of $\widetilde{f}^\sigma_t$ at $p$ will remain positive for a time independent of $d,r$. Note also that $\widetilde{f}^{\sigma}_t \geq f^{\sigma}_t$ for each $t$. This implies that once the flow is $\delta^*$-close in $C^0$ to a plane as a graph over $B(p, \sigma \frac{\epsilon}{2})$ by time $T_0$, it will remain so for a time independent of $d,r$ because the local maximum of $\widetilde{f}^{\sigma}_t$ at $p$ will monotonically approach the axis of rotation. By pseudolocality, for each $\delta^*$, there exists a choice of $\delta, d, r$ and $T_0$ such that $\widetilde{f}^{\sigma}_t$ will be $\delta^*$-close in $C^0$ to the radius-$\sigma c$ cylinder over $[p-\frac{\epsilon}{2}\sigma, p+ \frac{\epsilon}{2}\sigma]$ for $t \in [T_0, \widetilde{T}]$, where $\widetilde{T}$ is independent of $\delta, d, r$, as in the proof of Theorem \ref{second theorem}. By possibly rescaling with $\sigma$ more, we may ensure that $\widetilde{T}>100$. Also, note that $T^* \to 0$ as $\delta, d, r \to 0$, just as in Lemma \ref{nearly graphical lemma}.
\medskip

 Our arrangement of $\widetilde{M}^\sigma$ is summed up in the following lemma. See Figure \ref{Figure 12}, which corresponds to Lemma 5.2.
\medskip

\begin{lem}\label{lemma lemma} Let $q \in \Omega^\sigma$. Then for every $\epsilon^*, \delta^* \ll 1$ and $T_0 < 50$ there is a choice of $\delta, d, r$ giving $\widetilde{M}^{\sigma}$ and a hyperplane $P$ and codimension 2 plane $\mathcal{L} \subset P$ such that the following holds:
\begin{enumerate}
\item $\widetilde{M}^\sigma_{t} \cap B(q,1000)$ is $\epsilon^*$-close in $C^2$ norm to $P$ in the complement of $T_{2d} \mathcal{L}$, the radius-$2d$ tubular neighborhood  of $\mathcal{L}$,
\item In $\mathcal{T}:= T_{2d}\mathcal{L} \cap B(q,1000)$, $\widetilde{M}^\sigma_{T_0}$ is a graph over $P$ of height bounded by $\delta^*$ for $t \in [T_0, 100]$. 
\end{enumerate} 
\end{lem}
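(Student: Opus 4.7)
The plan is to take $P$ to be the $n$-plane in $B(q,1000)$ provided by the preceding lemma (the one along which $M^\sigma$ is arbitrarily close to flat for $\sigma$ large), and to take $\mathcal{L}\subset P$ to be the codimension-$2$ affine subspace corresponding to the rescaled ``corner ring'' of the pancake nearest to $q$. Recall that the L-shaped corners in the profile $\widetilde{f}$ sweep out rotationally symmetric $(n-1)$-spheres under the rotation, and in the rescaled picture each such $(n-1)$-sphere has radius $\gg 1000$, so it is $C^2$-close to a codimension-$2$ affine plane throughout $B(q,1000)$; I take $\mathcal{L}$ to be the best approximating such plane for the ring closest to $q$ (if $q$ is far from every ring, the interior region is empty and only condition (1) is in force; if two rings intersect $B(q,1000)$, one runs the argument separately on the associated thickenings and takes $\mathcal{L}$ to be their union).

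For the exterior region $B(q,1000)\setminus T_{2d}\mathcal{L}$, the initial surface $\widetilde{M}^\sigma$ is either the nearly flat cylinder or a disjoint pair of nearly flat sheets (the top of the pancake above the cylinder), separated by a uniform amount with uniformly bounded curvature. First I would run the $C^0$ barrier argument already sketched just before the lemma: thin ancient pancakes of Wang and of Bourni--Langford--Tinaglia, slid in from both sides, act as outer barriers and force $\widetilde{M}_t^\sigma$ into a thin slab around $P$ by time $T_0$, remaining in such a slab through $t = 100$. Next I would apply Lemma \ref{refinement} sheet-by-sheet to convert the small Gaussian density ratios furnished by item (2) of the parameter choice (and preserved for the flow by one-sided minimization on each sheet) into $C^2$ closeness to $P$ at $t = 50$; Shi's estimates then propagate this to $C^2$ control on the remainder of $[T_0,100]$.

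For the interior region $T_{2d}\mathcal{L}\cap B(q,1000)$, the approach relies on the rotationally symmetric structure. By the Sturmian theory of Angenent and of Angenent--Altschuler--Giga invoked in the discussion preceding the lemma, the single positive local maximum of $\widetilde{f}^\sigma$ in $[p - \tfrac{\epsilon \sigma}{2},\, p + \tfrac{\epsilon \sigma}{2}]$ persists for a uniform time independent of $d,r$, so $\widetilde{M}^\sigma_t$ remains a graph over $P$ inside $T_{2d}\mathcal{L}$ on $[T_0, 100]$ with positive mean curvature at the tip. Sandwiching the tip between ancient pancake barriers above and the cylinder $M^\sigma$ below forces the tip height below $\delta^*$ by time $T_0$; since the tip retains positive mean curvature, the graph can only move toward $P$, so the height bound persists on $[T_0, 100]$.

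The main obstacle will be the nonmean convex corners where the L-shape meets the cylinder, since there neither the pancake nor the cylinder alone serves as a one-sided barrier. The resolution is the hierarchy of scales $\delta \ll r \ll d \ll \epsilon$ built into the construction: pseudolocality confines the motion of these corners to a set well inside $T_{2d}\mathcal{L}$ on $[0, T_0]$, so on the exterior region the ancient pancake barriers may be introduced without interference, and on the interior region one only needs the weaker $C^0$ graphicality asserted by (2).
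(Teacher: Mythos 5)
Your overall outline is aligned with the paper: choose $P$ to be the nearly flat plane supplied by the preceding rescaling lemma, take $\mathcal{L}$ to be a linear approximation to the rescaled corner ring of the pancake, use ancient pancake barriers plus pseudolocality to control where the surface can be, and use the Sturmian theory to keep the tip as the unique critical point. The interior part and the treatment of the nonconvex corners via the hierarchy $\delta \ll r \ll d \ll \epsilon$ are essentially the paper's argument.

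However, there is a genuine gap in the exterior step. You propose to apply Lemma \ref{refinement} ``sheet-by-sheet'' after arguing that the Gaussian density ratios from item (2) of the parameter choices are ``preserved for the flow by one-sided minimization on each sheet.'' This does not work for two reasons. First, White's one-sided minimization is a consequence of mean convexity, and the paper explicitly points out that $\widetilde{M}^\sigma$ is not mean convex near the corners where the pancake meets the cylinder; so the theorem is simply not available for $\widetilde{M}^\sigma_t$. Second, the ``sheets'' you describe (the flat cylinder and the two pancake faces) are not independent flows --- they are connected through the non--mean-convex corner region, so there is no meaningful sense in which one-sided minimization can be invoked on one sheet at a time. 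Beyond the technical failure, this also reverses the logical order in the paper: the structural lemma is proved first via $C^0$ barrier estimates, Sturmian control of critical points, and pseudolocality, and \emph{then} the density ratios are estimated directly from the structure (the mass concentrated along the ridge $T_{2d}\mathcal{L}$ is sent to zero as $d\to 0$, while the bulk is an almost-flat graph of small height), at which point Lemma \ref{refinement} furnishes the $C^2$ control. Replacing the one-sided minimization step with this direct area computation after the barrier/Sturmian/pseudolocality work would close the gap.

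A small further point: the lemma asks for a single codimension-2 plane $\mathcal{L}$, and your fallback of taking a union of planes if two rings meet $B(q,1000)$ is unnecessary, since in the rescaled picture the separation between the two corner rings is on the order of $\epsilon\sigma \gg 1000$.
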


\begin{figure}
\centering
\includegraphics[scale = .45]{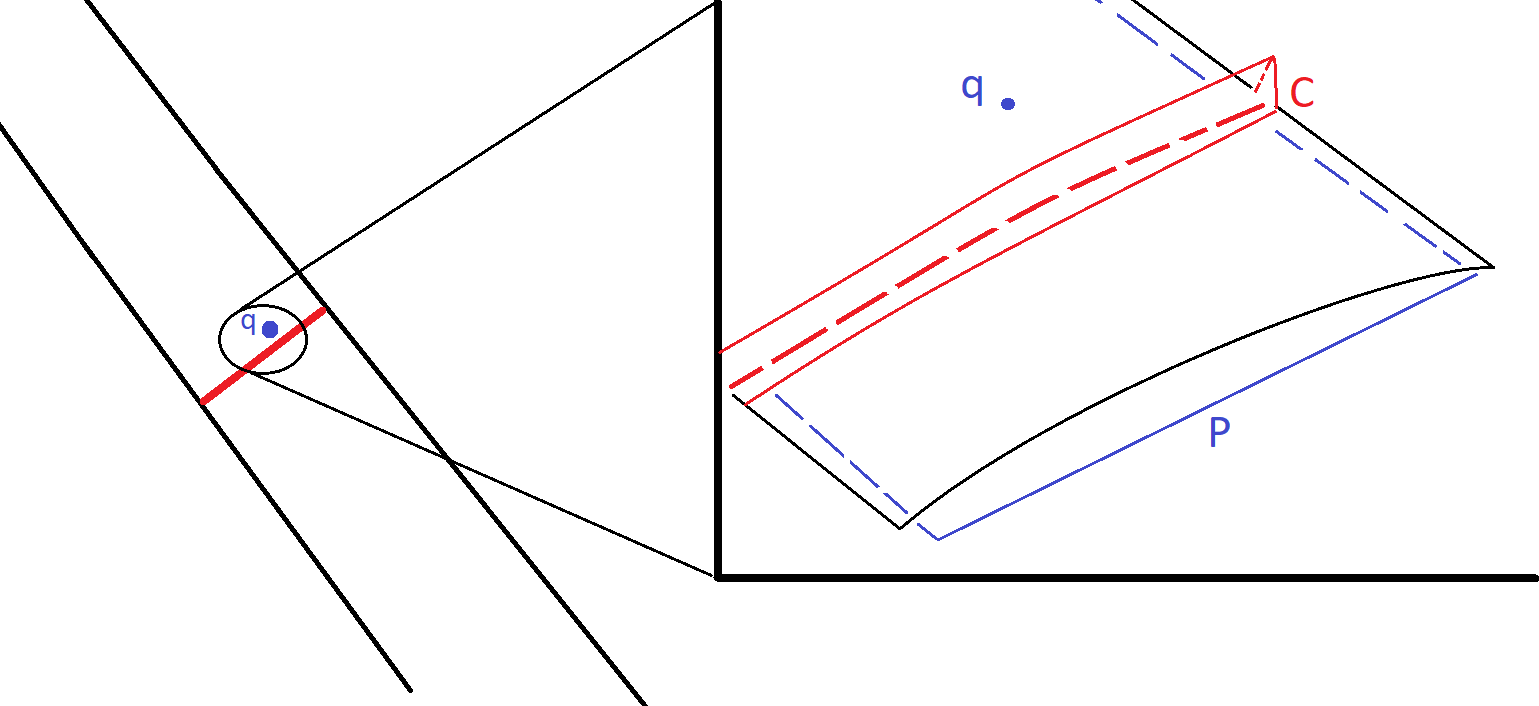}
\caption{A depiction of the flow after a short time.}
\label{Figure 12}
\end{figure}
\medskip

By the fact that $\widetilde{f}^{\sigma}_t$ will have only one critical point at $p$ for a time independent of $\delta, d,r$ and by an application of Lemma \ref{lemma lemma}, we find that the $C^1$ norm of $\widetilde{M}^{\sigma}_t$ at $t = T_0$ in the complement of $T_{2d}\mathcal{L}\cap B(0,1000)$ goes to zero as $\delta, d,r \to 0$. Now, we note that there are uniform entropy bounds (giving uniform area ratio bounds) for $\widetilde{M}^{\sigma}_t$ independent of $\delta, d, r$. Choosing $r \ll d$ small enough and applying the uniform area ratio bounds, we find $\delta, d, r$ small enough so that $\Theta(\widetilde{M}^{\sigma}_t \cap T_{2d}\mathcal{L}\cap B(0,1000), x, r^*)\leq \frac{\bar{\epsilon}}{4}$ for $x \in \widetilde{M}^{\sigma}_t \cap T_{2d}\mathcal{L}\cap B(0,1000)$ and $r^* \in [1,2]$. Then, applying Lemma \ref{lemma lemma} and the $C^1$ bounds on $\widetilde{M}^{\sigma}_t$ in the complement of $T_{2d}\mathcal{L}\cap B(0,1000)$, we find $\delta, d, r$ small enough such that that $\Theta(\widetilde{M}^{\sigma}_t, q, r^*) \leq 1 + \overline{\epsilon}$ for $x \in \widetilde{M}^{\sigma}_t \cap B(0,1000)$ and $r^* \in [1,2]$ at $t = T_0$. Huisken monotonicity then implies that $\Theta(\widetilde{M}^{\sigma}_t, x, r^*) \leq 1 + \overline{\epsilon}$ for $t \in [T_0 + 25, T_0 + 50]$ and $r^* \in (0, 2]$. Applying Lemma \ref{refinement} and our choices of constants, we find that $\widetilde{M}^{\sigma}_t$ is $\delta_0/2$-close in $C^2$ to a plane in $B(0,1000)$. By our choice of $\sigma, \delta$, $\widetilde{M}^{\sigma}_t$ is $\delta_0$-close in $C^2$ to $M^{\sigma}$ in $B(0,1000)$ for $t \in [T_0 + 25, T_0 + 50]$. Applying pseudolocality outside $B(0,1000)$, as in Theorem \ref{second theorem}, we have that $\widetilde{M}^{\sigma}_t$ is $\delta_0$-close in $C^2$ to $M^{\sigma}$, meaning that $\widetilde{M}_t$ flows to a round point. This completes the proof of Theorem \ref{third theorem}.

\section{Applications}

In this section, we will discuss applications of Theorems \ref{second theorem} and \ref{third theorem}. First, we will prove Corollary \ref{no GH limit corollary}, which is summarized by Figure \ref{spiky ball figure}.%note that the labelling here is not working

\begin{proof}[Proof of Corollary \ref{no GH limit corollary}]
This corollary is a consequence of iterated applications of Theorem \ref{second theorem}. By a ``spike,'' we mean the glued-in subset of $\widetilde{M}$ found in Theorem \ref{second theorem} which is a perturbation of a tubular neighborhood of a curve. Let $M^0$ be the round unit sphere centered at the origin. Then, if we have defined $M^i$, we define $M^{i+1}$ by attaching an outward-pointing length one ``spike'' using Theorem \ref{second theorem} with a base point $p_{i+1} \in M^i \cap M^0$ and $L=1$. We take the parameters small enough in each application of Theorem \ref{second theorem} so that $M^i \cap M^0$ is nonempty for each $i$. So, by construction, each $M^i$ shrinks to a round point. Since each $M^i$ contains a round unit sphere, there is a uniform lower bound on the existence time for $M^i_t$ independent of $i$. However, as noted in the survey ~\cite{SormaniHowManifoldsConverge12},  balls of radius $\frac{1}{2}$ located at the tip of each spike are all disjoint. Since there are infinitely many of them in the limit, there is no Gromov-Hausdorff limit of $M^i$.
\end{proof}
Now, we will prove Corollary \ref{no GH limit with area unbounded}, which is a corollary of Theorem \ref{third theorem}.
\begin{proof}[Proof of Corollary \ref{no GH limit with area unbounded}]
This corollary is summarized in Figure \ref{entropy example}. Using Theorem \ref{second theorem}, let $M^0$ be a round unit sphere with a rotationally symmetric spike of length $1$ attached, such that $M^0$ shrinks to a round point. We may choose the parameters of the spike from Theorem \ref{second theorem} (namely $N$) small enough so that the spike is close enough to cylindrical to apply Theorem \ref{third theorem}. By a ``pancake,'' we mean the glued-in subset of $\widetilde{M}$ found in Theorem \ref{third theorem} which is a perturbation of a tubular neighborhood of a curve rotated around an axis. We use Theorem \ref{third theorem} with $L=1$ applied to a point halfway up the spike to find $M^1$. The surface $M^1$ will look like $M^0$ with a thin pancake attached halfway up the spike. Then, to construct $M^2$, we attach another rotationally symmetric spike of length $\frac{1}{10}$ at the tip of the spike on $M^1$ using Theorem \ref{second theorem}. Again, we apply Theorem \ref{third theorem} with $L=1$ at a point halfway up this new spike, having chosen the parameters in Theorem \ref{second theorem} small enough for the new spike so that we may apply Theorem \ref{third theorem}. Inductively, if $M^i$ is constructed, then $M^{i+1}$ is constructed by attaching a rotationally symmetric spike of length $\frac{1}{10^{i}}$ at the tip of the spike constructed for $M^i$ and we apply Theorem \ref{third theorem} with $L=1$ at a point halfway up this new spike.
$\medskip$

By construction, each $M^i$ flows to a round point and must exist for a uniform time since each $M^i$ contains the round unit sphere. Since the series $\frac{1}{10^i}$ converges, we have that $\diam(M^i)$ is uniformly bounded. We are attaching thin pancakes of radius $1$ to each spike, so $\Area(M^{i+1}) \geq \Area(M^i)+2\pi$ and $\Area(M^i) \to \infty$.  
$\medskip$

Now, by the construction in Theorem \ref{third theorem}, for each $i$, the flow of $M^{i+1}$ will be close in $C^2$ to $M^i$ after some small time $T_i$. That is, $M^{i+1}$ will flow for time $T_i$ so that $M^{i+1}_{T_i}$ is close enough in $C^2$ to $M^i$ so that $M^{i+1}\in \overline{\Sigma}$. We may easily arrange so that $\sum_i T_i < \infty$ and the tail of this sequence goes to zero. For each $j$, each $M^i_t$ will be uniformly $C^2$ close to the flow of $M^j_t$ by a small positive time independent of $i$. Thus, we have that $\Area(M^i_t) < C(t)$ for a $C(t)$ independent of $i$. 
\end{proof}

\begin{proof}[Proof of Corollary \ref{highentropy}] This corollary is closely related to Corollary \ref{no GH limit with area unbounded}, and it is summarized by Figure \ref{entropy example}. Fix $\delta, E > 0$. The example for this corollary will be constructed via a finite-step inductive procedure similar to what is done above. Form $M^0$ by attaching a spike of length $\frac{\delta}{10}$ to the round unit sphere. Construct $M^{i+1}$ by attaching a spike of length $\frac{\delta}{10^{i+2}}$ to the tip of the spike attached to $M^{i}$. For each $M^{i}$, glue in pancakes on the spike using Theorem \ref{third theorem} as above so that each pancake has radius $L = \frac{\delta}{10}$. By construction, each $M^i$ will flow to a round point and are all within $\delta$ of the round unit sphere in the Hausdorff distance. Since $\Area(M^{i+1}) \geq \Area(M^i)+\frac{2\pi \delta^2}{100}$, we have that $M^{i}$ has arbitrarily large area for $i$ large. By considering an $F$ functional (see \cite{ColdingMinicozziGeneric}) at the scale of the pancakes centered near the middle of the attached spike of $M^{i}$, we have that the entropy can be taken to be arbitrarily large for $i \gg 1$, in particular larger than $E$, since the area is arbitrarily large in a small neighborhood.
\end{proof}

\begin{proof}[Proof of Corollary \ref{space filling surface sequence}]
Fix $M^0$ to be a unit sphere. Find a maximal $\frac{1}{10}$-separated net $\mathcal{N}^0$ on $M^0$. For each point in $\mathcal{N}^0$, apply Theorem \ref{second theorem} with $L=2$ and $C=0$ to construct an inward-pointing spike. This must be done one at a time, and the width of each spike attached will vary. Let $M^1$ be the surface with all spikes attached to $\mathcal{N}^0$. Let $C_1$ bound the second fundamental form of $M^1$, i.e.\ $|A|^2 \leq C_1$. Note that each application of Theorem \ref{second theorem} gives an $r$ which controls the curvature of the spike added, so $C_1$ is controlled by the reciprocal of the smallest $r$ used in the application of Theorem \ref{second theorem} to the points $\mathcal{N}^0$.
%Then, every point in the unit ball $\mathcal{B}$ bounded by $M^0$ is within $\frac{1}{10}$ of a point on $M^1$. 
Now, we pick two maximal $\frac{1}{10^2 C_1}$-separated nets, $\mathcal{N}^1_1$ and $\mathcal{N}^1_2$, on $M^1$, and we attach spikes via Theorem \ref{second theorem} with $L=2$ and $C=0$ at the points $\mathcal{N}^1_1$ and $\mathcal{N}^1_2$. We choose the spikes attached at the points $\mathcal{N}^1_1$ and $\mathcal{N}^1_2$ to be pointing in opposite directions, so that the spikes attached at $\mathcal{N}^1_1$ are inward-pointing and the spikes attached at $\mathcal{N}^1_2$ are outward-pointing. Then, we let $M^2$ be the surface obtained by this process. Now, we will construct $M^{i+1}$ iteratively as above. If we have $M^i$, then we find $C_i$ bounding the second fundamental form of $M^i$. We pick two maximal $\frac{1}{10^{i+1}C_i}$-separated nets, $\mathcal{N}^i_1$ and $\mathcal{N}^i_2$, on $M^i$, and we attach spikes via Theorem \ref{second theorem} with $L=2$ and $C=0$ at the points $\mathcal{N}^i_1$ and $\mathcal{N}^i_2$. We choose the spikes to be inward and outward-pointing for $\mathcal{N}^i_1$ and $\mathcal{N}^i_2$ as in the base case. We let $M^{i+1}$ be the surface obtained from adding all the spikes to $M^i$ as specified. By construction, each $M^i$ shrinks to a round point by Theorem \ref{second theorem}. By the same reasoning as in Corollary \ref{no GH limit with area unbounded}, we may conclude that $\Area(M^i_t) < C(t)$ for $C(t)$ independent of $i$. 
$\medskip$

Now we will prove that in the Hausdorff distance, $M^i$ converges to a set $\mathcal{K}$ that contains a unit ball $\mathcal{B}$, so $M^i$ is space-filling in the limit. We will do this by contradiction.
$\medskip$

Let $\mathcal{B}$ be the unit ball whose boundary is $M^0$. Suppose there exists a point $x \in \mathcal{B}$ such that for some $c>0$, $M^i \cap B(x,c) = \emptyset$ for all $i$. This implies that for all $i$ large enough, the normal lines\footnote{Here, we mean the oriented normal lines which are inward-pointing for $\mathcal{N}^i_1$ and outward-pointing for $\mathcal{N}^i_2$.} to $M^i$ at $\mathcal{N}^i_1$ and $\mathcal{N}^i_2$ do not intersect $B(x,c)$. This is because if a normal line to $M^i$ at a point in $\mathcal{N}^i_1$ or $\mathcal{N}^i_2$ did intersect $B(x,c)$, then one would construct a spike of length $L=2$ around that normal line (in order to construct $M^{i+1}$) which would contradict the assumption that $M^i\cap B(x,c)=\emptyset$ for all $i$. Now, for each $i$ large, let $p_i\in M^i$ be the point that minimizes the distance $d(x,M^i)$. 
%That is, for some small open set $U_i \subset M^i$ containing $p_i$, $p_i$ minimizes the distance $d(x,U_i)$. 
Since $p_i$ minimizes the distance from $x$ to $M^i$, the normal line to $M^i$ at $p_i$ passes through $x$. Suppose without loss of generality that $p_i$ is on the inward-pointing side of $M^i$, meaning that the normal line to $p_i$ oriented inward intersects $x$. The rest of the argument works just as well for outward-pointing by just replacing $\mathcal{N}^i_1$ with $\mathcal{N}^i_2$. Now, let $n^i_1\in \mathcal{N}^i_1$ be a point that minimizes the distance $d^{M^i}(p_i, \mathcal{N}^i_1)$, where $d^{M^i}$ denotes the intrinsic distance in $M^i$. Since $\mathcal{N}^i_1$ is $\frac{1}{10^{i+1}C_i}$-separated, this means that $d^{M^i}(p_i, n^i_1) \leq \frac{1}{10^{i+1}C_i}$. Since for large $i$ the distance between $p_i$ and $n^i_1$ is much smaller than the scale of the curvature of $M^i$, the inward-oriented normal line to $M^i$ at $n^i_1$ will be arbitrarily close to the inward-oriented normal line to $p_i$ as $i$ gets large. Since the inward-oriented normal line to $p_i$ intersects $x$, this means that for $i$ large, we may find an inward-oriented normal line to $M^i$ at some point of $\mathcal{N}^i_1$ that intersects $B(x,c)$. This contradicts the fact that the normal lines to $M^i$ at $\mathcal{N}^i_1$ and $\mathcal{N}^i_2$ do not intersect $B(x,c)$. Thus, there is no such $x$ and $c$, and the sequence $M^i$ becomes dense in $\mathcal{B}$.
\end{proof}

\begin{proof}[Proof of Corollary \ref{space filling hypersurface}]
The construction of the sequence $M^i$ is the same as in Corollary \ref{space filling surface sequence}. By construction of the spikes as in Theorem \ref{second theorem}, all $M^i_t$ must be some perturbation of $M_t$ by some time. Clearly, we may choose all parameters such that $M^i_t$ is arbitrarily close to $M_t$ by any $t_0 \ll 1$. Then, using continuity of the flow under initial conditions, we obtain this corollary. 
\end{proof}

\section{Concluding Remarks}
The following question is natural in light of Theorem \ref{first theorem}:

\begin{Question}\label{question} Does each embedded interval in $\mathbb{R}^{n+1}$ have a tubular neighborhood whose $C^2$ perturbations shrink to a round point? 
\end{Question}
There is some reason to believe this statement could be true. By Huisken's theorem, any closed, convex hypersurface flows smoothly to a round point, and this includes includes arbitrarily long closed convex surfaces. Fixing a radius, a capped-off cylinder of any length must shrink to a point in a uniformly bounded amount of time (by comparison with the round cylinder). Turning to the nonconvex case, a sufficiently small tubular neighborhood of any curve in $\mathbb{R}^{n+1}$ has mean curvature comparable to that of the convex cylinder of the same radius, and we expect this flow to behave as in the convex case. With that said, it could be possible to answer Question \ref{question} by considering tubular neighborhoods not of constant width but of varying width tailored to the geometry of the underlying curve.
$\medskip$

It is natural to consider analogous versions of Theorem \ref{second theorem} and \ref{third theorem} for other geometric flows, such as the Ricci flow. A result of Bamler-Maximo~\cite{BamlerMaximo17} states that under certain curvature conditions, Ricci flows with almost maximal extinction times must be nearly round. This is similar in spirit to our work in the proof of Theorem \ref{first theorem} as we are interested in controlling nearly convex flows up to their extinction time, which is itself near to the maximal extinction time of a comparable cylinder. However, our examples constructed from Theorem \ref{second theorem} will have extinction time very far from the maximal extinction time, interpreted in any reasonable sense. There do not appear to be other results in Ricci flow similar to the results in this paper. 
$\medskip$

For mean curvature flow in a curved ambient manifold or in higher codimension, analogous statements to Theorems \ref{second theorem} and \ref{third theorem} likely hold true, but their proof might not be simple. Our results depended on the existence of particular translating solitons and ancient solutions which serve as barriers, and no suitable analogues are known to exist for an arbitrary ambient manifold, although one would expect there to be ``approximate'' barriers in a small neighborhood of any point by scaling about it. In a similar vein, it is also unclear how to proceed in the higher codimension setting, due to the failing of the avoidance principle. It is possible one could reduce the higher codimension case to the codimension one case in this paper by working in a region where a $k$-dimensional flow approximately lies in a $k+1$ dimensional subspace of $\mathbb{R}^{n}$.

\begin{appendix}
\section{$C^2$ Stability and Nonconvex Surfaces Which Shrink to Round Points via Compactness-Contradiction}
In the appendix, we discuss how perturbations of surfaces which flow to round points will also flow to round points. The first result gives us that the set of smooth hypersurfaces which shrink to a round point is open under $C^2$ perturbations. Then, we prove a statement similar to Theorem \ref{first theorem} using a standard compactness argument.

\begin{thm}\label{appendix theorem 1} Let $\overline{\Sigma}$ be the set of smooth hypersurfaces whose mean curvature flow shrinks to a round point. For each $M \in \overline{\Sigma}$, there exists $\delta>0$ such that if $M^*$ is $\delta$-close in $C^2$, then the mean curvature flow $M^*_t$ exists smoothly until it flows to a round point.
  \end{thm}
\begin{proof}
Let $M \in \overline{\Sigma}$. Suppose that this statement is false. Then, there exists no such $\delta>0$ and there exists a sequence $M^i$ converging in $C^2$ to $M$ such that $M^i \notin \overline{\Sigma}$. Note that $M^i$ has uniformly bounded curvature over $i$. Let $M^i_t$ and $M_t$ be the mean curvature flow with initial condition $M^i_0 = M^i$ and $M_0 = M$. Since $M^i$ converges in $C^2$ to $M$, there is a uniform bound on $|A|^2$ for $M^i$, independent of $i$. Thus, there exists a uniform doubling time $T$ such that each mean curvature flow $M^i_t$ and $M_t$ exists smoothly for $t \in [0,T]$. Moreover, by definition of the doubling time, for each $i$ and $t \in [0,T]$, $M^i_t$ satisfies $|A|^2 < C$ for some constant $C$ independent of $i$ and $t$. Let $A^i_t$ be the second fundamental form of $M^i_t$. Then, by standard derivative estimates for mean curvature flow, there exists $C_{\ell}$ depending only on $C$, the dimension $n$, and $\ell$ such that
$$|\nabla^{\ell} A^i_t|^2 \leq C_{\ell} \big(1+\frac{1}{t^{\ell}}\big)$$
 for each $i$ and $t \in (0,T]$. By compactness for the second fundamental form~\cite[Theorem 5.2]{Cooper}, we find that for each positive integers $N, n$, there exists a subsequence of the sequence of flows $M^i_t$ which converges in $C^N$ to a smooth closed mean curvature flow $\widetilde{M}_t$ defined for $t \in [\frac{1}{n}, T]$. By a diagonal subsequence, there exists a subsequence of $M^i_t$ which converges in $C^{\infty}$ to a smooth mean curvature flow $\widetilde{M}_t$ defined for $t \in (0, T]$. We must show that $\widetilde{M}_t = M_t$ for $t \in [0,T]$.  By the uniform curvature estimate $|A|^2 < C$ for $M^i_t$ and the fact that $M^i$ converges to $M$ in $C^2$, we have that $\widetilde{M}_t$ converges to $M$ in $C^0$ as $t \to 0$. 
\medskip

Recall that the level set flow of $M$ is exactly $M_t$~\cite{EvansSpruck91}. Moreover, the level set flow $M_t$ is the maximal set-theoretic subsolutions with initial condition $M$~\cite{Ilmanen93}. Here, a set-theoretic subsolution is a flow of closed sets which satisfy the avoidance principle with respect to every other smooth compact mean curvature flow. If we define $\widetilde{M}_0 = M$, then $\widetilde{M}_t$ is a set-theoretic subsolution for $t \in [0,T]$ since $\widetilde{M}_t$ converges in $C^0$ to $M$ as $t \to 0$ and $\widetilde{M}_t$ satisfies the avoidance principle for $t \in (0,T)$. Thus, $\widetilde{M}_t \subseteq M_t$ for each $t \in [0,T]$. Since $M_t$ is connected and $\widetilde{M}_t$ is a smooth closed mean curvature flow, $\widetilde{M}_t = M_t$ for $t \in [0,T]$. 
\medskip
 
This implies that there is a subsequence of flows $M^i_t$ which converge in $C^{\infty}$ to $M_t$ for $t \in (0,T)$. 
\medskip

Now, we recall that mean curvature flow has continuous dependence in $C^{\infty}$ on initial conditions~\cite[Theorem 1.5.1]{Mantegazza}. If $M \in \overline{\Sigma}$, then as a consequence of the definition, there is a time $T^*$ such that $M_{T^*}$ is strictly convex. By continuous dependence in $C^{\infty}$, there exists $\delta^*>0$ such that if $N$ is $\delta^*$-close in $C^{\infty}$ to the initial condition $M$, then $N_{T^*}$ is strictly convex. By~\cite{Huisken84}, $N$ flows to a round point and so $N \in \overline{\Sigma}$. Applying this fact to $M_{T/2}$, using that $M_{T/2} \in \overline{\Sigma}$ and $M^i_t$ converges in $C^{\infty}$ to $M_t$ for $t \in (0,T)$, there exists $i^*$ such that $M^{i^*}_{T/2} \in \overline{\Sigma}$. This implies that $M^{i^*} \in \overline{\Sigma}$, and this is a contradiction of the assumption that each $M^i \notin \overline{\Sigma}$. This concludes the proof of the lemma.
\end{proof}

The next theorem will refine Theorem \ref{first theorem} and Theorem \ref{appendix theorem 1} in some regards. The following theorem shows particular dependence of the size of the perturbation on geometric characteristics of a convex surface.

\begin{thm}\label{appendix theorem} Let $\Sigma(d,C)$ be the set of closed embedded hypersurfaces $M^n \subset \mathbb{R}^{n+1}$ such that 
\begin{enumerate}
\item $\diam(M) <d$
 \item $|A|^2< C$
 \end{enumerate}
 
 Then there exists an $\epsilon(d,C) > 0$ such that if $M \in \Sigma(d,C)$ and $k_{min} > -\epsilon(d,C)$, then $M$ flows into a sphere under the mean curvature flow.
  \end{thm}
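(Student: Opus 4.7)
The plan is to argue by contradiction via a standard compactness argument. Suppose the theorem fails for some fixed $d,C$. Then there is a sequence $M_i \in \Sigma(d,C)$ with $k_{\min}(M_i) > -1/i$ such that the mean curvature flow $M_{i,t}$ does not shrink to a round point.

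First, after translating so that each $M_i$ meets the origin, the diameter bound traps the sequence in a fixed ball of $\mathbb{R}^{n+1}$, and the bound $|A|^2<C$ gives uniform local $C^{1,1}$ graphical representations of each $M_i$. By Arzela--Ascoli, I extract a subsequence converging in $C^{1,\alpha}$ (for every $\alpha<1$) to a closed embedded hypersurface $M_\infty$ of class $C^{1,1}$. The uniform $L^\infty$ bound on second fundamental forms and the hypothesis $k_{\min}(M_i) > -1/i$ pass to the limit in the weak-$*$ sense, giving $k_{\min}(M_\infty)\geq 0$ almost everywhere, so $M_\infty$ is (weakly) convex.

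Next, I claim that $M_\infty$ shrinks to a round point. Mean curvature flow with a convex $C^{1,1}$ initial datum admits a unique short-time solution (obtained e.g.\ by approximation by smooth convex surfaces), which is smooth for every $t>0$ by parabolic smoothing and instantaneously strictly convex by the strong maximum principle applied to Huisken's evolution equation for $h_{ij}$. Hence Huisken's theorem applies and $M_{\infty,t}$ shrinks smoothly to a round point at some time $T_\infty$. The final step is to transfer convexity back to the sequence. Choose $t_0 \in (0,T_\infty)$ small so that $M_{\infty,t_0}$ is smooth and strictly convex with minimum principal curvature at least some $\kappa_0>0$. Combining pseudolocality (Theorem \ref{pseudolocality}) near the smooth limit flow, Brakke--White regularity (Theorem \ref{Brakke}), and Shi-type derivative estimates, the flows $M_{i,t}$ converge to $M_{\infty,t}$ in $C^2$ on $[t_0/2,t_0]$ as $i\to\infty$. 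For $i$ large, $M_{i,t_0}$ is then strictly convex, so Huisken's theorem applied to $M_{i,t_0}$ gives the desired contradiction.

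The main obstacle is precisely this last step: upgrading $C^{1,\alpha}$ initial convergence, with only a $C^{1,1}$ limit, to $C^2$ convergence of the evolved surfaces at a positive time slice. One has to check carefully that the initial irregularity of $M_\infty$ does not prevent the approximating flows from inheriting its strict convexity after time $t_0$; this is where pseudolocality and Brakke--White regularity do the real work, smoothing both the limit flow and the nearby sequence simultaneously. Because the whole argument passes through compactness, no quantitative control over $\epsilon(d,C)$ is obtained, in sharp contrast to Theorem \ref{first theorem} where the constants are in principle explicit, which is the reason for relegating this result to the appendix.
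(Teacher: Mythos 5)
Your proposal follows the same compactness--contradiction template as the paper, but it diverges at exactly the point the paper flags as the crux of the argument, and the step you wave at with pseudolocality and Brakke regularity is where a genuine gap sits. The paper explicitly notes that the $C^{1,\beta}$ convergence of the initial data ``is not a strong enough convergence to use our continuity on initial conditions to conclude directly that the flows converge.'' Your plan is essentially to do exactly this: build a $C^{1,1}$ convex limit $M_\infty$, flow it, and then assert that $M_{i,t} \to M_{\infty,t}$ in $C^2$ at a positive time. But nothing in your proposal establishes that the almost-convexity $k_{\min}(M_i) > -1/i$ persists quantitatively along the flow; and without this, you cannot conclude that whatever smooth limit flow you extract from the $M_{i,t}$ is (even weakly) convex at any positive time. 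Pseudolocality and Brakke--White give curvature bounds and smooth subconvergence of the flows for $t \in [t_0/2, T]$---but that much already follows from the evolution equation for $|A|^2$ together with Shi's estimates, and it is not what is missing. What is missing is a mechanism to control the \emph{sign} of the smallest principal curvature under the flow, and neither pseudolocality nor Brakke regularity speaks to that. Identifying the extracted limit flow with ``the'' flow of a $C^{1,1}$ convex $M_\infty$ would require a uniqueness/stability theory for MCF with $C^{1,1}$ initial data that you neither cite nor prove, and that the paper deliberately avoids.

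The paper's fix is precisely the ingredient your proposal lacks: after obtaining a uniform curvature bound $|A|^2 < \overline{C}$ on $[0,T]$ for all the flows (from the evolution equation), it runs Hamilton's tensor maximum principle argument on the shape operator $S = \{h_i^j\}$, whose evolution $\partial_t S = \Delta S + |A|^2 S$ preserves the closed convex cone $X$ of positive semidefinite matrices, and more importantly gives the quantitative bound $s_n(t) \le s_n(0)\, e^{\overline{C} t}$ on the distance $s_n$ of $S_n$ to $X$. This is what propagates $k_{\min}^n(0) \to 0$ to $k_{\min}^n(t_0) \to 0$ at a \emph{positive} time $t_0$, where one now has uniform $C^\infty$ bounds via interior estimates. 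Only then does the paper extract a smooth subsequential limit $L$ (no low-regularity flow theory needed), which is weakly convex by construction, apply Hamilton's strong maximum principle to get $k_{\min} > 0$ on $L(\delta)$ for any $\delta > 0$, and transfer strict convexity back to $M_n(t_1)$ for $n$ large by smooth convergence. To repair your argument you should replace ``flow the $C^{1,1}$ limit and match up'' with this tensor maximum principle step; as written, your argument asserts the conclusion of that step without supplying it.
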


One can proceed following more or less as Petersen and Tao do in their note on the Ricci flow of nearly quarter-pinched manifolds~\cite{PetersenTao09}. Assume to the contrary that there is no such $\epsilon$. Take a sequence $\{M_n\}_{n = 1}^\infty \subset \Sigma(d,C)$ of hypersurfaces such that for each $n$, $k_{min} > -1/n$ yet none of the $M_n$ flow to spheres under the normalized mean curvature flow. Because the mean curvature flow is invariant under translation and by the diameter bound, all manifolds can be considered to be contained in $B_d(0)$ without loss of generality. 
$\medskip$

From our uniform curvature bounds, using that $B_d(0)$ is compact, we get that there is a cover $\{U_i\}$ of $B_d(0)$ so that each of the $M_n$ is given as a union of graphs with uniform $C^2$ bounds. Using Arzela-Ascoli we then attain a $C^{1,\beta}$-converging subsequence of graphs. Relabeling them, consider the sequence $M_n\to N$, a set in $\mathbb{R}^{n+1}$ locally given by $C^{1, \beta}$ graphs, so that $N$ is an immersed $C^{1, \beta}$ manifold. This is not a strong enough convergence to use our continuity on initial conditions to conclude directly that the flows converge though. 
$\medskip$

Our plan then is to use the flow to get uniform bounds not only on $|A|^2$ but also uniform bounds on all its derivatives. Then we could attain a smoothly convergent subsequence and so their flows converge. The problem is that we would want this new sequence to be ``offending" in that $k_{min} \to 0$ but all the $M_n$ do not flow to spheres; the second condition is clearly invariant under the flow but the first is not necessarily. First we need a time $t > 0$ when all the flows exist with bounded curvature in order to apply interior estimates. This is a simple consequence of the evolution equations:

\begin{lem}
There is a time $T > 0$ so that the flows of all $M_n$ through time $T$ have $|A_n(t)| < \overline{C}$, where $A_n$ denotes the 2nd fundamental form of the $n^{th}$ surface in the sequence above. 
\end{lem}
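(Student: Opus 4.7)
The plan is to exploit the standard evolution equation for $|A|^2$ under mean curvature flow, together with the parabolic maximum principle, to derive a differential inequality for the spatial maximum of $|A|^2$, and then compare with the corresponding ODE.

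Concretely, I would begin by recalling the evolution equation
\begin{equation*}
\partial_t |A|^2 \;=\; \Delta |A|^2 - 2|\nabla A|^2 + 2|A|^4 \;\le\; \Delta |A|^2 + 2|A|^4.
\end{equation*}
By short-time existence for smooth closed hypersurfaces, each $M_n$ admits a smooth mean curvature flow on some maximal interval $[0, T_n)$. Setting $f_n(t) := \max_{M_n^t} |A_n|^2$, the parabolic maximum principle (the Laplacian term has the favorable sign at a spatial max) gives
\begin{equation*}
f_n'(t) \;\le\; 2 f_n(t)^2
\end{equation*}
in the viscosity sense, while $f_n(0) < C$ uniformly in $n$ by hypothesis.

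Next I would compare with the ODE $g'(t) = 2 g(t)^2$, $g(0) = C$, whose solution $g(t) = C/(1 - 2Ct)$ blows up only at $t = 1/(2C)$. Choosing $T := 1/(4C)$ gives $f_n(t) \le 2C$ on $[0, T]$, uniformly in $n$. Since $|A_n|^2$ remains uniformly bounded on $[0, T) \cap [0, T_n)$, the standard continuation criterion for mean curvature flow (smoothness persists as long as $|A|$ does not blow up) implies $T_n > T$, so each flow $M_n^t$ exists smoothly on the entire interval $[0, T]$ with $|A_n(t)| \le \overline{C} := \sqrt{2C}$.

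There is not really any obstacle: the argument is a routine application of the evolution equation for $|A|^2$ combined with ODE comparison, which is precisely why the authors flag it as ``a simple consequence of the evolution equations.'' The only bookkeeping point is ensuring that the uniform curvature bound on the half-open interval $[0, T)$ upgrades to smooth existence on the closed interval $[0, T]$, which is handled cleanly by the continuation criterion.
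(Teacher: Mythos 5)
Your proof is correct and takes exactly the route the paper intends when it calls the lemma ``a simple consequence of the evolution equations'': the evolution equation for $|A|^2$, the parabolic maximum principle to reduce to the ODE $g' = 2g^2$, and ODE comparison together with the continuation criterion to get a uniform existence time $T = 1/(4C)$ with $|A_n| \le \sqrt{2C}$ there. The paper gives no further details, and none are needed beyond what you supply.
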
 

Fixing $t_0 \in (0,T]$ we have that the sequence $M_n(t_0)$ is a collection of smooth manifolds, and from the usual interior estimates will have uniform bounds on $|\nabla^\ell A|$ in terms of the uniform bound $\overline{C}$, valid for $t > t_0$. We want to find out if the sequence is offending now. 
$\medskip$

The principal curvatures of a hypersurface $M$ are eigenvalues of its shape operator $S= \{h_i^j\}$, so we must study what happens to it under the flow. Inspired by \cite{PetersenTao09}, we adopt ideas from \cite{Hamilton86} concerning the proof of the tensor maximum principle of Hamilton therein.
$\medskip$

To do this, let $M^n \subset \mathbb{R}^{n+1}$ below stand for a compact hypersurface with shape operator $S$. The eigenvalues of $h_i^j$ are the principal curvatures, so $h_i^j$ is a positive semidefinite matrix if and only if $M$ is a convex hypersurface. The evolution equation of the shape operator $h_i^j$ under the flow is given by:
\begin{center}
$\frac{\partial h_i^j}{\partial t} = \Delta h_i^j + |A|^2 h_i^j$
\end{center}
Which we write in compacted notation as
\begin{center}
$\frac{\partial S}{\partial t} = \Delta S+ |A|^2 S$
\end{center}

In the following we closely follow \cite{Hamilton86} section 3; here for notational ease we only concern ourselves with matrices. We will work in just a coordinate patch of $M$, but this generalizes easily to tensors fields over manifolds, as in section 4 of \cite{Hamilton86}. 
$\medskip$

First, setting notation, let $X$ denote the set of positive definite matrices. We see that it is convex in each fiber. We define the tangent cone $T_f X$ at a point $f \in \partial X$ as the smallest closed convex cone with vertex at $f$ which contains $X$. It is the intersection of all the closed half-spaces containing $X$ with $f$ on the boundary of the half space. 
\begin{lem}  The solutions of $\frac{df}{dt} = |A|^2 f$ which start in the closed convex set $X$ will remain in $X$ if and only if $ |A|^2 f \in T_f X$ for all $f \in \partial X$. \end{lem}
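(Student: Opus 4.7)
This is the ODE analogue of Hamilton's tensor maximum principle, and is essentially the Nagumo--Brezis invariance criterion for a closed convex set under an autonomous ODE. Writing $G(f) := |A|^2 f$, in the shape-operator framing $|A|^2 = \operatorname{tr}(f^T f)$, so $G$ is polynomial in the entries of $f$, hence smooth, and in particular locally Lipschitz; I would treat the lemma at this abstract ODE level.

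The necessity direction is essentially the definition of the tangent cone. If the flow preserves $X$ and $f_0 \in \partial X$, let $f(t)$ be the solution with $f(0)=f_0$. Since $f(t)\in X$ for all $t\geq 0$, each difference quotient $(f(t)-f_0)/t$ lies in $(1/t)(X-f_0)\subset T_{f_0}X$, and letting $t\to 0^+$ together with closedness of $T_{f_0}X$ gives $G(f_0)=f'(0)\in T_{f_0}X$.

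For the sufficiency direction I would run a Gronwall argument on the squared distance $\psi(t) := \tfrac12\,\dist(f(t),X)^2$. Because $X$ is closed convex, the metric projection $\pi_X$ is well defined and $1$-Lipschitz, the function $\tfrac12\,\dist(\,\cdot\,,X)^2$ is $C^{1,1}$ with gradient $f-\pi_X(f)$, and the orbit $f(t)$ is smooth, so $\psi$ is locally absolutely continuous with
\begin{equation*}
\psi'(t)=\langle f(t)-\pi_X(f(t)),\,G(f(t))\rangle\qquad\text{for a.e. } t.
\end{equation*}
I would then split $G(f(t))=G(\pi_X(f(t)))+\bigl[G(f(t))-G(\pi_X(f(t)))\bigr]$. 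The first inner product is nonpositive: the vector $f(t)-\pi_X(f(t))$ lies in the outward normal cone $N_{\pi_X(f(t))}X$, while the hypothesis places $G(\pi_X(f(t)))$ in the tangent cone $T_{\pi_X(f(t))}X$, and for a closed convex set these two cones are mutually polar. The second term is bounded via Cauchy--Schwarz and the local Lipschitz constant $L$ of $G$ on a ball containing the orbit by $L\,|f(t)-\pi_X(f(t))|^2 = 2L\psi(t)$. Thus $\psi'\leq 2L\psi$ a.e., and since $\psi(0)=0$ Gronwall forces $\psi\equiv 0$, so $f(t)\in X$ for all $t$.

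The only subtlety worth flagging is the regularity of the distance function: $\tfrac12\,\dist(\cdot,X)^2$ is merely $C^{1,1}$, not $C^2$, so the formula for $\psi'$ holds only almost everywhere. This is precisely the regularity required to apply Gronwall, so nothing beyond the standard convex-analytic theory of projections is needed; I view this more as the main analytic step to execute carefully than as a genuine obstacle.
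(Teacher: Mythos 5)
Your proof is correct, and it follows a different route from the paper's, which transcribes Hamilton's ODE lemma from \cite{H4}. There the distance is written as $s(f)=\sup\{\ell(f-k)\}$ over support functionals $\ell\in S_kX$ and boundary points $k$, and Hamilton's Lemma 3.5 on differentiating such a supremum is invoked to get $\tfrac{d}{dt}s(f)\leq C_M s(f)$; the necessity direction is argued contrapositively by showing that if some support functional $\ell\in S_fX$ has $\ell(|A|^2f)>0$ then $\ell(f)$ strictly increases. You replace this with the Nagumo--Brezis invariance viewpoint: necessity via difference quotients $(f(t)-f_0)/t\in\tfrac1t(X-f_0)\subset T_{f_0}X$, and sufficiency by running Gronwall on the \emph{squared} distance $\psi=\tfrac12\dist(f,X)^2$, exploiting that this is $C^{1,1}$ with gradient $f-\pi_X(f)$ and decomposing $G(f)=G(\pi_X(f))+[G(f)-G(\pi_X(f))]$, with the first term killed by polarity of the normal and tangent cones and the second controlled by a local Lipschitz bound. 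This buys you a self-contained argument that avoids citing Hamilton's sup-derivative lemma and sidesteps the nonsmoothness of $s(f)$ that the paper handles with its $\sup$ gymnastics; the paper's version has the virtue of being literally Hamilton's argument, which the reader can match line-for-line against \cite{H4}. Two small points worth noting if you wanted to make yours airtight: (i) since $\psi=g\circ f$ with $g\in C^{1,1}$ and $f$ smooth, $\psi$ is actually $C^1$ and the chain rule holds pointwise, so the ``a.e.'' caveat you flag is not needed; (ii) both you and the paper quietly need an a priori compactness/continuation step to justify a uniform Lipschitz (or curvature) bound along the orbit --- the paper says ``WLOG $X$ is compact,'' you would localize to a ball containing the orbit on a short time interval and iterate --- the gap is of the same size in either treatment.
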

\begin{proof}
We say that a linear function $\ell$ on $\mathbb{R}^{n^2}$ is a support function for $X$ at $f \in \partial X$ and write $\ell \in S_fX$ if $|\ell| = 1$ and $\ell(f) \geq \ell(k)$ for all other $k \in X$. Then $|A|^2 f \in T_f X$ if and only if $\ell(|A|^2 f) \leq 0$ for all $\ell \in S_f X$. Suppose $\ell(|A|^2 f)> 0$ for some $\ell \in S_f X$. Then
\begin{center}
$\frac{d}{dt} \ell(f) = \ell(\frac{df}{dt}) = \ell(|A|^2 f) > 0$
\end{center}
so $\ell(f)$ is increasing and $f$ cannot remain in $X$. To see the converse, note as in \cite{Hamilton86} that without loss of generality $X$ is compact. Let $s(f)$ be the distance from $f$ to $X$, with $s(f) = 0$ if $f \in X$. Then
\begin{center}
$s(f) =$ sup$\{\ell(f - k)\}$
\end{center}
where the sup is over all $k \in \partial X$ and all $\ell \in S_f X$. This defines a compact subset $Y$ of $\mathbb{R}^{n^2} \times \mathbb{R}^{n^2}$. By Lemma 3.5 in \cite{Hamilton86},
\begin{center}
$\frac{d}{dt}s(f) \leq$ sup$\{\ell(\phi(f))\}$ 
\end{center}
where the sup is over all pairs $(k, \ell)$ with $k \in \partial X$, $\ell \in S_k X$, and
\begin{center}
$s(f) = \ell(f - k)$
\end{center}
Note this can happen only when $k$ is the unique closest point in $X$ (using $X$ is closed and convex) and $\ell$ is the linear function of length 1 with gradient in the direction $f - k$. Now since $M$ is compact we assume that $|A|^2$ is bounded by a constant $C_M$ so we have that 
\begin{center}
$| |A|^2f - |A|^2 k| \leq C_M|f - k|$ 
\end{center}
Since $\ell(|A|^2k) \leq 0$ by hypothesis and $|f - k| = s(f)$ we have:
\begin{center} 
$\frac{d}{dt} s(f) \leq \frac{d}{dt} \ell(f - k) = \ell( |A|^2 f) \leq  \ell( |A|^2 f) - \ell(|A|^2 k) =  \ell( |A|^2 (f - k)) \leq Cs(f)$
\end{center}
Hence $\frac{d}{dt} s(f) \leq  C_Ms(f)$. Since $s(f) = 0$ to start, it must remain 0. 
\end{proof}
Using the setting from the proof above, assume without loss of generality that $X$ is compact and keep the notation that $s(f)$ is the distance of $f \in \mathbb{R}^{n^2}$ from $X$ and let 
\begin{center}
$s(t) = \sup\limits_x s(f(x,t)) = \sup{\ell(f(x,t) - k)}$
\end{center}
where the latter sup is over all $x \in M$, all $k \in \partial X$, and all $\ell \in S_k X$. Since this set and $M$ are compact, we can use Lemma 3.5 from \cite{Hamilton86} again to see that
\begin{center}
$\frac{d}{dt} s(t) \leq \sup \frac{d}{dt} \ell(f(x,t) - k)$
\end{center}
where the sup is over all $x,k,\ell$ as above with $\ell(f(x,t) - k ) = s(t)$. Then let $x $ be some point in $M$ where $f(x,t)$ is furthest away from $X$, $k$ is the unique closest point in $X$ to $f(x,t)$, and $\ell$ is the linear function of length 1 with gradient in the direction from $k$ to $f(x,t)$. Now
\begin{center}
$\frac{d}{dt} \ell(f(x,t) - k) = \ell(\Delta f) + \ell(|A|^2 f)$ 
\end{center}
Since $\ell(f(x,t))$ has its maximum at $x$, the term $\ell( \Delta f) = \Delta \ell(d) \leq 0$. Now we note that if $k \in X$, i.e.\ is positive semidefinite then $|A|^2k$ is too; hence from the lemma $\ell(|A|^2 k) \leq 0$. Now, suppose that $|A|^2 < C$ for all $t \in [0, T]$, then we would have from:
\begin{center}
$s(t) = \ell(|A|^2 f) \leq ||A|^2f - |A|^2 k| \leq C|f - k| = Cs(t)$
\end{center}
that for any time $t_0 \in [0, T]$ that $s(t) \leq s(0)e^{Ct}$. 
$\medskip$

Denoting by $S_n$ the shape tensor for $M_n$, we can apply the above reasoning (again, with some easy modifications/generalizations as discussed in section 4 of \cite{Hamilton86}) to see that since $k_{min}^n \to 0$, we have that $s_n(0) \to 0$ and in the same manner, if for $t_0$ (as above) $s_n(t_0) \to 0$, then $k_{min}^n(t_0) \to 0$ as well. 
$\medskip$

Recall that an offending sequence $M_n$ is one where $k_{\text{min}}$ goes to zero, but not all flows shrink smoothly to round spheres. Now for each of the $M_n$ recall we have $|A_n|^2 \leq \overline{C}$ for a universal constant $\overline{C}$ that works for any $t \in [0,T]$, so that $s_n(t_0) \leq s_n(0) e^{\overline{C}t_0}$ for all $n$. Since $s_n(0) \to 0$, we must indeed also have $s_n(t_0) \to 0$, so that the sequence $M_n(t_0)$ is still offending. 
$\medskip$

From the interior estimates then, again using a standard Arzela-Ascoli argument, we can extract a subsequence $M_\ell$ of $M_n$ that converges smoothly to an immersed manifold $L$ with positive semidefinite shape operator. By Hamilton's strong maximum principle since $L$ is compact and $k_{min} \geq 0$, for any time $t > 0$ for which the flow is defined we have $k_{min} > 0$, strictly. So pick a time $t < t_1 < T$, and set $\delta = t_1 - t$. Then $M_\ell(t_1) \to L(\delta)$ smoothly by continuous dependence and $k_{min}^L(\delta) = z > 0$. 
$\medskip$

Since $M_n(t_1) \to L(\delta)$ smoothly, $k_{min}^n(t_1) \to k_{min}^L(\delta)$, so that for large $n$ $k_{min}^n(t_1) > z/2$. Hence by Huisken's theorem, these will all proceed to shrink down to spheres, contradicting our assumption for $M_n$.

\end{appendix}

\bibliographystyle{amsplain}
\bibliography{bibliography}

\providecommand{\bysame}{\leavevmode\hbox to3em{\hrulefill}\thinspace}
\providecommand{\MR}{\relax\ifhmode\unskip\space\fi MR }
% \MRhref is called by the amsart/book/proc definition of \MR.
\providecommand{\MRhref}[2]{%
  \href{http://www.ams.org/mathscinet-getitem?mr=#1}{#2}
}
\providecommand{\href}[2]{#2}
\begin{thebibliography}{10}

\bibitem{Altschuler1995}
S.~Altschuler, S.~B. Angenent, and Y.~Giga, \emph{Mean curvature flow through
  singularities for surfaces of rotation}, J. Geom. Anal. \textbf{5} (1995),
  no.~3, 293--358.

\bibitem{AndrewsBaker10}
B.~Andrews and C.~Baker, \emph{Mean curvature flow of pinched submanifolds to
  spheres}, J. Differ. Geom \textbf{85} (2010), no.~3, 357--395.

\bibitem{Ang88}
S.~Angenent, \emph{{The zero set of a solution of a parabolic equation.}}, J.
  Reine Angew. Math. \textbf{390} (1988), 79--96.

\bibitem{Angenent92}
S.~B. Angenent, \emph{{Shrinking doughnuts.}}, {Nonlinear diffusion equations
  and their equilibrium states, 3. Proceedings from the third conference, held
  August 20-29, 1989 in Gregynog, Wales, United Kingdom}, Boston, MA etc.:
  Birkh\"auser, 1992, pp.~21--38.

\bibitem{BamlerMaximo17}
R.~Bamler and D.~Maximo, \emph{Almost-rigidity and the extinction time of
  positively curved {R}icci flows}, {Math. Ann.} \textbf{369} (2017), no.~1,
  899--911.

\bibitem{BernsteinWang17}
J.~Bernstein and L.~Wang, \emph{A topological property of asymptotically
  conical self-shrinkers of small entropy}, Duke Math. J. \textbf{166} (2017),
  no.~3, 403--435.

\bibitem{BernsteinWangHausdorffstability18}
\bysame, \emph{Hausdorff stability of the round two-sphere under small
  perturbations of the entropy}, Math. Res. Lett. \textbf{25} (2018), no.~2,
  347--365.

\bibitem{BLT1}
T.~Bourni, M.~Langford, and G~Tinaglia, \emph{Collapsing ancient solutions of
  mean curvature flow}, to appear.

\bibitem{Brakke1978}
K.~Brakke, \emph{The motion of a surface by its mean curvature}, Princeton
  University Press, 1978.

\bibitem{BrendleHuisken16}
S.~Brendle and G.~Huisken, \emph{Mean curvature flow with surgery of mean
  convex surfaces in $\mathbb{R}^3$}, Invent. Math. \textbf{203} (2016), no.~2,
  615--654.

\bibitem{BuzanoHaslhoferHersh16}
R.~Buzano, R.~Haslhofer, and O.~Hershkovits, \emph{The moduli space of
  two-convex embedded spheres}, J. Differ. Geom. (2019), 1--29.

\bibitem{Chen2007}
B.-L. Chen and L.~Yin, \emph{Uniqueness and pseudolocality theorems of the mean
  curvature flow}, Comm. Anal. Geom. \textbf{15} (2007), no.~3, 435--490.

\bibitem{ColdingMinicozziGeneric}
T.~H. Colding and W.~Minicozzi, II, \emph{Generic mean curvature flow {I}:
  generic singularities}, Ann. Math. \textbf{175} (2012), no.~2, 755--833.
  \MR{2993752}

\bibitem{Cooper}
A.~A. Cooper, \emph{A compactness theorem for the second fundamental form.},
  arXiv:1006.5697v4, 2010.

\bibitem{Ecker01}
K.~Ecker, \emph{A local monotonicity formula for mean curvature flow}, Ann.
  Math. \textbf{154} (2001), no.~2, 503--525. \MR{1865979}

\bibitem{EckerHuisken89}
K.~Ecker and G.~Huisken, \emph{Mean curvature evolution of entire graphs}, Ann.
  Math. \textbf{130} (1989), no.~3, 453--471.

\bibitem{EckerHuisken91}
\bysame, \emph{{Interior estimates for hypersurfaces moving by mean
  curvature.}}, {Invent. Math.} \textbf{105} (1991), no.~3, 547--569.

\bibitem{EvansSpruck91}
L.~C. Evans and J.~Spruck, \emph{Motion of level sets by mean curvature. {I}},
  J. Differ. Geom. \textbf{33} (1991), no.~3, 635--681.

\bibitem{Hamilton86}
R.~Hamilton, \emph{Four-manifolds with positive curvature operator}, J. Differ.
  Geom. \textbf{24} (1986), no.~2, 153--179.

\bibitem{Huisken84}
G.~Huisken, \emph{Flow by mean curvature of convex surfaces into spheres}, J.
  Differ. Geom. \textbf{20} (1984), no.~1, 237--266.

\bibitem{Huisken90}
\bysame, \emph{Asymptotic behavior for singularities of the mean curvature
  flow}, J. Differ. Geom. \textbf{31} (1990), no.~1, 285--299.

\bibitem{Ilmanen93}
T.~Ilmanen, \emph{The level-set flow on a manifold}, Differential geometry:
  partial differential equations on manifolds (Los Angeles, CA, 1990)
  \textbf{54} (1993), 193--204.

\bibitem{INS}
T.~Ilmanen, A.~Neves, and F.~Schulze, \emph{On short time existence for the
  planar network flow}, {arXiv:1407.4756}.

\bibitem{lauer2013newlength}
J.~Lauer, \emph{A new length estimate for curve shortening flow and low
  regularity initial data}, Geom. Funct. Anal. \textbf{23} (2013), no.~6,
  1934--1961.

\bibitem{LeiXu}
L.~Lei and H.~Xu, \emph{A new version of {H}uisken's convergence theorem for
  mean curvature flow in spheres}, {arXiv:1505.07217}, 2015.

\bibitem{LinStarShaped20}
L.~Lin, \emph{Mean curvature flow of star-shaped hypersurfaces}, Comm. Anal.
  Geom. \textbf{28} (2020), no.~6, 1315--1336.

\bibitem{LiuXuYeZhao18}
K.~Liu, H.~Xu, F.~Ye, and E.~Zhao, \emph{The extension and convergence of mean
  curvature flow in higher codimension}, Trans. Amer. Math. Soc. \textbf{370}
  (2018), no.~3, 2231--2262.

\bibitem{Mantegazza}
C.~Mantegazza, \emph{Lecture notes on mean curvature flow}, Progress in
  Mathematics, vol. 290, Birkh\"auser/Springer Basel AG, Basel, 2011.

\bibitem{Mramor}
A.~Mramor, \emph{Regularity and stability results for the level set flow via
  the mean curvature flow with surgery}, Comm. Anal. Geom. (2019), to appear.

\bibitem{MramorPayne}
A.~Mramor and A.~Payne, \emph{Ancient and eternal solutions to mean curvature
  flow from minimal surfaces}, {Math. Ann.} (2021), to appear.

\bibitem{PetersenTao09}
P.~Petersen and T.~Tao, \emph{Classification of almost quarter-pinched
  manifolds}, Proceedings of the American Mathematical Society (2009),
  2437--2440.

\bibitem{SormaniHowManifoldsConverge12}
C.~Sormani, \emph{How {R}iemannian manifolds converge}, Metric and
  {D}ifferential {G}eometry, Springer, 2012, pp.~91--117.

\bibitem{Topping98}
P.~Topping, \emph{Mean curvature flow and geometric inequalities}, J. Reine
  Angew. Math. \textbf{503} (1998), 47--61.

\bibitem{shengwenwang2020round}
S.~Wang, \emph{Round spheres are {H}ausdorff stable under small perturbation of
  entropy}, J. Reine Angew. Math. \textbf{2020} (2020), no.~758, 261--280.

\bibitem{Wa11}
X.~J. Wang, \emph{Convex solutions to the mean curvature flow}, Ann. Math.
  \textbf{173} (2011), no.~3, 1185--1239.

\bibitem{Wh03}
B.~White, \emph{The nature of singularities in mean curvature flow of
  mean-convex sets}, J. Amer. Math. Soc. \textbf{16} (2003), no.~1, 123--138.

\bibitem{White05}
\bysame, \emph{A local regularity theorem for mean curvature flow}, Ann. Math.
  \textbf{161} (2005), no.~3, 1487--1519.

\end{thebibliography}

\end{document}